\newtheorem{theorem}{Theorem}
\newtheorem{definition}[theorem]{Definition}
\newtheorem{proposition}[theorem]{Proposition}
\newtheorem{corollary}[theorem]{Corollary}
\newtheorem{lemma}[theorem]{Lemma}
\theoremstyle{remark}
\newtheorem*{remark}{Remark}
\DeclareMathOperator{\GL}{GL}
\DeclareMathOperator{\SL}{SL}
\DeclareMathOperator{\Adj}{Ad}
\DeclareMathOperator{\adj}{ad}
\DeclareMathOperator{\Hom}{Hom}
\DeclareMathOperator{\Sym}{Sym}
	\newcommand{\note}[1]{{\textcolor{red}{$\langle$#1$\rangle$}}} %Notes appear red
	\newcommand{\note}[1]{}
\title{Hessenberg Varieties for the Minimal Nilpotent Orbit}
\author{Hiraku Abe}
\address{Osaka City University Advanced Mathematical Institute, 3-3-138 Sugimoto, Sumiyoshi-ku, Osaka 558-8585, 
JAPAN / Department of Mathematics, University of Toronto, 40 St. George Street, Toronto, Ontario, Canada, M5S 2E4}
\email{~~~abeh@math.mcmaster.ca}
\author{Peter Crooks}
\address{Department of Mathematics, University of Toronto, 40 St. George Street, Toronto, Ontario, Canada, M5S 2E4}
\email{~~~peter.crooks@utoronto.ca}
\begin{document}
\begin{abstract}
For a connected, simply-connected complex simple algebraic group $G$, we examine a class of Hessenberg varieties associated with the minimal nilpotent orbit. In particular, we compute the Poincar\'{e} polynomials and irreducible components of these varieties in Lie type $A$. Furthermore, we show these Hessenberg varieties to be GKM with respect to the action of a maximal torus $T\subseteq G$. The corresponding GKM graphs are then explicitly determined. Finally, we present the ordinary and $T$-equivariant cohomology rings of our varieties as quotients of those of the flag variety.   
\end{abstract}

\subjclass[2010]{17B08 (primary); 55N91(secondary)}

\maketitle

\section{Introduction}\label{Section: Introduction}
\subsection{Context and Statement of Results}
Hessenberg varieties form a large and interesting family of subvarieties of the flag variety, including Springer fibres, the Peterson variety, the toric variety associated to Weyl chambers, and the flag variety itself. They are studied in the contexts of algebraic geometry \cite{Brosnan, DeMari, Insko, Klyachko, Precup, Procesi, Tymoczko-Decomposing}, combinatorics \cite{Drellich, Harada2, Guay-Paquet,Shareshian}, geometric representation theory \cite{Fung, Springer}, and equivariant algebraic topology. Concerning the last of these areas, there has been a pronounced emphasis on equivariant cohomology computations for torus actions on Hessenberg varieties (see \cite{Abe, Fukukawa, Harada1, Horiguchi}).

This manuscript studies a class of Hessenberg varieties arising from the minimal nilpotent orbit. More precisely, let $G$ be a connected, simply-connected simple algebraic group over $\mathbb{C}$ with Lie algebra $\mathfrak{g}$, opposite Borel subgroups $B,B_{-}\subseteq G$, maximal torus $T=B\cap B_{-}$, Weyl group $W=N_G(T)/T$, and highest root $\theta$. Each highest root vector $e_{\theta}\in\mathfrak{g}_{\theta}\setminus\{0\}$ belongs to the minimal nilpotent orbit of $G$. Accordingly, for a Hessenberg subspace $H\subseteq\mathfrak{g}$, we consider the Hessenberg variety $X_H(e_{\theta})\subseteq G/B$. This variety has received some attention in the literature as an example of a \textit{highest weight Hessenberg variety} (see \cite{Tymoczko-Dimensions}).  

As is the case with nilpotent Hessenberg varieties in general, $X_H(e_{\theta})$ is sometimes singular and reducible, and its geometry depends heavily on the choice of $H$. However, one distinguishing feature is that $X_H(e_{\theta})$ is a union of Schubert varieties. In particular, it is invariant under the action of $T$ on $G/B$. 

While we present a wide array of results on the geometry and topology of $X_H(e_{\theta})$, the following are our main results.

\begin{itemize}
\item There are explicit combinatorial procedures for determining the Poincar\'{e} polynomial and irreducible components of $X_H(e_{\theta})$ in Lie type $A$.  	

\item The $T$-action renders $X_H(e_{\theta})$ a GKM variety. Its GKM graph is the full subgraph of the GKM graph of $G/B$ with vertex set $\{w\in W:\mathfrak{g}_{w^{-1}\theta}\subseteq H\}$.

\item The restriction map $i_T^*:H_T^*(G/B;\mathbb{Q})\rightarrow H_T^*(X_H(e_{\theta});\mathbb{Q})$ is surjective. Its kernel is the $H_T^*(\text{pt};\mathbb{Q})$-submodule of $H_T^*(G/B;\mathbb{Q})$ freely generated by the ``equivariant opposite Schubert classes'' $\sigma_T(w)$ for all $w\in W$ with $\mathfrak{g}_{w^{-1}\theta}\cap H=\{0\}$.\footnote{Here, $\sigma_T(w)\in H_T^{2\ell(w)}(G/B;\mathbb{Q})$ is the class determined by the opposite Schubert variety $\overline{B_{-}wB/B}$.}
\end{itemize}

We also prove a similar statement for the ordinary cohomology ring $H^*(X_H(e_{\theta});\mathbb{Q})$.

\subsection{Structure of the Article}
We begin with \ref{Section: Notation}, which enumerates some of the important objects used throughout the article. Section \ref{Section: Minimal Nilpotent Hessenberg Variety} then properly introduces $X_H(e_{\theta})$. In \ref{Section: Examples in Type A}, we use a common description of Hessenberg varieties in type $A$ to provide an explicit example of $X_H(e_{\theta})$.

Section \ref{Section: Minimal Nilpotent Hessenberg Varieties in Equivariant Geometry} seeks to introduce $X_H(e_{\theta})$ through the lens of equivariant geometry. Specifically, \ref{Section: Algebraic Group Actions} shows $X_H(e_{\theta})$ to be $T$-invariant, and it gives a description of the $T$-fixed point set $X_H(e_{\theta})^T$. Using this description, \ref{Section: Euler Number} computes $\vert X_H(e_{\theta})^T\vert$, the Euler number of $X_H(e_{\theta})$. Also, \ref{Section: The Size} uses properties of $X_H(e_{\theta})^T$ to give an upper bound on the codimension of $X_H(e_{\theta})$ in $G/B$. 

Section \ref{Section: Poincare Polynomials and Irreducible Components} exploits combinatorial descriptions of Hessenberg varieties in type $A$ to investigate the geometry of $X_H(e_{\theta})$. Specifically, \ref{Section: Poincare Polynomials in Type A} computes the Poincar\'{e} polynomial of $X_H(e_{\theta})$ by means of the Hessenberg stair shape diagram. Next, beginning with some partial results in type $ADE$, \ref{Subsection: Irreducible Components in Type ADE} and \ref{Subsection: Complete Description of the Irreducible Components in Type $A$} introduce the modified Hessenberg stair shape to completely describe the irreducible components of $X_H(e_{\theta})$ in type $A$.  

Section \ref{Section: GKM Theory on Minimal Nilpotent Hessenberg Varieties} studies $X_H(e_{\theta})$ via GKM theory. In \ref{Section: Brief Review of GKM Theory} and \ref{Section: The GKM Graph of G/B}, we review the relevant parts of this theory, as well as how they apply to the flag variety. Section \ref{Section: The GKM Graph} then shows the GKM graph of $X_H(e_{\theta})$ to be a full subgraph of the GKM graph of $G/B$. In \ref{Section: GKM Graphs in Type A}, we explain how one would implement this result to draw the GKM graph of $X_H(e_{\theta})$ in type $A$. We then provide the GKM graphs of all five such Hessenberg varieties in type $A_2$.   

Section \ref{Section: Cohomology Ring Presentations} is devoted to the calculation of $H^*(X_H(e_{\theta});\mathbb{Q})$ and $H_T^*(X_H(e_{\theta});\mathbb{Q})$. Specifically, the restriction maps $H^*(G/B;\mathbb{Q})\rightarrow H^*(X_H(e_{\theta});\mathbb{Q})$ and $H_T^*(G/B;\mathbb{Q})\rightarrow H_T^*(X_H(e_{\theta});\mathbb{Q})$ are shown to be surjective with kernels generated by certain opposite Schubert classes.

\vspace{10pt}

\noindent\emph{\textbf{Acknowledgements.}}  We gratefully acknowledge Megumi Harada and Julianna Tymoczko for their having organized the Workshop on Recent Developments in the Geometry and Combinatorics of Hessenberg Varieties at the Fields Institute in July 2015. We also wish to thank the conference participants, particularly Julianna Tymoczko and Martha Precup, for enlightening conversations. Additionally, the first author thanks Yoshihiro Ohnita and Mikiya Masuda for enhancing his research projects, and the second author is grateful to Lisa Jeffrey and John Scherk for the same reason.  The first author was supported by the JSPS Program for Advancing Strategic International Networks to Accelerate 
the Circulation of Talented Researchers:
``Mathematical Science of Symmetry, Topology and Moduli,
Evolution of International Research Network based on OCAMI''. He is also supported by a JSPS Grant-in-Aid for Young Scientists (B): 15K17544. The second author was supported by an OGS Queen Elizabeth II scholarship during the preparation of this manuscript.

\section{Background}\label{Section: Background}
\subsection{Notation and Conventions}\label{Section: Notation}
We begin by introducing some of the objects that will remain fixed throughout the article. Let $G$ denote a connected, simply-connected simple algebraic group over $\mathbb{C}$ with Lie algebra $\mathfrak{g}$. One then has the adjoint representations $$\Adj:G\rightarrow\GL(\mathfrak{g}),\text{ }\text{ }g\mapsto\Adj_g$$ and $$\adj:\mathfrak{g}\rightarrow\mathfrak{gl}(\mathfrak{g}),\text{ }\text{ }\xi\mapsto\adj_{\xi}.$$ 

Fix a pair of opposite Borel subgroups $B,B_{-}\subseteq G$, whose intersection is then a maximal torus $T=B\cap B_{-}$. Let $\mathfrak{t}$ and $\mathfrak{b}$ denote the Lie algebras of $T$ and $B$, respectively. One has the weight lattice $X^*(T):=\Hom(T,\mathbb{C}^*)$ and collections of roots $\Delta\subseteq X^*(T)$, positive roots $\Delta_{+}\subseteq\Delta$, negative roots $\Delta_{-}\subseteq\Delta$, and simple roots $\Pi\subseteq\Delta_{+}$. Since $G$ is simple, there is a unique highest root $\theta\in\Delta_{+}$. Finally, let $W=N_G(T)/T$ be the Weyl group.

By virtue of the choices made above, $\Delta$ and $W$ are posets. The partial order on the former is $$\beta\leq\gamma\Longleftrightarrow \gamma-\beta=\sum_{\alpha\in\Pi}n_{\alpha}\alpha$$ for some $n_{\alpha}\in\mathbb{Z}_{\geq 0}$. The Weyl group $W$ carries the Bruhat order, an excellent reference for which is \cite{DeodharCoxeter}. We shall always assume $\Delta$ and $W$ to be partially ordered in the manner described here.

This article will make extensive use of $T$-equivariant cohomology. More explicitly, let $ET\rightarrow BT$ denote the universal principal $T$-bundle. If $X$ is a topological space equipped with a continuous $T$-action, then the product $ET\times X$ carries the diagonal $T$-action. Furthermore, the \textit{Borel mixing space} $X_T$ of $X$ is defined by $$X_T:=(ET\times X)/T.$$ The $T$-\textit{equivariant cohomology} of $X$ (with rational coefficients) is then defined to be $$H_T^*(X;\mathbb{Q}):=H^*(X_T;\mathbb{Q}),$$ the ordinary cohomology of $X_T$. We will henceforth assume all cohomology (both ordinary and equivariant) and homology to be over $\mathbb{Q}$, and we will suppress the $\mathbb{Q}$-coefficients in our notation.

Let us take a moment to recall an important description of $H_T^*(\text{pt})$, the $T$-equivariant cohomology of the one-point space. Given $\alpha\in X^*(T)$, let $\mathbb{C}_{\alpha}$ denote the one-dimensional $T$-representation of weight $\alpha$. We may regard $\mathbb{C}_{\alpha}$ as a $T$-equivariant complex line bundle over a point. As such, it has a $T$-equivariant first Chern class, $c_1^T(\mathbb{C}_{\alpha})\in H_T^2(\text{pt})$. We then have a degree-doubling $\mathbb{Q}$-algebra isomorphism $$\varphi:\Sym(X^*(T)\otimes_{\mathbb{Z}}\mathbb{Q})\rightarrow H_T^*(\text{pt}),$$ characterized by the property that $\varphi(\alpha)=c_1^T(\mathbb{C}_{\alpha})$ for all $\alpha\in X^*(T)$. With this in mind, we will make no distinction between $H_T^*(\text{pt})$ and $\Sym(X^*(T)\otimes_{\mathbb{Z}}\mathbb{Q})$.

\subsection{The Hessenberg Varieties of Interest}\label{Section: Minimal Nilpotent Hessenberg Variety} 
Suppose that $H\subseteq\mathfrak{g}$ is a \textit{Hessenberg subspace}, namely a $\mathfrak{b}$-invariant subspace of $\mathfrak{g}$ containing $\mathfrak{b}$.\footnote{We emphasize that $H$ need not be a parabolic subalgebra of $\mathfrak{g}$.} Note that
\begin{equation}\label{Definition: Hessenberg roots}
H=\mathfrak{t}\oplus\bigoplus_{\gamma\in\Delta_H}\mathfrak{g}_{\gamma}=\mathfrak{b}\oplus\bigoplus_{\gamma\in\Delta_H^-}\mathfrak{g}_{\gamma}
\end{equation}
for some subsets $\Delta_H\subseteq\Delta$ and $\Delta_{H}^{-}\subseteq\Delta_{-}$ where $\Delta_{H}^{-}=\Delta_{H}\cap \Delta_{-}$.
We shall call the roots in $\Delta_H$ \textit{Hessenberg roots}, while calling those in $\Delta_H^-$ \textit{negative Hessenberg roots}.

Now, given $\xi\in\mathfrak{g}$, the subset $$G_H(\xi):=\{g\in G:\Adj_{g^{-1}}(\xi)\in H\}$$ is invariant under the right-multiplicative action of $B$ on $G$. We may therefore define $$X_H(\xi):=G_H(\xi)/B.$$ This is a closed (hence projective) subvariety of $G/B$, called a \textit{Hessenberg variety} (see \cite{DeMari}). If $\xi\in\mathfrak{g}$ is nilpotent, one calls $X_H(\xi)$ a \textit{nilpotent Hessenberg variety}.

The following relationship between adjoint orbits and Hessenberg varieties will help to give context for the Hessenberg varieties studied in this manuscript. 

\begin{lemma}\label{Conjugation Invariance}
If $\xi,\eta\in\mathfrak{g}$ belong to the same $G$-orbit, then $X_H(\xi)$ and $X_H(\eta)$ are isomorphic as varieties.
\end{lemma}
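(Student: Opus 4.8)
The plan is to use the left-multiplication action of $G$ on $G/B$. Since $\xi$ and $\eta$ lie in the same $G$-orbit, choose $h\in G$ with $\eta=\Adj_h(\xi)$. First I would record that left translation $L_h\colon G\rightarrow G$, $g\mapsto hg$, is an automorphism of the variety $G$ which commutes with the right-multiplicative $B$-action; hence it descends to an automorphism $\overline{L}_h\colon G/B\rightarrow G/B$, with inverse $\overline{L}_{h^{-1}}$.

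Next I would check that $\overline{L}_h$ carries $X_H(\xi)$ onto $X_H(\eta)$. For $g\in G$ one computes
\[
\Adj_{(hg)^{-1}}(\eta)=\Adj_{g^{-1}}\bigl(\Adj_{h^{-1}}(\eta)\bigr)=\Adj_{g^{-1}}(\xi),
\]
using $\Adj_{h^{-1}}(\eta)=\xi$. Thus $hg\in G_H(\eta)$ if and only if $g\in G_H(\xi)$, i.e.\ $L_h(G_H(\xi))=G_H(\eta)$. Passing to the quotient by $B$ yields $\overline{L}_h(X_H(\xi))=X_H(\eta)$, and symmetrically $\overline{L}_{h^{-1}}(X_H(\eta))=X_H(\xi)$.

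Finally I would conclude: since the restriction of a morphism of varieties to a closed subvariety is again a morphism, $\overline{L}_h$ restricts to a morphism $X_H(\xi)\rightarrow X_H(\eta)$ and $\overline{L}_{h^{-1}}$ restricts to its two-sided inverse, so $\overline{L}_h$ gives an isomorphism $X_H(\xi)\xrightarrow{\ \sim\ }X_H(\eta)$ of varieties.

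There is no substantive obstacle here; the lemma is a formal consequence of the definition of $X_H(\xi)$ together with the $G$-equivariance of the construction. The only points meriting a moment of care are the verification that $L_h$ is $B$-equivariant (so that it genuinely descends to $G/B$) and the standard fact that restriction to a closed subvariety preserves the property of being an isomorphism; both are routine.
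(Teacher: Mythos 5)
Your proposal is correct and follows essentially the same route as the paper: both use left multiplication by the group element conjugating $\xi$ to $\eta$, observe that it maps $G_H(\xi)$ isomorphically onto $G_H(\eta)$ and commutes with the right $B$-action, and then descend to the quotients. Your write-up simply makes the computation $\Adj_{(hg)^{-1}}(\eta)=\Adj_{g^{-1}}(\xi)$ and the descent to $G/B$ more explicit than the paper does.
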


\begin{proof}
By assumption $\eta=\Adj_g(\xi)$ for some $g\in G$. Note that left-multiplication by $g$ defines an isomorphism from $G_H(\xi)$ to $G_H(\eta)$. This isomorphism is $B$-equivariant for the right-multiplicative action of $B$. Hence, the quotients $G_H(\xi)/B=X_H(\xi)$ and $G_H(\eta)/B=X_H(\eta)$ are isomorphic. 
\end{proof}

Fix a non-zero vector in the highest root space, $e_{\theta}\in\mathfrak{g}_{\theta}\setminus\{0\}$, and consider the nilpotent Hessenberg variety $X_H(e_{\theta})$. Noting that $e_{\theta}$ belongs to the minimal nilpotent orbit $\mathcal{O}_{\text{min}}$ of $G$, Lemma \ref{Conjugation Invariance} implies that $X_H(\xi)\cong X_H(e_{\theta})$ for all $\xi\in\mathcal{O}_{\text{min}}$. In this sense, $X_H(e_{\theta})$ is precisely the Hessenberg variety arising from the minimal nilpotent orbit. 

Letting the Hessenberg subspace $H$ vary, the $X_H(e_{\theta})$ constitute an interesting family of subvarieties of $G/B$. With respect to inclusion, the largest and smallest are $X_{\mathfrak{g}}(e_{\theta})$ and $X_{\mathfrak{b}}(e_{\theta})$, respectively. The former is easily seen to be $G/B$ itself, while the latter is the Springer fibre above $e_{\theta}$. In particular, $X_H(e_{\theta})$ is sometimes singular and reducible.

To obtain additional examples, we will need to recall a concrete description of Hessenberg varieties in type $A$.
  
\subsection{Examples in Type $A$}\label{Section: Examples in Type A}

Suppose that $G=\SL_n(\mathbb{C})$ with $n\geq2$, and that $T$ are $B$ are the subgroups of diagonal and upper-triangular matrices in $\SL_n(\mathbb{C})$, respectively. For distinct $i,j\in\{1,2,\ldots,n\}$, $t_i-t_j$ shall denote the root \begin{equation}
T\rightarrow\mathbb{C}^*,\text{ }\text{ }\begin{bmatrix} t_1 & 0 & 0 & \ldots & 0\\ 0 & t_2 & 0 & \ldots & 0\\ \vdots & \vdots & \vdots & \ddots & \vdots\ \\ 0 & 0 & 0 & \ldots & t_n\end{bmatrix}\mapsto t_it_{j}^{-1}.\end{equation} The highest root is then given by \begin{equation}\label{Highest Root}
\theta:=t_1-t_n,\end{equation} and \begin{align*}
e_{\theta} := 
 \begin{bmatrix}
  0 & 0 & \ldots & 0 & 1 \\
  0 & 0 & \ldots & 0 & 0\\
  \vdots & \vdots & \ddots & \vdots & \vdots \\
  0 & 0 & \ldots & 0 & 0
 \end{bmatrix}
\end{align*} is a choice of highest root vector. 

Now, suppose that $H\subseteq\mathfrak{sl}_n(\mathbb{C})$ is a Hessenberg subspace. There exists a unique weakly increasing function $h:\{1,2,\ldots,n\}\rightarrow\{1,2,\ldots,n\}$ with $j\leq h(j)$ for all $j$, such that \begin{equation}\label{Hessenberg Correspondence}
H=\{[a_{ij}]\in\mathfrak{sl}_n(\mathbb{C}):a_{ij}=0\text{ for } i>h(j)\}.\end{equation} Noting that \eqref{Hessenberg Correspondence} defines a bijective correspondence between the Hessenberg subspaces $H$ and all such functions $h$, one calls these functions \textit{Hessenberg functions}. We will represent a Hessenberg function $h$ by listing its values, so that $h=(h(1),h(2),\ldots,h(n))$.

Consider the variety $Flags(\mathbb{C}^n)$ of full flags $V_{\bullet}=(\{0\}\subseteq V_1\subseteq V_2\subseteq\ldots\subseteq V_{n-1}\subseteq\mathbb{C}^n)$ of subspaces of $\mathbb{C}^n$. One has the usual variety isomorphism\begin{equation}\label{Flag Variety}
\SL_n(\mathbb{C})/B\cong Flags(\mathbb{C}^n).\end{equation} If $h=(h(1),h(2),\ldots,h(n))$ is the Hessenberg function corresponding to $H\subseteq\mathfrak{sl}_n(\mathbb{C})$, then \begin{equation}\label{type A example 50}X_H(e_{\theta})\cong\{V_{\bullet}\in Flags(\mathbb{C}^n):e_{\theta}(V_j)\subseteq V_{h(j)}\text{ for all }j\}\end{equation} via the isomorphism \eqref{Flag Variety}.

Let us use \eqref{type A example 50} to describe $X_H(e_{\theta})$ in the case where $n=3$ and $h=(2,3,3)$. Indeed, we have $$X_H(e_{\theta})\cong \{V_{\bullet}\in Flags(\mathbb{C}^3):e_{\theta}(V_1)\subseteq V_{2}\}.$$ Letting $\{e_1,e_2,e_3\}$ denote the standard basis of $\mathbb{C}^3$, it is straightforward to see that each $V_{\bullet}\in X_H(e_{\theta})$ must satisfy $V_1\subseteq\text{span}\{e_1,e_2\}$ or $e_1\in V_2$. Let $X_1$ and $X_2$ denote the subvarieties of $X_H(e_{\theta})$ defined by these respective conditions, so that $X_H(e_{\theta})=X_1\cup X_2$. Note that completing $V_1\subseteq\text{span}\{e_1,e_2\}$ to an element $V_{\bullet}\in X_{H}(e_{\theta})$ is equivalent to specifying a $2$-dimensional subspace $V_2$ containing $V_1$. Also, completing a $V_2$ containing $e_1$ to $V_{\bullet}\in X_{H}(e_{\theta})$ amounts to specifying a $1$-dimensional subspace $V_1$ contained in $V_2$. From these observations, we see that each of $X_1$ and $X_2$ is isomorphic to $\mathbb{P}^1\times\mathbb{P}^1$. The intersection of these subvarieties is seen to be two copies of $\mathbb{P}^1$ which themselves intersect in a single point. 

In Section \ref{Section: GKM Graphs in Type A}, we will study the above-mentioned example as a GKM variety (see Figure \ref{fig:GKM Graph for h=(2,3,3)}).

\section{The Equivariant Geometry of $X_H(e_{\theta})$}\label{Section: Minimal Nilpotent Hessenberg Varieties in Equivariant Geometry}
\subsection{Algebraic Group Actions on $X_H(e_{\theta})$}\label{Section: Algebraic Group Actions}
In contrast to a general nilpotent Hessenberg variety, $X_H(e_{\theta})$ is a union of Schubert varieties. Equivalently, we have the following proposition (cf. \cite{Tymoczko-Dimensions}, Proposition 4.1).

\begin{proposition}\label{B-invariance}
The variety $X_H(e_{\theta})$ is invariant under the action of $B$ on $G/B$. 
\end{proposition}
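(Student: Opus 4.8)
The plan is to show that $G_H(e_\theta) = \{g \in G : \Adj_{g^{-1}}(e_\theta) \in H\}$ is invariant under left-multiplication by $B$, which immediately descends to $B$-invariance of the quotient $X_H(e_\theta) = G_H(e_\theta)/B$ inside $G/B$. So let $b \in B$ and $g \in G_H(e_\theta)$; I must check that $bg \in G_H(e_\theta)$, i.e. that $\Adj_{(bg)^{-1}}(e_\theta) = \Adj_{g^{-1}}\bigl(\Adj_{b^{-1}}(e_\theta)\bigr) \in H$. Since by hypothesis $\Adj_{g^{-1}}(e_\theta) \in H$, it suffices to reduce to the case $g = e$: that is, it is enough to prove that $\Adj_{b^{-1}}(e_\theta) \in H$ for every $b \in B$ — but this would only handle the specific element $e_\theta$, not an arbitrary element of $H$. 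The correct reduction is therefore slightly different: I will instead show directly that $\Adj_{b^{-1}}(e_\theta)$ lies in the line $\mathfrak{g}_\theta$ (up to nonzero scalar), so that $\Adj_{g^{-1}}(\Adj_{b^{-1}}(e_\theta))$ is a scalar multiple of $\Adj_{g^{-1}}(e_\theta) \in H$, and hence lies in $H$.

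The key geometric input is that $\mathfrak{g}_\theta$ is a $B$-stable line. Concretely, since $\theta$ is the highest root, $\mathfrak{g}_\theta$ is annihilated by every positive root vector (there is no root $\theta + \alpha$ for $\alpha \in \Delta_+$), so $\mathfrak{g}_\theta$ is stable under $\adj_{\mathfrak{b}}$, hence under $\Adj_B$ as $B$ is connected; and $T$ acts on $\mathfrak{g}_\theta$ by the character $\theta$. Thus for $b \in B$ we may write $\Adj_b(e_\theta) = \chi(b)\, e_\theta$ for a (nonzero) scalar $\chi(b) \in \mathbb{C}^\times$ depending on $b$. Consequently $\Adj_{b^{-1}}(e_\theta) = \chi(b)^{-1} e_\theta$, and then
\[
\Adj_{(bg)^{-1}}(e_\theta) = \Adj_{g^{-1}}\!\bigl(\Adj_{b^{-1}}(e_\theta)\bigr) = \chi(b)^{-1}\,\Adj_{g^{-1}}(e_\theta) \in H,
\]
using that $H$ is a linear subspace containing $\Adj_{g^{-1}}(e_\theta)$. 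This shows $bg \in G_H(e_\theta)$, completing the argument.

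The one point requiring genuine care — and the main (mild) obstacle — is the claim that $\mathfrak{g}_\theta$ is $\Adj_B$-stable. The cleanest route is the highest-weight observation above: $[\mathfrak{g}_\alpha, \mathfrak{g}_\theta] \subseteq \mathfrak{g}_{\alpha+\theta} = 0$ for all $\alpha \in \Delta_+$ since $\theta$ is maximal in the root poset, and $[\mathfrak{t}, \mathfrak{g}_\theta] \subseteq \mathfrak{g}_\theta$, so $\mathfrak{b} = \mathfrak{t} \oplus \bigoplus_{\alpha \in \Delta_+}\mathfrak{g}_\alpha$ preserves the line $\mathfrak{g}_\theta$; exponentiating (or invoking that a connected group preserves a subspace iff its Lie algebra does) gives $\Adj_B(\mathfrak{g}_\theta) \subseteq \mathfrak{g}_\theta$. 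Alternatively, one can note that $\mathfrak{g}_\theta$ spans the tangent space at $eB$ to the one-dimensional $B$-orbit (a Schubert cell), but the Lie-algebra computation is more elementary and self-contained. Everything else is formal manipulation with the subspace property of $H$ and the cocycle identity $\Adj_{(bg)^{-1}} = \Adj_{g^{-1}}\circ\Adj_{b^{-1}}$.
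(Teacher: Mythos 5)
Your argument is correct and is essentially the paper's own proof: both reduce to showing $G_H(e_\theta)$ is stable under left multiplication by $B$ and then use that $\Adj_{b^{-1}}(e_\theta)$ is a nonzero scalar multiple of $e_\theta$ because $\theta$ is the highest root. You simply spell out the justification that $\mathfrak{g}_\theta$ is a $B$-stable line (via $[\mathfrak{g}_\alpha,\mathfrak{g}_\theta]=0$ for $\alpha\in\Delta_+$ and connectedness of $B$), which the paper leaves implicit.
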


\begin{proof}
It suffices to prove that $G_H(e_{\theta})$ is invariant under left-multiplication by elements of $B$. To this end, suppose that $b\in B$ and $g\in G_H(e_{\theta})$. We have $$\Adj_{(bg)^{-1}}(e_{\theta})=\Adj_{g^{-1}}(\Adj_{b^{-1}}(e_{\theta})).$$ Since $e_{\theta}$ belongs to the highest root space, $\Adj_{b^{-1}}(e_{\theta})$ is a scalar multiple of $e_{\theta}$. Hence, $\Adj_{g^{-1}}(\Adj_{b^{-1}}(e_{\theta}))$ is a scalar multiple of $\Adj_{g^{-1}}(e_{\theta})$, and therefore in $H$.
\end{proof}

As a consequence of Proposition \ref{B-invariance}, $X_H(e_{\theta})$ carries an action of the maximal torus $T$. Properties of this $T$-action will play an essential role in proving the main results in this paper. The first such property is a description of the $T$-fixed point set $X_H(e_{\theta})^T$. To this end, recall that the $T$-fixed points of $G/B$ are enumerated by the bijection \begin{equation}\label{T-Fixed Points in G/B}W\xrightarrow{\cong} (G/B)^T\end{equation} $$w\mapsto x_{w}:=[g],$$ where $g\in N_G(T)$ represents $w\in W$. Recalling the definition of the Hessenberg roots $\Delta_H$ (see \eqref{Definition: Hessenberg roots}), we have the following description of $X_H(e_{\theta})^T$.

\begin{proposition}\label{Fixed Points}
The $T$-fixed points of $X_H(e_{\theta})$ are given by $$X_H(e_{\theta})^T=\{x_w:w\in W\text{ and }w^{-1}\theta\in \Delta_H\}.$$
\end{proposition}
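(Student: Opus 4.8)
The plan is to unwind the definitions and reduce the claim to a statement about when $\Adj_{g^{-1}}(e_\theta)$ lands in $H$ for $g$ a representative of a Weyl group element. First I would recall that $x_w \in X_H(e_\theta)$ if and only if a representative $g \in N_G(T)$ of $w$ satisfies $g \in G_H(e_\theta)$, i.e. $\Adj_{g^{-1}}(e_\theta) \in H$; this is well-defined by Proposition \ref{B-invariance} (or directly by right $B$-invariance of $G_H(e_\theta)$). So the task is to compute $\Adj_{g^{-1}}(e_\theta)$ for such a $g$.

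The key computational step is that $N_G(T)$ acts on root spaces by permuting them according to the Weyl group action on weights: for $g \in N_G(T)$ representing $w \in W$ and any root $\gamma$, one has $\Adj_g(\mathfrak{g}_\gamma) = \mathfrak{g}_{w\gamma}$. Applying this with $g^{-1}$ (representing $w^{-1}$) and $\gamma = \theta$ gives $\Adj_{g^{-1}}(\mathfrak{g}_\theta) = \mathfrak{g}_{w^{-1}\theta}$. Since $e_\theta$ is a nonzero element of $\mathfrak{g}_\theta$, its image $\Adj_{g^{-1}}(e_\theta)$ is a nonzero element of the one-dimensional space $\mathfrak{g}_{w^{-1}\theta}$.

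Now I would invoke the weight-space decomposition \eqref{Definition: Hessenberg roots}: $H = \mathfrak{t} \oplus \bigoplus_{\gamma \in \Delta_H} \mathfrak{g}_\gamma$. A nonzero element of $\mathfrak{g}_{w^{-1}\theta}$ lies in $H$ if and only if the weight space $\mathfrak{g}_{w^{-1}\theta}$ is one of the summands, i.e. if and only if $w^{-1}\theta \in \Delta_H$. (Here one uses that $w^{-1}\theta$ is an honest root, so it cannot be absorbed by $\mathfrak{t}$ and it appears as exactly one of the summands of the direct sum.) Combining, $x_w \in X_H(e_\theta)$ if and only if $w^{-1}\theta \in \Delta_H$. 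Intersecting with the identification $(G/B)^T = \{x_w : w \in W\}$ of \eqref{T-Fixed Points in G/B} — and noting that $X_H(e_\theta)^T = X_H(e_\theta) \cap (G/B)^T$ since $X_H(e_\theta)$ is a $T$-invariant closed subvariety — yields the stated description.

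I do not anticipate a serious obstacle here; the proof is essentially a bookkeeping exercise once the relation $\Adj_g(\mathfrak{g}_\gamma) = \mathfrak{g}_{w\gamma}$ is in hand. The one point deserving a little care is the independence of the choice of representative $g \in N_G(T)$ for $w$: different choices differ by an element of $T$, which scales $e_\theta$ and scales each root space, so neither the condition $\Adj_{g^{-1}}(e_\theta) \in H$ nor the coset $[g] \in G/B$ changes — this is exactly what makes $x_w$ and the membership condition well-defined, and it is also guaranteed abstractly by the right $B$-invariance of $G_H(e_\theta)$ used to define $X_H(e_\theta)$ in the first place.
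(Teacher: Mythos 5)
Your proposal is correct and follows essentially the same route as the paper's proof: reduce membership of $x_w=[g]$ to the condition $g\in G_H(e_{\theta})$ via the right $B$-invariance of $G_H(e_{\theta})$, and then identify $\Adj_{g^{-1}}(e_{\theta})\in H$ with $w^{-1}\theta\in\Delta_H$. You simply spell out in more detail the steps the paper leaves implicit — namely that $\Adj_{g^{-1}}(\mathfrak{g}_{\theta})=\mathfrak{g}_{w^{-1}\theta}$, the comparison with the weight decomposition \eqref{Definition: Hessenberg roots}, and the independence of the choice of representative — all of which are accurate.
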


\begin{proof}
Suppose that $w\in W$ is represented by $g\in N_G(T)$, so that $x_w=[g]\in G/B$. Hence, $x_w\in X_H(e_{\theta})$ if and only if $g=hb$ for some $h\in G_H(e_{\theta})$ and $b\in B$. Since $G_H(e_{\theta})$ is invariant under right-multiplication by elements of $B$, this is equivalent to the condition that $g\in G_H(e_{\theta})$. Equivalently, $\Adj_{g^{-1}}(e_{\theta})\in H$, which is precisely the statement that $w^{-1}\theta\in\Delta_H$. 
\end{proof}

For example, suppose that $G=\SL_n(\mathbb{C})$ and that $T\subseteq\SL_n(\mathbb{C})$ and $B\subseteq\SL_n(\mathbb{C})$ are the maximal torus and Borel subgroup considered in \ref{Section: Examples in Type A}, respectively. Recall that $W=S_n$ and let $h:\{1,2,\ldots,n\}\rightarrow\{1,2\ldots,n\}$ be a Hessenberg function corresponding to $H\subseteq\mathfrak{sl}_n(\mathbb{C})$. 
Since the highest root is as given in \eqref{Highest Root}, Proposition \ref{Fixed Points} implies that $x_w\in X_H(e_{\theta})^T$ if and only if the root space of $w^{-1}\theta:=t_{w^{-1}(1)}-t_{w^{-1}(n)}$ belongs to $H$. Noting that this root space is spanned by the matrix with entry $1$ in position $(w^{-1}(1),w^{-1}(n))$ and all other entries $0$, our characterization becomes \begin{equation}
\label{Type A Fixed Points}
x_w\in X_H(e_{\theta})^T\Longleftrightarrow w^{-1}(1)\leq h(w^{-1}(n)).\end{equation} 
This condition can be visualized in terms of the \textit{one-line notation}\footnote{For a permutation $w\in S_n$, the list $w(1) \ w(2) \ \ldots \ w(n)$ is called the one-line-notation for $w$.} for $w\in S_n$. Letting $i$ and $j$ be the positions of $1$ and $n$ in the one-line notation for $w$ respectively, the condition \eqref{Type A Fixed Points} becomes equivalent to $i\leq h(j)$ (ie. $h$ determines the amount by which the position of $1$ can exceed that of $n$ in the one-line notation). 

\subsection{The Euler Number of $X_H(e_{\theta})$}\label{Section: Euler Number} 
This section addresses the computation of $\vert X_H(e_{\theta})^T\vert$, the Euler number of $X_H(e_{\theta})$. More precisely, we give a general formula for $\vert X_H(e_{\theta})^T\vert$ and then specialize it to cases in which one can be more explicit. 

To begin, Proposition \ref{Fixed Points} implies that \begin{equation}\label{Euler 1}\vert X_H(e_{\theta})^T\vert=\vert\{w\in W:w^{-1}\theta\in\Delta_H\}\vert=\vert\{w\in W:w\theta\in\Delta_H\}\vert.\end{equation} Since $W$ preserves the set of long roots $\Delta_{\text{long}}\subseteq\Delta$, \eqref{Euler 1} becomes \begin{equation}\label{Euler 2}
\vert X_H(e_{\theta})^T\vert=\vert\{w\in W:w\theta\in\Delta_{\text{long,H}}\}\vert,
\end{equation} where $\Delta_{\text{long},H}:=\Delta_{\text{long}}\cap\Delta_H$. Noting that $W$ acts transitively on $\Delta_{\text{long}}$, we have $$\vert\{w\in W:w\theta=\alpha\}\vert=\vert\{w\in W:w\theta=\theta\}\vert=\frac{\vert W\vert}{\vert\Delta_{\text{long}}\vert}$$ for all $\alpha\in\Delta_{\text{long},H}$. 
Hence, \eqref{Euler 2} becomes 
\begin{proposition}\label{Prop General Euler Number}
\begin{equation}\label{General Euler Number}
\vert X_H(e_{\theta})^T\vert=\vert W\vert\frac{\vert\Delta_{\emph{long},H}\vert}{\vert\Delta_{\emph{long}}\vert}.
\end{equation}   
\end{proposition}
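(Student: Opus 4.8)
The proof is essentially assembled from the chain of equalities \eqref{Euler 1}--\eqref{Euler 2} already laid out in the exposition, so the plan is simply to make each link precise and then combine them. First I would record the starting point: Proposition \ref{Fixed Points} gives $\vert X_H(e_{\theta})^T\vert = \vert\{w\in W : w^{-1}\theta\in\Delta_H\}\vert$, and since $w\mapsto w^{-1}$ is a bijection of $W$, this equals $\vert\{w\in W : w\theta\in\Delta_H\}\vert$. Next I would invoke the standard fact that the Weyl group permutes roots of a fixed length, so $w\theta$ is always a long root (as $\theta$ is); hence the condition $w\theta\in\Delta_H$ is equivalent to $w\theta\in\Delta_{\mathrm{long},H} := \Delta_{\mathrm{long}}\cap\Delta_H$, giving \eqref{Euler 2}.

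The one genuinely substantive input is the transitivity of the $W$-action on $\Delta_{\mathrm{long}}$, which I would cite from standard Lie theory (e.g. Bourbaki, or Humphreys' book on reflection groups): for any connected simple $\mathfrak{g}$, $W$ acts transitively on the long roots. The orbit-stabilizer theorem then gives, for each $\alpha\in\Delta_{\mathrm{long}}$, that $\vert\{w\in W : w\theta = \alpha\}\vert = \vert\Stab_W(\theta)\vert = \vert W\vert / \vert\Delta_{\mathrm{long}}\vert$, a quantity independent of $\alpha$. Summing this constant over the $\vert\Delta_{\mathrm{long},H}\vert$ choices of $\alpha$ in $\Delta_{\mathrm{long},H}$ partitions $\{w\in W : w\theta\in\Delta_{\mathrm{long},H}\}$ into fibers and yields
\[
\vert X_H(e_{\theta})^T\vert = \sum_{\alpha\in\Delta_{\mathrm{long},H}} \frac{\vert W\vert}{\vert\Delta_{\mathrm{long}}\vert} = \vert W\vert\,\frac{\vert\Delta_{\mathrm{long},H}\vert}{\vert\Delta_{\mathrm{long}}\vert},
\]
which is exactly \eqref{General Euler Number}.

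There is no real obstacle here; the argument is a routine orbit-counting computation once the two external facts ($W$ preserves root length and acts transitively on long roots) are in hand. The only point requiring a word of care is the identification of $\vert X_H(e_{\theta})^T\vert$ with the Euler number (topological Euler characteristic) of $X_H(e_{\theta})$ — this uses that $X_H(e_{\theta})$ admits an algebraic $T$-action (Proposition \ref{B-invariance}) whose fixed locus is finite, together with the standard fact that for such actions on a projective variety the Euler characteristic equals the number of fixed points (via, say, a Białynicki-Birula decomposition or the localization theorem); I would mention this once and otherwise treat "Euler number" and $\vert X_H(e_{\theta})^T\vert$ as interchangeable, consistent with the phrasing already used in \ref{Section: Euler Number}.
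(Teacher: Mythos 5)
Your proposal is correct and follows essentially the same route as the paper: Proposition \ref{Fixed Points} plus the inversion bijection, the $W$-invariance of root length to pass to $\Delta_{\text{long},H}$, and transitivity of $W$ on long roots with orbit--stabilizer counting to get the constant fiber size $\vert W\vert/\vert\Delta_{\text{long}}\vert$. The closing remark identifying $\vert X_H(e_{\theta})^T\vert$ with the Euler characteristic is a reasonable addition but not something the paper's proof dwells on.
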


We now specialize \eqref{General Euler Number} to some particularly tractable cases. Firstly, \eqref{General Euler Number} is seen to imply that the Springer fibre $X_{\mathfrak{b}}(e_{\theta})$ contains exactly one-half of the $T$-fixed points in $G/B$.

\begin{corollary}\label{Euler Number of Springer Fibre}
The Euler number of our Springer fibre $X_{\mathfrak{b}}(e_{\theta})$ is given by 
\begin{equation}\label{Euler Number of Springer Fibre}
\vert X_{\mathfrak{b}}(e_{\theta})^T\vert=\frac{\vert W\vert}{2}.
\end{equation}
\end{corollary}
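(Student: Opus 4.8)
The plan is to specialize the general formula \eqref{General Euler Number} of Proposition \ref{Prop General Euler Number} to the Hessenberg subspace $H=\mathfrak{b}$. First I would identify the negative Hessenberg roots in this case: since $\mathfrak{b}=\mathfrak{t}\oplus\bigoplus_{\gamma\in\Delta_{+}}\mathfrak{g}_{\gamma}$, the decomposition \eqref{Definition: Hessenberg roots} forces $\Delta_{\mathfrak{b}}=\Delta_{+}$ (equivalently $\Delta_{\mathfrak{b}}^{-}=\emptyset$). Consequently $\Delta_{\text{long},\mathfrak{b}}=\Delta_{\text{long}}\cap\Delta_{\mathfrak{b}}=\Delta_{\text{long}}\cap\Delta_{+}$, which is exactly the set of positive long roots.

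The only substantive point is the counting identity $\vert\Delta_{\text{long}}\cap\Delta_{+}\vert=\tfrac{1}{2}\vert\Delta_{\text{long}}\vert$. This follows because the involution $\alpha\mapsto-\alpha$ preserves $\Delta_{\text{long}}$ (length is unchanged under negation) and has no fixed points on $\Delta$, while for every root $\alpha$ exactly one of $\alpha,-\alpha$ lies in $\Delta_{+}$; hence this involution restricts to a fixed-point-free pairing of $\Delta_{\text{long}}\cap\Delta_{+}$ with $\Delta_{\text{long}}\cap\Delta_{-}$, and the two sets have equal size and partition $\Delta_{\text{long}}$.

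Plugging $\vert\Delta_{\text{long},\mathfrak{b}}\vert/\vert\Delta_{\text{long}}\vert=\tfrac12$ into \eqref{General Euler Number} then yields $\vert X_{\mathfrak{b}}(e_{\theta})^{T}\vert=\vert W\vert/2$, as claimed. Since the Euler number of a projective variety with an algebraic torus action equals the number of torus-fixed points (a fact already used implicitly in this section when equating $\vert X_H(e_{\theta})^{T}\vert$ with the Euler number), this completes the argument.

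I do not anticipate any real obstacle here: the corollary is a one-line specialization of Proposition \ref{Prop General Euler Number} once one observes that $\Delta_{\mathfrak{b}}=\Delta_{+}$ and that positive long roots are exactly half of all long roots. If anything, the only thing worth stating carefully is the harmless remark that the highest root $\theta$ is long, so that $\mathcal{O}_{\text{min}}$ meets the long-root spaces and the passage from \eqref{Euler 1} to \eqref{Euler 2} — on which \eqref{General Euler Number} rests — is legitimate; but this is already built into Proposition \ref{Prop General Euler Number}.
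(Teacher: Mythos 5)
Your proposal is correct and follows the paper's own argument exactly: specialize Proposition \ref{Prop General Euler Number} to $H=\mathfrak{b}$, note that $\Delta_{\text{long},\mathfrak{b}}$ is the set of positive long roots, and use the fact that these constitute exactly half of $\Delta_{\text{long}}$ (which the paper states without the explicit $\alpha\mapsto-\alpha$ pairing you supply). No gaps.
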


\begin{proof}
Since the number of positive long roots coincides with the number of negative long roots, we see that $|\Delta_{\text{long},\mathfrak{b}}|=\frac{1}{2}|\Delta_{\text{long}}|$. The formula \eqref{Euler Number of Springer Fibre} then follows from \eqref{General Euler Number}. 
\end{proof}

Our second specialization of \eqref{General Euler Number} is to the simply-laced case, in which $\Delta_{\text{long}}=\Delta$ and $\Delta_{\text{long},H}=\Delta_H$. Hence, $\vert\Delta_{\text{long}}\vert=\dim(\mathfrak{g})-\text{rank}(\mathfrak{g})$ and $\vert\Delta_{\text{long},H}\vert=\dim(H)-\text{rank}(\mathfrak{g})$, so that \eqref{General Euler Number} reads as 
\begin{corollary}
In the simply-laced case, we have
\begin{equation}\label{Simply-Laced Euler Number}
\vert X_H(e_{\theta})^T\vert=\vert W\vert\left(\frac{\dim(H)-\emph{rank}(\mathfrak{g})}{\dim(\mathfrak{g})-\emph{rank}(\mathfrak{g})}\right).
\end{equation}
\end{corollary}
For example, if $G=\SL_n(\mathbb{C})$, then $$\vert X_H(e_{\theta})^T\vert=(n-2)!(\dim(H)-n+1).$$

\subsection{The Codimension of $X_H(e_{\theta})$}\label{Section: The Size}
Let us write $\Pi=\{\alpha_1,\alpha_2,\ldots,\alpha_n\}$, and let $\Pi_i:=\Pi-\{\alpha_i\}$ for $i\in\{1,2,\ldots,n\}$.
Denote by $P_i\subseteq G$ the \textit{maximal} parabolic subgroup corresponding to $\Pi_i\subseteq\Pi$. 
Also, let $W_{P_i}\subseteq W$ be the subgroup generated by the simple reflections $s_{\alpha_k}$ for $k\neq i$.
\begin{lemma}\label{Maximal Parabolic}
If $w\in W_{P_i}$, then $w^{-1}\theta\in\Delta_+$.
\end{lemma}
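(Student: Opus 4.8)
The plan is to exploit the characterization of when a Weyl group element moves $\theta$ to a negative root, using the standard fact that for $w \in W$ and a positive root $\beta$, one has $w\beta \in \Delta_-$ if and only if $w^{-1}$ sends some simple root in the support of $\beta$ appropriately — more precisely, I would use the length/inversion criterion: $w^{-1}\theta \in \Delta_-$ would force $w$ to be ``long enough'' to invert the highest root, and this cannot happen within a proper parabolic subgroup $W_{P_i}$. Concretely, first I would recall that $\theta = \sum_{j=1}^n c_j \alpha_j$ with all coefficients $c_j \geq 1$ strictly positive (since $G$ is simple, every simple root appears in the highest root with positive coefficient).

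Next, suppose for contradiction that $w \in W_{P_i}$ but $w^{-1}\theta \in \Delta_-$, i.e. $w^{-1}\theta = -\sum_{j} d_j \alpha_j$ with all $d_j \geq 0$. The key step is to track the coefficient of $\alpha_i$. Since $w \in W_{P_i}$ is a product of simple reflections $s_{\alpha_k}$ with $k \neq i$, and each such $s_{\alpha_k}$ changes a root $\mu = \sum_j m_j \alpha_j$ by adding a multiple of $\alpha_k$ (so it leaves the $\alpha_i$-coefficient $m_i$ unchanged), the $\alpha_i$-coefficient of $w^{-1}\theta$ must equal the $\alpha_i$-coefficient of $\theta$, namely $c_i$. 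But $c_i \geq 1 > 0$, while the $\alpha_i$-coefficient of $w^{-1}\theta \in \Delta_-$ is $-d_i \leq 0$. This is a contradiction, so $w^{-1}\theta \in \Delta_+$.

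I would phrase the middle step carefully: $W_{P_i}$ is precisely the parabolic (Weyl) subgroup of $W$ generated by $\{s_{\alpha_k} : k \neq i\}$, which is the Weyl group of the root subsystem $\Delta_{\Pi_i} = \Delta \cap \operatorname{span}_{\mathbb{Z}}(\Pi_i)$; this subsystem lies in the span of the $\alpha_k$ with $k \neq i$, hence $W_{P_i}$ acts by transformations that preserve the coset $\mathfrak{t}^* / \operatorname{span}(\Pi_i)$ — equivalently, the $\alpha_i$-coordinate of any weight is fixed by $W_{P_i}$. Applying this to $\theta$ and noting $c_i \neq 0$ gives the result immediately.

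The main obstacle, such as it is, is purely bookkeeping: making sure the claim ``every simple root occurs in $\theta$ with nonzero coefficient'' is correctly invoked (this is standard for simple Lie algebras and can be cited, e.g. from Bourbaki or Humphreys), and stating cleanly that elements of $W_{P_i}$ fix the $\alpha_i$-component of any weight. Neither is genuinely hard; the proof is short once these two facts are in place.
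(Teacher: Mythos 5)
Your argument is correct and is essentially identical to the paper's proof: both track the coefficient of $\alpha_i$ in $w^{-1}\theta$, note that reflections $s_{\alpha_k}$ with $k\neq i$ preserve it, and conclude from $m_i>0$ that $w^{-1}\theta$ must be positive. No meaningful difference in approach.
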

\begin{proof}
Note that $\theta=\sum_{k=1}^n m_k\alpha_k$ with $m_k>0$ for $k\in\{1,2,\ldots,n\}$.
Also, recall that we have 
\begin{equation}\label{Simple Reflection}
s_{\alpha_{\ell}}(\alpha_{k}) = \alpha_{k} - \frac{2\langle \alpha_{k}, \alpha_{\ell}\rangle}{\langle \alpha_{\ell}, \alpha_{\ell}\rangle}\alpha_{\ell}.
\end{equation}
If $w\in W_{P_i}$ and we write $w^{-1}\theta=\sum_{k=1}^n d_k\alpha_k$ for some $d_k\in\mathbb{Z}$, then \eqref{Simple Reflection} implies $d_i=m_i>0$. Since $w^{-1}\theta$ is a root, this shows it to be a positive root.
\end{proof}

\begin{proposition}\label{thickness}
For any maximal parabolic subgroup $P_i$, we have
$P_i/B \subseteq X_H(e_{\theta}) \subseteq G/B$.
\end{proposition}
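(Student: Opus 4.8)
The plan is to reduce the claim to a statement about $T$-fixed points together with $B$-invariance, rather than attempting a direct computation with flags. The right inclusion $P_i/B \subseteq G/B$ is automatic, so the content is the left inclusion $P_i/B \subseteq X_H(e_\theta)$. First I would observe that $X_H(e_\theta)$ is $B$-invariant (Proposition \ref{B-invariance}), hence in particular $T$-invariant and closed; and $P_i/B$ is an irreducible closed $B$-invariant subvariety of $G/B$, being a union of Schubert cells indexed by $W_{P_i}$. Since $P_i/B$ is a smooth projective variety with a $B$-action, it is covered by the $B$-orbits through its $T$-fixed points, which are exactly the $x_w$ for $w \in W_{P_i}$. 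Therefore it suffices to show $x_w \in X_H(e_\theta)$ for every $w \in W_{P_i}$, because then the $B$-orbit (equivalently, Schubert cell) through each such $x_w$ lies in $X_H(e_\theta)$ by $B$-invariance, and these cells exhaust $P_i/B$.

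Next I would invoke Proposition \ref{Fixed Points}: $x_w \in X_H(e_\theta)$ if and only if $w^{-1}\theta \in \Delta_H$. So the problem becomes purely combinatorial: show $w^{-1}\theta \in \Delta_H$ for all $w \in W_{P_i}$. Here is where Lemma \ref{Maximal Parabolic} does the work — it tells us $w^{-1}\theta \in \Delta_+$ for $w \in W_{P_i}$. But every positive root lies in $\Delta_H$, since $H \supseteq \mathfrak{b}$ forces $\Delta_+ \subseteq \Delta_H$ (the positive root spaces all sit inside $\mathfrak{b} \subseteq H$, and by \eqref{Definition: Hessenberg roots} the set of $\gamma$ with $\mathfrak{g}_\gamma \subseteq H$ is exactly $\Delta_H$). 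Hence $w^{-1}\theta \in \Delta_+ \subseteq \Delta_H$, giving $x_w \in X_H(e_\theta)$ for all $w \in W_{P_i}$, as needed.

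Finally I would assemble the two halves: the $T$-fixed points of $P_i/B$ all lie in the $B$-stable closed set $X_H(e_\theta)$, so the Schubert cell $BwB/B$ through each of them is contained in $X_H(e_\theta)$; taking the union over $w \in W_{P_i}$ yields $P_i/B = \bigsqcup_{w \in W_{P_i}} BwB/B \subseteq X_H(e_\theta)$. I do not expect a serious obstacle: the one point requiring slight care is the standard fact that $P_i/B$ decomposes as the union of the Schubert cells indexed by $W_{P_i}$ (equivalently, that $P_i/B$ is the Schubert variety of the longest element of $W_{P_i}$), and that every cell meets the fixed-point set in exactly one point — these are classical Bruhat-decomposition facts for parabolics and can simply be cited. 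The only genuinely new input is Lemma \ref{Maximal Parabolic}, which has already been proved, so the proposition is essentially a two-line corollary once the setup above is in place.
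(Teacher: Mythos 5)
Your proposal is correct and follows essentially the same route as the paper: both reduce to checking that the $T$-fixed points $x_w$ for $w\in W_{P_i}$ lie in $X_H(e_{\theta})$ via Lemma \ref{Maximal Parabolic} and Proposition \ref{Fixed Points}, and then conclude using the Schubert-cell decompositions of $P_i/B$ and $X_H(e_{\theta})$. You simply make explicit the small step $\Delta_+\subseteq\Delta_H$ (from $\mathfrak{b}\subseteq H$) that the paper leaves implicit.
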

\begin{proof}
Lemma \ref{Maximal Parabolic} implies that $w^{-1}\theta\in\Delta_H$ for any $w\in W_{P_i}$, which by Proposition \ref{Fixed Points} means that $x_w\in X_H(e_{\theta})^T$.
Since we have $P_i/B=\coprod_{w\in W_{P_i}}BwB/B$ and $X_H(e_{\theta})=\coprod_{x_w\in X_H(e_{\theta})^T}BwB/B$, we obtain $P_i/B \subseteq X_H(e_{\theta}) \subseteq G/B$.
\end{proof}

This proposition has interesting implications for the codimension of $X_H(e_{\theta})$ in $G/B$.
Indeed, when $G=\SL_n(\mathbb{C})$, a suitable choice of maximal parabolic $P_i$ gives (by Proposition \ref{thickness}) $$Flags(\mathbb{C}^{n-1}) \subseteq X_H(e_{\theta}) \subseteq Flags(\mathbb{C}^n).$$
Hence, the complex codimension of $X_H(e_{\theta})$ in $Flags(\mathbb{C}^n)$ is at most $n-1$ when $G=\SL_n(\mathbb{C})$. Finally, in all Lie types, it is known that the codimension of $X_{\mathfrak{b}}(e_{\theta})$ in $G/B$ is equal to half the dimension of the minimal nilpotent orbit $\mathcal{O}_{\text{min}}$ (cf. \cite{Chriss}, Corollary 3.3.24, which is based on \cite{Spaltenstein}). Since $\dim_{\mathbb{C}}(\mathcal{O}_{\text{min}})=2h^{\vee}-2$ (see \cite{Wang}, Theorem 1)\footnote{Here, $h^{\vee}$ is the \textit{dual Coxeter number}.}, we have 
\[
\text{codim}_{\mathbb{C}}(X_{\mathfrak{b}}(e_{\theta}))=h^{\vee}-1.
\]
For a general Hessenberg subspace $H$, the inclusion $X_{\mathfrak{b}}(e_{\theta})\subseteq X_H(e_{\theta})$ gives
\[
\text{codim}_{\mathbb{C}} (X_H(e_{\theta})) \leq h^{\vee}-1.
\]

\section{Poincar\'{e} Polynomials and Irreducible Components}\label{Section: Poincare Polynomials and Irreducible Components}

\subsection{Poincar\'{e} Polynomials in Type $A$}\label{Section: Poincare Polynomials in Type A}
We now compute the Poincar\'{e} polynomial $P_H(t)$ of $X_H(e_{\theta})$ when $G=\SL_n(\mathbb{C})$. Accordingly, we shall assume all notation to be as in \ref{Section: Examples in Type A}.

Consider an $n\times n$ grid of boxes, and let $(i,j)$ denote the box in row $i$ and column $j$. If $H\subseteq\mathfrak{sl}_n(\mathbb{C})$ is a Hessenberg subspace with Hessenberg function $h$, we shall call the stair-shaped sub-grid $$\{(i,j)\in\{1,2,\ldots,n\}\times\{1,2,\ldots,n\}\}:i\leq h(j)\}$$ the \textit{Hessenberg stair shape}. One identifies it by drawing a line in the $n\times n$ grid such that the sub-grid consists precisely of the boxes lying above the line. 

\begin{figure}[h]
\begin{picture}(350,90)(0,-90)
\put(125,0){\line(0,-1){75}}
\put(125,0){\line(1,0){15}}
\put(125,-15){\line(1,0){15}}
\put(125,-30){\line(1,0){15}}
\put(125,-45){\line(1,0){15}}
\put(125,-60){\line(1,0){15}}
\put(125,-75){\line(1,0){15}}
\put(140,0){\line(0,-1){75}}
\put(140,0){\line(1,0){15}}
\put(140,-15){\line(1,0){15}}
\put(140,-30){\line(1,0){15}}
\put(140,-45){\line(1,0){15}}
\put(140,-60){\line(1,0){15}}
\put(140,-75){\line(1,0){15}}
\put(155,0){\line(0,-1){75}}
\put(155,0){\line(1,0){15}}
\put(155,-15){\line(1,0){15}}
\put(155,-30){\line(1,0){15}}
\put(155,-45){\line(1,0){15}}
\put(155,-60){\line(1,0){15}}
\put(155,-75){\line(1,0){15}}
\put(170,0){\line(0,-1){75}}
\put(170,0){\line(1,0){15}}
\put(170,-15){\line(1,0){15}}
\put(170,-30){\line(1,0){15}}
\put(170,-45){\line(1,0){15}}
\put(170,-60){\line(1,0){15}}
\put(170,-75){\line(1,0){15}}
\put(185,0){\line(0,-1){75}}
\put(185,0){\line(1,0){15}}
\put(185,-15){\line(1,0){15}}
\put(185,-30){\line(1,0){15}}
\put(185,-45){\line(1,0){15}}
\put(185,-60){\line(1,0){15}}
\put(185,-75){\line(1,0){15}}
\put(200,0){\line(0,-1){75}}
%%%
\linethickness{0.5mm}
\put(125,0){\line(0,-1){30}}
\put(125,-30){\line(1,0){15}}
\put(140,-30){\line(0,-1){30}}
\put(140,-60){\line(1,0){15}}
\put(155,-60){\line(0,-1){15}}
\put(155,-75){\line(1,0){45}}
%%%
%\put(147.5,-7.5){\circle*{5}}
%\put(162.5,-22.5){\circle*{5}}
%\put(177.5,-37.5){\circle*{5}}
%\put(192.5,-52.5){\circle*{5}}
%\put(162,-22.5){\line(1,-1){30}}
%%%
%\put(128,-9.5){$\times$}
%\put(143,-9.5){$\times$}
%\put(158,-9.5){$\times$}
%\put(173,-9.5){$\times$}
%\put(188,-9.5){$\times$}
%%%
%\put(230,-10){$h=(2,4,5,5,5)$ and the 2nd line}
\end{picture}
\caption{The Hessenberg stair shape determined by $h=(2,4,5,5,5)$ when $n=5$}
\end{figure}
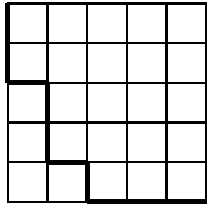

\begin{definition}\label{Definition of Diagonals}
For $0\leq i\leq 2n-3$, we define $q_H(i)$ to be the number of boxes in the Hessenberg stair shape meeting the diagonal line segment joining $(2,n-i)$ and $(2+i,n)$.
Namely, 
\begin{align*}
q_H(i) 
= |\{ (k,j)\in\{1,2,\ldots,n\}\times\{1,2,\ldots,n\}: 2\leq k\leq h(n+1-j), \ j+k-3=i \}|.
\end{align*}
\end{definition}
\vspace{10pt}

The number $q_H(i)$ is easily computed in practice. One starts with the rightmost box in row \#2, moves $i$ boxes to the left, and then draws the longest possible diagonal line segment passing through the current box and not passing through a box in row \#1. The number of boxes meeting this line segment is precisely $q_H(i)$.

The following figure illustrates the computation of $q_H(2)$ in the case $n=5$ and $h=(2,4,5,5,5)$.
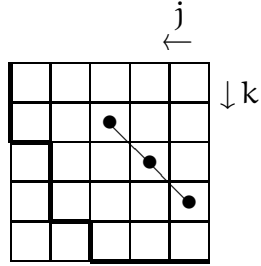
\begin{figure}[h]
\begin{picture}(350,90)(0,-90)
\put(125,0){\line(0,-1){75}}
\put(125,0){\line(1,0){15}}
\put(125,-15){\line(1,0){15}}
\put(125,-30){\line(1,0){15}}
\put(125,-45){\line(1,0){15}}
\put(125,-60){\line(1,0){15}}
\put(125,-75){\line(1,0){15}}
\put(140,0){\line(0,-1){75}}
\put(140,0){\line(1,0){15}}
\put(140,-15){\line(1,0){15}}

\put(140,-30){\line(1,0){15}}
\put(140,-45){\line(1,0){15}}
\put(140,-60){\line(1,0){15}}
\put(140,-75){\line(1,0){15}}
\put(155,0){\line(0,-1){75}}
\put(155,0){\line(1,0){15}}
\put(155,-15){\line(1,0){15}}
\put(155,-30){\line(1,0){15}}
\put(155,-45){\line(1,0){15}}
\put(155,-60){\line(1,0){15}}
\put(155,-75){\line(1,0){15}}
\put(170,0){\line(0,-1){75}}
\put(170,0){\line(1,0){15}}
\put(170,-15){\line(1,0){15}}
\put(170,-30){\line(1,0){15}}
\put(170,-45){\line(1,0){15}}
\put(170,-60){\line(1,0){15}}
\put(170,-75){\line(1,0){15}}
\put(185,0){\line(0,-1){75}}
\put(185,0){\line(1,0){15}}
\put(185,-15){\line(1,0){15}}
\put(185,-30){\line(1,0){15}}
\put(185,-45){\line(1,0){15}}
\put(185,-60){\line(1,0){15}}
\put(185,-75){\line(1,0){15}}
\put(200,0){\line(0,-1){75}}
%%%
\linethickness{0.5mm}
\put(125,0){\line(0,-1){30}}
\put(125,-30){\line(1,0){15}}
\put(140,-30){\line(0,-1){30}}
\put(140,-60){\line(1,0){15}}
\put(155,-60){\line(0,-1){15}}
\put(155,-75){\line(1,0){45}}
%%%
%\put(147.5,-7.5){\circle*{5}}
\put(162.5,-22.5){\circle*{5}}
\put(177.5,-37.5){\circle*{5}}
\put(192.5,-52.5){\circle*{5}}
\put(162,-22.5){\line(1,-1){30}}
%%%
\put(182,5){$\leftarrow$}
\put(187,15){$j$}
\put(204,-15){$\downarrow$}
\put(212,-15){$k$}
\end{picture}
\caption{The computation of $q_H(2)$ when $h=(2,4,5,5,5)$ and $n=5$. As per Definition \ref{Definition of Diagonals}, one begins by drawing the diagonal line segment connecting $(2,3)$ and $(4,5)$. Since this segment meets exactly $3$ boxes, we have $q_H(2)=3$.}
\end{figure}

It will be convenient to consider the polynomial 
$$q_H(t):=\sum_{i=0}^{2n-3}q_H(i)t^{2i}.$$ As is the case with its coefficients, this polynomial can be computed diagrammatically via the Hessenberg stair shape. One simply fills each box involved in the computation of $q_H(i)$ with $t^{2i}$, and then sums the resulting terms. The following figure illustrates this procedure.

\begin{figure}[h]
	\begin{picture}(350,105)(0,-90)
	\put(125,0){\line(0,-1){75}}
	\put(125,0){\line(1,0){15}}
	\put(125,-15){\line(1,0){15}}
	\put(125,-30){\line(1,0){15}}
	\put(125,-45){\line(1,0){15}}
	\put(125,-60){\line(1,0){15}}
	\put(125,-75){\line(1,0){15}}
	\put(140,0){\line(0,-1){75}}
	\put(140,0){\line(1,0){15}}
	\put(140,-15){\line(1,0){15}}
	\put(140,-30){\line(1,0){15}}
	\put(140,-45){\line(1,0){15}}
	\put(140,-60){\line(1,0){15}}
	\put(140,-75){\line(1,0){15}}
	\put(155,0){\line(0,-1){75}}
	\put(155,0){\line(1,0){15}}
	\put(155,-15){\line(1,0){15}}
	\put(155,-30){\line(1,0){15}}
	\put(155,-45){\line(1,0){15}}
	\put(155,-60){\line(1,0){15}}
	\put(155,-75){\line(1,0){15}}
	\put(170,0){\line(0,-1){75}}
	\put(170,0){\line(1,0){15}}
	\put(170,-15){\line(1,0){15}}
	\put(170,-30){\line(1,0){15}}
	\put(170,-45){\line(1,0){15}}
	\put(170,-60){\line(1,0){15}}
	\put(170,-75){\line(1,0){15}}
	\put(185,0){\line(0,-1){75}}
	\put(185,0){\line(1,0){15}}
	\put(185,-15){\line(1,0){15}}
	\put(185,-30){\line(1,0){15}}
	\put(185,-45){\line(1,0){15}}
	\put(185,-60){\line(1,0){15}}
	\put(185,-75){\line(1,0){15}}
	\put(200,0){\line(0,-1){75}}
	%%%
	\linethickness{0.5mm}
	\put(125,0){\line(0,-1){30}}
	\put(125,-30){\line(1,0){15}}
	\put(140,-30){\line(0,-1){30}}
	\put(140,-60){\line(1,0){15}}
	\put(155,-60){\line(0,-1){15}}
	\put(155,-75){\line(1,0){45}}
	%%%
	\put(182,5){$\leftarrow$}
	\put(187,15){$j$}
	\put(204,-15){$\downarrow$}
	\put(212,-15){$k$}
	%%%
	\put(189,-26){$t^0$}
	\put(189,-41){$t^2$}
	\put(189,-56){$t^4$}
	\put(189,-71){$t^6$}
	%%%
	\put(174,-26){$t^2$}
	\put(174,-41){$t^4$}
	\put(174,-56){$t^6$}
	\put(174,-71){$t^8$}
	%%%
	\put(159,-26){$t^4$}
	\put(159,-41){$t^6$}
	\put(159,-56){$t^8$}
	\put(157,-71){$t^{10}$}
	%%%
	\put(144,-26){$t^6$}
	\put(144,-41){$t^8$}
	\put(142,-56){$t^{10}$}
	%%%
	\put(129,-26){$t^{8}$}
	%%%
	%\put(128,-9.5){$\times$}
	%\put(143,-9.5){$\times$}
	%\put(158,-9.5){$\times$}
	%\put(173,-9.5){$\times$}
	%\put(188,-9.5){$\times$}
	%%%
	\end{picture}
	\caption{The computation of $q_H(t)$ for $n=5$ and $h=(2,4,5,5,5)$. One has $q_H(t)=1+2t^2+3t^4+4t^6+4t^8+2t^{10}$.}
\end{figure}
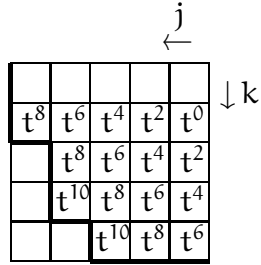

While $q_H(t)$ is not itself the  Poincar\'{e} polynomial of $X_H(e_{\lambda})$, we have the following proposition. 

\begin{proposition}\label{Poincare Polynomial Proposition}
The Poincar\'{e} polynomial $P_H(t)$ of $X_H(e_{\theta})$ is given by
\begin{align*}
P_H(t) = q_H(t)\cdot \prod_{\ell=1}^{n-3}(1+t^2+\ldots+t^{2\ell}).
\end{align*}
\end{proposition}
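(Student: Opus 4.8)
The plan is to reduce $P_H(t)$ to a length-weighted count of permutations and then match that count to the combinatorics of the Hessenberg stair shape. First, since $X_H(e_{\theta})=\coprod_{x_w\in X_H(e_{\theta})^T}BwB/B$ (as in the proof of Proposition \ref{thickness}) and each Bruhat cell $BwB/B$ is an affine space of complex dimension $\ell(w)$, the variety has a cohomology basis indexed by $\{w\in S_n:x_w\in X_H(e_{\theta})^T\}$, the class of $w$ lying in degree $2\ell(w)$; hence $P_H(t)=\sum_w t^{2\ell(w)}$ over such $w$. By \eqref{Type A Fixed Points}, the admissible $w$ are exactly those with $i\leq h(j)$, where $i:=w^{-1}(1)$ and $j:=w^{-1}(n)$ denote the positions of the entries $1$ and $n$ in the one-line notation of $w$; note $i\neq j$. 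I would therefore split the sum according to the pair $(i,j)$.

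For a fixed admissible $(i,j)$, recall that $\ell(w)$ is the number of inversions of $w$, and separate these into the inversions involving the entry $1$ or the entry $n$ and those involving only the entries $2,\dots,n-1$. The first number depends only on $(i,j)$: the entry $1$ contributes one inversion for each position to its left, the entry $n$ contributes one for each position to its right, and these two families share exactly one inversion when $i>j$ and none when $i<j$, for a total of $\ell_0(i,j):=(i-1)+(n-j)-[i>j]$ (Iverson bracket). The remaining inversions are those of the sub-permutation formed by $2,\dots,n-1$ in the other $n-2$ positions; their count depends only on the relative order, so as that sub-permutation runs over all $(n-2)!$ possibilities the generating function is the Gaussian factorial $\prod_{k=1}^{n-2}(1+t^2+\cdots+t^{2(k-1)})=\prod_{\ell=1}^{n-3}(1+t^2+\cdots+t^{2\ell})$. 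This gives $P_H(t)=\big(\prod_{\ell=1}^{n-3}(1+t^2+\cdots+t^{2\ell})\big)\cdot\sum_{(i,j)}t^{2\ell_0(i,j)}$, the inner sum being over all $1\leq i\neq j\leq n$ with $i\leq h(j)$.

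It then remains to show $\sum_{(i,j)}t^{2\ell_0(i,j)}=q_H(t)$. I would fix $j$ and split the $i$-sum into $i<j$ (where $i\leq h(j)$ is automatic, since $h(j)\geq j$) and $j<i\leq h(j)$; a short computation shows the exponents $2\ell_0(i,j)$ then run through the consecutive even integers from $2(n-j)$ to $2(n+h(j)-j-2)$, the $-[i>j]$ correction being precisely what bridges the two sub-ranges. Summing over $j$, the coefficient of $t^{2d}$ equals $\#\{j:n-j\leq d\leq n+h(j)-j-2\}$, and comparing with Definition \ref{Definition of Diagonals} — rewritten as $q_H(d)=\#\{c:\max(1,n-d)\leq c\leq n,\ d\leq n+h(c)-c-2\}$, the extra grid constraint $c\leq 2n-2-d$ being automatic since $h(c)\leq n$ — yields exactly this count. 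Equivalently, one can exhibit a bijection sending an admissible $(i,j)$ to the stair-shape box $(i+1,j)$ when $i<j$ and to $(i,j)$ when $i>j$; the inequality $j\leq h(j)$ guarantees the image box lies in the stair shape, and it meets the diagonal indexed by $\ell_0(i,j)$.

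The one genuine obstacle is the bookkeeping in this last step: one must verify that the off-by-one from the case $i>j$ is absorbed so that the exponents form a consecutive run, and that the range conditions appearing in Definition \ref{Definition of Diagonals} are precisely those forced by admissibility of $(i,j)$ together with $h(j)\geq j$. Everything preceding it is routine given Propositions \ref{B-invariance}, \ref{Fixed Points}, and \ref{thickness} and the standard inversion generating function for $S_{n-2}$.
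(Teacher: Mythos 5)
Your proposal is correct and follows essentially the same route as the paper: sum $t^{2\ell(w)}$ over the fixed points given by \eqref{Type A Fixed Points}, split the inversions of $w$ into those involving the entries $1$ and $n$ (contributing $(i-1)+(n-j)-[i>j]$, exactly the paper's two cases) and those of the residual $S_{n-2}$ sub-permutation (contributing the factor $\prod_{\ell=1}^{n-3}(1+t^2+\cdots+t^{2\ell})$), and then match the remaining bivariate sum with $q_H(t)$ via the reindexing in Definition \ref{Definition of Diagonals}. The final bookkeeping you flag as the "one genuine obstacle" is carried out correctly in your own sketch (the consecutive run of exponents from $2(n-j)$ to $2(n+h(j)-j-2)$ and the substitution $c=n+1-j$), so there is no gap.
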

\begin{proof}
Since $X_H(e_{\theta})$ is a union of Schubert cells, the type $A$ fixed point criterion \eqref{Type A Fixed Points} implies that
\begin{align*}
P_H(t) = \sum_{\substack{w\in S_n \\ w^{-1}(1)\leq h(w^{-1}(n))}} t^{2\ell(w)}. 
\end{align*}
Writing $j=w^{-1}(n)$ and $k=w^{-1}(1)$, we have
\begin{align*}
P_H(t) 
&= \sum_{j=1}^n 
\Big( 
\sum_{k=1}^{j-1} \sum_{v\in S_{n-2}} 
t^{2\ell(v)+2(k-1+n-j)}
+ \sum_{k=j+1}^{h(j)}\sum_{v\in S_{n-2}} 
t^{2\ell(v)+2(k-1+n-j-1)}
\Big),
\end{align*}
which can be explained as follows. In the one-line notation for $w$, if the position of $1$ is to the left of the position of $n$ (so $1\leq k\leq j-1$), then 1 has $k-1$ inversion pairs and $n$ has $n-j$ inversion pairs. If the position of $1$ is to the right of the position of $n$ (so $j+1\leq k\leq h(j)$), then $1$ has $k-1$ inversion pairs (including the pair $(n,1)$) and $n$ has $n-j-1$ inversion pairs (except for the pair $(n,1)$, which is already counted).

Now, note that
\begin{align*}
\sum_{v\in S_{n-2}} 
t^{2\ell(v)} = \prod_{\ell=1}^{n-3}(1+t^2+\ldots+t^{2\ell}),
\end{align*}
as each polynomial is the Poincar\'{e} polynomial of $Flags(\mathbb{C}^{n-2})$.
Hence, a direct computation gives
\begin{align*}
P_H(t) 
&= \sum_{j=1}^n 
\sum_{k=2}^{h(j)} t^{2(k-2+n-j)}
\cdot \prod_{\ell=1}^{n-3}(1+t^2+\ldots+t^{2\ell}) \\
&= \sum_{j=1}^n 
\sum_{k=2}^{h(n+1-j)} t^{2(j+k-3)}
\cdot \prod_{\ell=1}^{n-3}(1+t^2+\ldots+t^{2\ell}),
\end{align*}
and the claim follows.
\end{proof}

Recall that for $n=5$ and $h=(2,4,5,5,5)$, we have $q_H(t)=1+2t^2+3t^4+4t^6+4t^8+2t^{10}$. 
In this case, Proposition \ref{Poincare Polynomial Proposition} yields the Poincar\'{e} polynomial 
\begin{align*}
P_H(t) & = (1+2t^2+3t^4+4t^6+4t^8+2t^{10})\cdot (1+t^2)(1+t^2+t^4)\\
& = 1+4t^2+9t^4+15t^6+20t^8+21t^{10}+16t^{12}+8t^{14}+2t^{16}.
\end{align*}

On another note, since $h=(1,2,\ldots,n)$ corresponds to the Hessenberg subspace $\mathfrak{b}$, we obtain the following specialization of Proposition \ref{Poincare Polynomial Proposition}.
\begin{corollary}
The Poincar\'{e} polynomial of our Springer fiber $X_{\mathfrak{b}}(e_{\theta})$ is given by
\begin{align*}
P_{\mathfrak{b}}(t) = (1+2t^2+3t^4+\ldots+(n-1)t^{2(n-2)}) \cdot\prod_{\ell=1}^{n-3}(1+t^2+\ldots+t^{2\ell}).
\end{align*}
\end{corollary}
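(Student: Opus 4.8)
The plan is to derive this as an immediate specialization of Proposition \ref{Poincare Polynomial Proposition}. The Hessenberg subspace $\mathfrak{b}$ corresponds to the Hessenberg function $h = (1, 2, \ldots, n)$, i.e.\ $h(j) = j$ for all $j$, since \eqref{Hessenberg Correspondence} gives precisely the upper-triangular matrices (together with the Cartan part) when $a_{ij} = 0$ for $i > j$. So it suffices to compute the polynomial $q_{\mathfrak b}(t) = \sum_{i=0}^{2n-3} q_{\mathfrak b}(i) t^{2i}$ in this case and then invoke Proposition \ref{Poincare Polynomial Proposition}.

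First I would compute $q_{\mathfrak b}(i)$ directly from Definition \ref{Definition of Diagonals}. With $h(j) = j$, the Hessenberg stair shape is the full upper-triangular region $\{(i,j) : i \leq j\}$, and
\[
q_{\mathfrak b}(i) = |\{(k,j) : 2 \leq k \leq h(n+1-j) = n+1-j,\ j + k - 3 = i\}|.
\]
For fixed $i$, the condition $j + k = i + 3$ together with $k \geq 2$, $k \leq n+1-j$, and $1 \leq j \leq n$ pins down the count: writing $k = i + 3 - j$, the constraint $k \leq n + 1 - j$ becomes $i + 3 \leq n + 1$, i.e.\ $i \leq n - 2$; for such $i$ the admissible values of $j$ range over $1 \leq j \leq i+1$ (from $k \geq 2$) intersected with $j \geq 1$, giving $q_{\mathfrak b}(i) = i + 1$ for $0 \leq i \leq n-2$ and $q_{\mathfrak b}(i) = 0$ for $i \geq n-1$. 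Hence
\[
q_{\mathfrak b}(t) = \sum_{i=0}^{n-2} (i+1) t^{2i} = 1 + 2t^2 + 3t^4 + \ldots + (n-1)t^{2(n-2)}.
\]
Alternatively, one can read this straight off the stair-shape diagram: for the triangular shape, the diagonal line segment computing $q_{\mathfrak b}(i)$ sits in rows $2$ through $i+2$ and meets exactly $i+1$ boxes of the triangle, as long as it does not run off the shape, which happens precisely for $i \leq n-2$.

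Substituting this $q_{\mathfrak b}(t)$ into the formula of Proposition \ref{Poincare Polynomial Proposition} yields the claimed expression for $P_{\mathfrak b}(t)$, completing the proof. I do not expect a genuine obstacle here; the only point requiring a little care is the bookkeeping on the range of $j$ (equivalently, verifying the diagonal segment stays inside the triangular region), which determines the cutoff at $i = n-2$ and hence the top degree $2(n-2)$ of $q_{\mathfrak b}(t)$.
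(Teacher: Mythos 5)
Your proposal is correct and matches the paper's approach: the corollary is obtained by specializing Proposition \ref{Poincare Polynomial Proposition} to $h=(1,2,\ldots,n)$, and your computation that $q_{\mathfrak{b}}(i)=i+1$ for $0\leq i\leq n-2$ and vanishes otherwise (hence $q_{\mathfrak{b}}(t)=1+2t^2+\cdots+(n-1)t^{2(n-2)}$) is exactly the bookkeeping the paper leaves implicit. No issues.
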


Additionally, Proposition \ref{Poincare Polynomial Proposition} allows one to deduce the following combinatorial formula for $\dim_{\mathbb{C}}(X_H(e_{\theta}))$.
{\begin{corollary}
The dimension of $X_H(e_{\theta})$ is given by
$$\dim_{\mathbb{C}}(X_H(e_{\theta}))=\frac{1}{2}(n-1)(n-2)+\emph{max}\ \{ h(j)-j \mid j=1,\ldots,n \}.$$
\end{corollary}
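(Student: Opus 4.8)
The plan is to extract $\dim_{\mathbb{C}}(X_H(e_{\theta}))$ directly from the Poincar\'{e} polynomial computed in Proposition~\ref{Poincare Polynomial Proposition}. Since $X_H(e_{\theta})$ is closed and $B$-invariant (Proposition~\ref{B-invariance}), it is a union of Schubert varieties; more precisely, by the type $A$ fixed point criterion \eqref{Type A Fixed Points} and the cell decomposition used in the proof of Proposition~\ref{thickness}, one has $X_H(e_{\theta})=\bigcup\overline{BwB/B}$, the union being over those $w\in S_n$ with $w^{-1}(1)\leq h(w^{-1}(n))$. As $\dim_{\mathbb{C}}\overline{BwB/B}=\ell(w)$, it follows that $\dim_{\mathbb{C}}(X_H(e_{\theta}))$ is the largest $\ell(w)$ occurring among these $w$, which is exactly half the degree of $P_H(t)$. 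Thus it suffices to prove $\deg P_H(t)=(n-1)(n-2)+2\max\{h(j)-j:1\leq j\leq n\}$.

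First I would handle the easy factor: $\deg\prod_{\ell=1}^{n-3}(1+t^2+\cdots+t^{2\ell})=2\sum_{\ell=1}^{n-3}\ell=(n-2)(n-3)$. It then remains to identify the top-degree term of $q_H(t)=\sum_{i=0}^{2n-3}q_H(i)t^{2i}$, i.e.\ the largest $i$ with $q_H(i)\neq 0$. Working from Definition~\ref{Definition of Diagonals}, $q_H(i)\neq 0$ precisely when there exist $j,k\in\{1,\ldots,n\}$ with $2\leq k\leq h(n+1-j)$ and $j+k-3=i$. Substituting $j'=n+1-j$ rewrites $j+k-3$ as $(n-2)+(k-j')$; for fixed $j'$ the largest admissible $k$ is $h(j')$, giving the value $(n-2)+(h(j')-j')$, and maximizing over $j'$ gives $(n-2)+\max_{j}\{h(j)-j\}$. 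Hence $\deg q_H(t)=2(n-2)+2\max_{j}\{h(j)-j\}$, and adding the two degrees yields $\deg P_H(t)=(n-2)(n-3)+2(n-2)+2\max_j\{h(j)-j\}=(n-1)(n-2)+2\max_j\{h(j)-j\}$. Dividing by $2$ gives the corollary.

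The only point demanding care --- and the step I expect to be the main obstacle --- is the proviso in the previous paragraph that the maximizing index $j'$ actually admits some $k$ with $k\geq 2$, since otherwise the diagonal in question meets no box in row $\geq 2$ and contributes nothing to $q_H$. This is where one uses that $h(j)\geq j$ for every $j$, so that $\max_j\{h(j)-j\}\geq 0$, together with $h(n)=n\geq 2$: the maximum is attained at an index where $h\geq 2$ whenever it is positive, and it is attained at $j=n$ (an admissible index) whenever it equals $0$. With this observation in place the remainder is a routine degree count, and one can sanity-check the formula against the worked example $n=5$, $h=(2,4,5,5,5)$, where it returns $\dim_{\mathbb{C}}=6+2=8$, matching the degree $16$ of the Poincar\'{e} polynomial displayed above.
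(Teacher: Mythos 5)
Your proposal is correct and follows exactly the route the paper intends: the corollary is stated as a direct consequence of Proposition~\ref{Poincare Polynomial Proposition}, and your argument simply carries out the degree count of $P_H(t)=q_H(t)\cdot\prod_{\ell=1}^{n-3}(1+t^2+\cdots+t^{2\ell})$ that the paper leaves implicit. The care you take with the edge case (that the maximizing index actually admits some $k\geq 2$, using $h(j)\geq j$ and $h(n)=n$) is a welcome detail, and the identification of $\dim_{\mathbb{C}}X_H(e_{\theta})$ with half the top degree via the Schubert cell paving is exactly right.
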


\subsection{Irreducible Components in Type $ADE$}\label{Subsection: Irreducible Components in Type ADE}

We now examine the irreducible components of $X_H(e_{\theta})$. As one might expect, these are precisely the maximal Schubert varieties $X(w):=\overline{BwB/B}$ contained in $X_H(e_{\theta})$. 

\begin{lemma}\label{Lemma: Irreducible Components of Closed B-Invariant Subvarieties}
The irreducible components of $X_H(e_{\theta})$ are the Schubert varieties $X(w)$ for the maximal $w\in W$ satisfying $w^{-1}\theta\in\Delta_{H}$.
\end{lemma}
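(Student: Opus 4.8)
The plan is to use the fact, established in Proposition~\ref{B-invariance}, that $X_H(e_\theta)$ is a closed $B$-invariant subvariety of $G/B$, together with the standard structure theory of such subvarieties. Recall that any closed $B$-invariant subvariety of $G/B$ is a union of Schubert cells $BwB/B$, and is therefore a union of Schubert varieties $X(w)=\overline{BwB/B}$. Concretely, by Proposition~\ref{Fixed Points} (or its type-$A$ incarnation) we have
\begin{align*}
X_H(e_\theta)=\coprod_{\substack{w\in W\\ w^{-1}\theta\in\Delta_H}} BwB/B,
\end{align*}
since the $T$-fixed points of a $B$-invariant subvariety detect exactly which Schubert cells it contains. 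Let $S:=\{w\in W: w^{-1}\theta\in\Delta_H\}$.

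First I would record the elementary but crucial monotonicity property: if $w\in S$ and $v\leq w$ in the Bruhat order, then $v\in S$. Equivalently, $S$ is a lower order ideal (down-set) in $(W,\leq)$. One way to see this is geometric: $X_H(e_\theta)$ is closed, so if it contains $BwB/B$ it contains $X(w)=\overline{BwB/B}=\coprod_{v\leq w}BvB/B$, hence every $v\leq w$ lies in $S$. (Alternatively this can be extracted from the explicit description of $\Delta_H$ as a $\mathfrak b$-invariant set of roots, but the closedness argument is cleaner and uses only what is already proved.) Consequently $X_H(e_\theta)=\bigcup_{w\in S} X(w)=\bigcup_{w\ \mathrm{maximal\ in}\ S} X(w)$, since every Schubert variety appearing is absorbed into $X(w)$ for some Bruhat-maximal $w\in S$.

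Next I would verify that this exhibits the irreducible decomposition. Each $X(w)$ is irreducible (it is the closure of the irreducible locally closed set $BwB/B$, which is an affine cell of dimension $\ell(w)$), and it is closed, so $X_H(e_\theta)=\bigcup_{w\ \mathrm{max}} X(w)$ is a finite union of irreducible closed subsets. To conclude these are precisely the irreducible components, it remains to check irredundancy: no $X(w)$ with $w$ maximal in $S$ is contained in $X(w')$ for another maximal $w'\in S$. But $X(w)\subseteq X(w')$ is equivalent to $w\leq w'$ in the Bruhat order, which for distinct maximal elements of $S$ is impossible. Hence the $X(w)$ for maximal $w\in S$ are exactly the irreducible components, which is the assertion of the lemma.

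The only real content is the down-set property of $S$, and I expect that to be the main (though modest) obstacle — really just the observation that closedness of $X_H(e_\theta)$ forces it to be a union of full Schubert varieties rather than merely Schubert cells; everything after that is the formal combinatorics of Bruhat order. If one prefers an argument intrinsic to $H$, one notes that the condition $w^{-1}\theta\in\Delta_H$ together with $\mathfrak b$-invariance of $H$ propagates down the Bruhat order via the subword characterization, but I would present the geometric version since Proposition~\ref{B-invariance} makes it immediate.
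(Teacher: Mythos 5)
Your argument is correct and essentially identical to the paper's: both decompose $X_H(e_\theta)$ into Schubert cells via Propositions~\ref{B-invariance} and~\ref{Fixed Points}, pass to the union of Schubert varieties $X(w)$ over $w$ with $w^{-1}\theta\in\Delta_H$, absorb into the Bruhat-maximal such $w$, and get irredundancy from the equivalence $X(u)\subseteq X(v)\Leftrightarrow u\leq v$. Your explicit remark that the index set is a lower order ideal is a slightly more detailed justification of a step the paper treats as immediate, but it is the same proof.
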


\begin{proof}
By Proposition 1.5 of \cite{Hartshorne}, our task is to prove the following two statements.
\begin{itemize}
\item[(i)] The variety $X_H(e_{\theta})$ is a union of the $X(w)$ for the maximal $w\in W$ satisfying $w^{-1}\theta\in\Delta_H$.
\item[(ii)] If $w_1\neq w_2$ are two such maximal elements, then neither $X(w_1)\subseteq X(w_2)$ nor $X(w_2)\subseteq X(w_1)$ holds.
\end{itemize}

Since $X_H(e_{\theta})$ is a union of Schubert varieties (see Proposition \ref{B-invariance}), one for each $T$-fixed point in $X_H(e_{\theta})$, Proposition \ref{Fixed Points} allows us to write \begin{equation}\label{Union of Schubert Varieties}
X_H(e_{\theta})=\bigcup_{w^{-1}\theta\in\Delta_H}X(w)
\end{equation}
Furthermore, as $u\leq v$ if and only if $X(u)\subseteq X(v)$, \eqref{Union of Schubert Varieties} still holds if the union is taken only over the maximal $w\in W$ satisfying $w^{-1}\theta\in\Delta_{H}$. Hence, (i) is true. Of course, the fact $u\leq v\Longleftrightarrow X(u)\subseteq X(v)$ also implies that (ii) is true.   
\end{proof}

Let us assume $G$ to be of type $ADE$. For $\beta\in\Delta$, Lemma 4.4 of \cite{Tymoczko-Dimensions} allows one to consider the unique maximal $w_{\beta}\in W$ for which $w_{\beta}^{-1}\theta=\beta$. In other words, 
\begin{align}\label{definition of w_beta}
w_{\beta}:=\text{max} \{w\in W\mid w^{-1}\theta=\beta\}.
\end{align}
Note that if $w^{-1}\theta\in\Delta_H$, then $w\leq w_{\beta}$ for some $\beta\in\Delta_H$ (e.g. take $\beta=w^{-1}\theta$). It follows that the maximal elements of $\{w\in W \mid w^{-1}\theta\in\Delta_H\}$ (the set discussed in Lemma \ref{Lemma: Irreducible Components of Closed B-Invariant Subvarieties}) are precisely the maximal elements of  
\begin{equation}\label{Equation: Definition of Omega_H}
\Omega_H:=\{w_{\beta}:\beta\in\Delta_H\}.
\end{equation}
Using Lemma \ref{Lemma: Irreducible Components of Closed B-Invariant Subvarieties}, it follows that the maximal elements of $\Omega_H$ label the irreducible components of $X_H(e_{\theta})$.
However, $\Omega_H$ may still contain non-maximal elements, and the determination of its maximal elements will involve a few properties of the $w_{\beta}$.
 
\begin{proposition}\label{Proposition: Same Sign}
Suppose that $\beta,\gamma\in\Delta$.
\begin{itemize}
\item[(i)] $\beta=\gamma\Longleftrightarrow w_{\beta}=w_{\gamma}$
\item[(ii)] If $\beta$ and $\gamma$ have the same sign, then $\beta\leq\gamma\Longleftrightarrow w_{\gamma}\leq w_{\beta}$.
\end{itemize}
\end{proposition}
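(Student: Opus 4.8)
The plan is to prove both statements by analyzing how $w_\beta$ depends on $\beta$ through the Bruhat order on $W$, using the factorization of $w \mapsto w^{-1}\theta$ through cosets of a stabilizer. Recall $\theta$ is a long root, and in type $ADE$ all roots are long, so $W$ acts transitively on $\Delta$. Let $W_\theta := \{w \in W : w^{-1}\theta = \theta\}$ be the stabilizer of $\theta$; this is the parabolic subgroup generated by the simple reflections fixing $\theta$. Then the map $w \mapsto w^{-1}\theta$ induces a bijection between right cosets $W_\theta \backslash W$ and $\Delta$. The element $w_\beta$ of \eqref{definition of w_beta} is by definition the maximum (in Bruhat order) of the coset $W_\theta w$ corresponding to $\beta$; such a maximum exists by Lemma 4.4 of \cite{Tymoczko-Dimensions}, as already invoked in the text, and in fact $w_\beta$ is the unique longest element in its coset, i.e. the maximal length coset representative for $W_\theta \backslash W$.

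Part (i) is then immediate: $\beta = \gamma$ iff the cosets coincide iff their distinguished (maximal) representatives coincide, i.e. $w_\beta = w_\gamma$. The forward direction of (ii) ($\beta \leq \gamma$, same sign, implies $w_\gamma \leq w_\beta$) is the substantive claim. First I would dispose of the case $\beta = \gamma$ via (i). For $\beta < \gamma$ with both positive (the both-negative case follows by applying the longest element $w_0$, which sends $\Delta_+ \leftrightarrow \Delta_-$, reverses $\leq$ on roots, and reverses Bruhat order, so the two sign cases are interchanged consistently), I would argue as follows. Since $\beta < \gamma$, there is a chain $\beta = \beta_0 < \beta_1 < \cdots < \beta_r = \gamma$ in which each step adds a single simple root, so it suffices to treat $\gamma = \beta + \alpha$ for a simple root $\alpha$. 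Here I would use the standard fact (e.g. from the theory of the $\theta$-grading, or a direct computation with reflections) relating the covering relation on roots to the action of simple reflections, together with the characterization of $w_\beta$ as the longest element $w$ with $w^{-1}\theta = \beta$: one shows $s_\alpha w_\gamma$ (or a suitable Bruhat-comparable modification) has the form $w^{-1}\theta = \beta$ after possibly absorbing into $W_\theta$, hence $w_\gamma$ is Bruhat-dominated by the longest such element $w_\beta$. Concretely, the cleanest route is to invoke the known length/inversion description: for the minimal-length coset representatives $w^\beta$ (so $w_\beta = w^\beta \cdot w_{0,\theta}$ where $w_{0,\theta}$ is the longest element of $W_\theta$), the assignment $\beta \mapsto \ell(w^\beta)$ is order-\emph{preserving} on positive roots while $\beta \mapsto w_\beta$ is order-\emph{reversing}, because $w_\beta$ is obtained from the \emph{maximal} representative; I would make this precise using that $w \leq w'$ in $W$ iff the set of inversions (negative roots sent to positive, suitably) of $w$ contains that of $w'$ when both are coset-maximal, and that raising $\beta$ shrinks the relevant inversion set of $w_\beta$.

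The converse in (ii) ($w_\gamma \leq w_\beta$ implies $\beta \leq \gamma$) I would obtain by applying $w^{-1}(-)\theta$-type monotonicity in reverse, or more cleanly by contraposition combined with (i): if $\beta \not\leq \gamma$ then (since roots of the same sign in a simply-laced root system are comparable whenever... — careful, they need not be), one instead argues directly that $w_\gamma \leq w_\beta$ forces $w_\gamma^{-1}\theta \leq w_\beta^{-1}\theta$ via the general principle that Bruhat-smaller elements move $\theta$ ``higher,'' which is exactly the order-reversing correspondence established in the forward direction applied to the distinguished representatives. I expect the main obstacle to be pinning down precisely this order-reversing dictionary between the Bruhat order on the maximal coset representatives $\{w_\beta\}$ and the root order on $\Delta$ — in particular verifying it survives the passage from minimal to maximal representatives of $W_\theta \backslash W$, and handling the same-sign hypothesis correctly (the statement restricts to $\beta, \gamma$ of the same sign precisely because the correspondence breaks across the positive/negative divide, where $w_\beta$ can jump in length). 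Everything else is bookkeeping with Bruhat order and the $w_0$-symmetry.
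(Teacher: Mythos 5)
Your part (i) is correct, though more roundabout than necessary: the paper simply observes that $w_{\beta}=w_{\gamma}$ forces $\beta=w_{\beta}^{-1}\theta=w_{\gamma}^{-1}\theta=\gamma$. For part (ii) the paper does not argue from scratch at all --- it points to the proof of Proposition 4.5 of \cite{Tymoczko-Dimensions} and to Proposition 3.2 of \cite{Stembridge} combined with the subword characterization of the Bruhat order --- whereas you attempt a self-contained proof, and that attempt has a genuine gap at exactly the step you yourself flag as ``the main obstacle.'' Having correctly reduced the forward implication to a cover $\gamma=\beta+\alpha$ with $\alpha$ simple (a legitimate reduction: saturated chains in the root poset between comparable same-sign roots do step by simple roots), you never actually prove $w_{\beta+\alpha}\leq w_{\beta}$. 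The mechanism you reach for --- that for coset-maximal representatives, $w\leq w'$ in Bruhat order is equivalent to containment of inversion sets --- is the characterization of the \emph{weak} order, not the Bruhat order; since $W_{\theta}\backslash W$ is the quasi-minuscule (adjoint) quotient rather than a minuscule one, the coincidence of the two orders on coset representatives cannot be taken for granted, and you give no argument for it. This implication is the entire content of the proposition, so the proof is incomplete.

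Two further problems. Your converse rests on ``the general principle that Bruhat-smaller elements move $\theta$ higher,'' which you then identify with ``the order-reversing correspondence established in the forward direction'' --- circular, since the forward direction is precisely what is missing. The principle itself is true and has a short direct proof: if $u'=us_{\mu}$ with $\mu\in\Delta_{+}$ and $\ell(u')>\ell(u)$, then $u\mu\in\Delta_{+}$ and $u'\theta=u\theta-c\,(u\mu)$ with $c=\tfrac{2(\theta,\mu)}{(\mu,\mu)}\geq 0$ because $\theta$ is dominant, so $u'\theta\leq u\theta$; applying this along a Bruhat chain from $w_{\gamma}^{-1}$ up to $w_{\beta}^{-1}$ gives $\beta=w_{\beta}^{-1}\theta\leq w_{\gamma}^{-1}\theta=\gamma$, with no same-sign hypothesis needed. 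Had you said this, the converse would be fine. Finally, your $w_{0}$-reduction of the both-negative case to the both-positive case silently converts maximal coset representatives into minimal ones (right multiplication by $w_{0}$ reverses the Bruhat order within each coset), so it additionally requires the true but unstated fact that Bruhat comparison of cosets of a parabolic subgroup can be tested on either the minimal or the maximal representatives.
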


\begin{proof}
For (i), the forward implication is clear. Conversely, if $w_{\beta}=w_{\gamma}$, then $\beta=w_{\beta}^{-1}\theta=w_{\gamma}^{-1}\theta=\gamma.$

For (ii), the result is explicitly stated in the proof of Proposition 4.5 of \cite{Tymoczko-Dimensions}. The result itself can be seen to follow from Proposition 3.2 of \cite{Stembridge}, together with the subword characterization of the Bruhat order.
\end{proof}

\begin{corollary}\label{Corollary: Reduction}
\begin{itemize}
\item[(i)] If $\alpha,\beta\in\Pi$ are distinct simple roots, then $w_{\alpha}$ and $w_{\beta}$ are incomparable in the Bruhat order.
\item[(ii)] If $\gamma$ and $\delta$ are distinct minimal elements of $\Delta_H^{-}$, then $w_{\gamma}$ and $w_{\delta}$ are incomparable in the Bruhat order.
\item[(iii)] Suppose that $\gamma\in\Delta_H$. If $w_{\gamma}$ is a maximal element of $\Omega_H$, then $\gamma\in\Pi$ or $\gamma$ is a minimal element of $\Delta_H^{-}$. 
\end{itemize}
\end{corollary}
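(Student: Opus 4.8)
The plan is to deduce all three statements from Proposition \ref{Proposition: Same Sign}, together with two elementary facts about the root poset on $\Delta$ fixed in Section \ref{Section: Notation}: first, that for $\beta,\gamma\in\Delta$ one has $\beta\leq\gamma$ exactly when $\gamma-\beta$ is a nonnegative integer combination of simple roots (this does \emph{not} require $\gamma-\beta$ itself to be a root); and second, that distinct simple roots are incomparable in this order, while any non-simple positive root $\gamma$ strictly dominates every simple root occurring in its expansion. The role of Proposition \ref{Proposition: Same Sign} is to transport comparability statements in $\Delta$ to comparability statements in the Bruhat order, and the role of Proposition \ref{Proposition: Same Sign}(i) is to guarantee strictness, i.e. that $\beta\neq\gamma$ forces $w_\beta\neq w_\gamma$.

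For (i), note that distinct $\alpha,\beta\in\Pi$ have the same sign (both positive) and are incomparable in $\Delta$: neither $\beta-\alpha$ nor $\alpha-\beta$ is a nonnegative combination of simple roots, since each has a coefficient equal to $-1$. Proposition \ref{Proposition: Same Sign}(ii) then converts this into the incomparability of $w_\alpha$ and $w_\beta$. The argument for (ii) is the same in outline: if $\gamma,\delta$ are distinct minimal elements of $\Delta_H^{-}$, they are both negative, hence of the same sign, and they are incomparable in $\Delta$ — for instance, were $\gamma\leq\delta$ with $\gamma\neq\delta$, then $\gamma$ would be an element of $\Delta_H^{-}$ strictly below $\delta$, contradicting minimality of $\delta$; symmetrically $\delta\not\leq\gamma$. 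Again Proposition \ref{Proposition: Same Sign}(ii) upgrades this to incomparability of $w_\gamma$ and $w_\delta$.

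For (iii), I would assume $w_\gamma$ is maximal in $\Omega_H$ and argue by contradiction, splitting on the sign of $\gamma$ (recall $\Delta_H=\Delta_{+}\cup\Delta_H^{-}$ because $H\supseteq\mathfrak{b}$). If $\gamma\in\Delta_{+}$ is not simple, choose a simple root $\alpha$ occurring in the expansion of $\gamma$; then $\alpha<\gamma$, both positive, so Proposition \ref{Proposition: Same Sign}(i)(ii) gives $w_\gamma<w_\alpha$, and since $\alpha\in\Pi\subseteq\Delta_H$ we have $w_\alpha\in\Omega_H$, contradicting maximality. If $\gamma\in\Delta_H^{-}$ is not minimal in $\Delta_H^{-}$, pick $\delta\in\Delta_H^{-}$ with $\delta<\gamma$; both are negative, so Proposition \ref{Proposition: Same Sign}(i)(ii) gives $w_\gamma<w_\delta$ with $w_\delta\in\Omega_H$, again contradicting maximality. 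Hence $\gamma\in\Pi$ or $\gamma$ is minimal in $\Delta_H^{-}$.

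The mathematical content is light; the only point that genuinely needs care is bookkeeping with the same-sign hypothesis in Proposition \ref{Proposition: Same Sign}(ii). This is exactly why part (iii) must be handled separately for positive and negative $\gamma$, and why in the positive case one compares $w_\gamma$ against the $w_\alpha$ for simple $\alpha$ — which lie below $\gamma$ and automatically belong to $\Omega_H$ — rather than against arbitrary negative Hessenberg roots, for which no monotonicity between the root order and the Bruhat order is available.
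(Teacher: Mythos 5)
Your proposal is correct and follows essentially the same route as the paper: parts (i) and (ii) are read off from Proposition \ref{Proposition: Same Sign}, and part (iii) is handled by splitting on the sign of $\gamma$ and using the order-reversal $\beta\leq\gamma\Longleftrightarrow w_\gamma\leq w_\beta$ for same-sign roots together with maximality in $\Omega_H$ (the paper phrases this directly rather than by contradiction, but the content is identical). Your explicit remark that $w_\alpha\in\Omega_H$ because $\Pi\subseteq\Delta_H$ is a small point the paper leaves implicit.
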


\begin{proof}
Recognizing (i) and (ii) as immediate consequences of Proposition \ref{Proposition: Same Sign}, we prove only (iii). To this end, if $\gamma$ is positive, then there exists $\alpha\in\Pi$ such that $\alpha\leq\gamma$. Proposition \ref{Proposition: Same Sign} implies that $w_{\gamma}\leq w_{\alpha}$, and the maximality of $w_{\gamma}$ then yields $w_{\gamma}=w_{\alpha}$. It follows that $\gamma=\alpha$ is simple.

Now, assume that $\gamma$ is negative and let $\delta\in\Delta_H^{-}$ satisfy $\delta\leq\gamma$. Proposition \ref{Proposition: Same Sign} implies $w_{\gamma}\leq w_{\delta}$. Since $w_{\gamma}$ is maximal, $w_{\gamma}=w_{\delta}$ and we conclude that $\gamma=\delta$. It follows that $\gamma$ is a minimal element of $\Delta_{H}^{-}$. 
\end{proof}

In light of Corollary \ref{Corollary: Reduction}, the maximal elements of $\Omega_H$ are of the following two types: 
\begin{enumerate}
\item $w_{\alpha}$, where $\alpha\in\Pi$ and $w_{\alpha}\nless w_{\gamma}$ for all $\gamma\in\Delta_H^{-}$, \footnote{Strictly speaking, Corollary \ref{Corollary: Reduction} gives the following different-looking description of the maximal $w_{\alpha}$'s: $w_{\alpha}$, where $\alpha\in\Pi$ and $w_{\alpha}\nless w_{\gamma}$ for all minimal $\gamma\in\Delta_H^{-}$. However, by appealing to Proposition \ref{Proposition: Same Sign}, one sees that this is equivalent to the description we have given.}
\item $w_{\gamma}$, where $\gamma$ is a minimal element of $\Delta_H^{-}$ and $w_{\gamma}\nless w_{\alpha}$ for all $\alpha\in\Pi$.
\end{enumerate}

In order to refine (2), we will need the following two results.

\begin{lemma}\label{Lemma: Simple Inequality}
If $\alpha\in\Pi$, then $w_{\alpha}<w_{-\alpha}$.
\end{lemma}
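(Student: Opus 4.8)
The plan is to exhibit a single explicit element of the set $\{w\in W : w^{-1}\theta=-\alpha\}$ that is strictly larger than $w_\alpha$ in the Bruhat order; since $w_{-\alpha}$ is by definition \eqref{definition of w_beta} the maximum of that set, the inequality $w_\alpha<w_{-\alpha}$ is then immediate. The candidate I would use is $w_\alpha s_\alpha$, and the argument reduces to two short verifications about it.

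First, I would check that $w_\alpha s_\alpha$ really lies in $\{w : w^{-1}\theta=-\alpha\}$: using $w_\alpha^{-1}\theta=\alpha$ and $s_\alpha^2=e$ one computes $(w_\alpha s_\alpha)^{-1}\theta = s_\alpha(w_\alpha^{-1}\theta)=s_\alpha(\alpha)=-\alpha$, so the maximality of $w_{-\alpha}$ gives $w_\alpha s_\alpha\le w_{-\alpha}$. Second, I would show $w_\alpha<w_\alpha s_\alpha$. For this I would invoke the standard length criterion for multiplication by a simple reflection: for $\alpha\in\Pi$ one has $\ell(ws_\alpha)=\ell(w)+1$ exactly when $w(\alpha)\in\Delta_{+}$. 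Rewriting $w_\alpha^{-1}\theta=\alpha$ as $w_\alpha(\alpha)=\theta$ and using that $\theta$ is a positive root, this yields $\ell(w_\alpha s_\alpha)=\ell(w_\alpha)+1$, hence $w_\alpha<w_\alpha s_\alpha$. Chaining the two steps gives $w_\alpha<w_\alpha s_\alpha\le w_{-\alpha}$, which is the claim.

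I do not expect a genuine obstacle here; the proof is only a few lines. The one point worth flagging is why Proposition \ref{Proposition: Same Sign} cannot simply be quoted: $\alpha$ and $-\alpha$ have opposite signs, so there is no order-reversing dictionary relating $w_\alpha$ and $w_{-\alpha}$ directly. The entire role of $s_\alpha$ is to absorb this change of sign ($s_\alpha(\alpha)=-\alpha$) at the cost of a single simple reflection, which keeps the Bruhat/length bookkeeping trivial. In writing this up I would only take care to use the conventions $w_\beta=\max\{w\in W : w^{-1}\theta=\beta\}$ and the left $W$-action on roots consistently, and to recall that the standing type $ADE$ assumption is precisely what guarantees $\{w : w^{-1}\theta=-\alpha\}$ is nonempty, so that $w_{-\alpha}$ is defined.
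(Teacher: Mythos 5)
Your proof is correct and is essentially the paper's own argument: both exhibit $w_\alpha s_\alpha$ as an element of $\{w : w^{-1}\theta=-\alpha\}$ (hence $\leq w_{-\alpha}$) and then verify $w_\alpha < w_\alpha s_\alpha$ via the standard length criterion for right multiplication by a simple reflection, the paper phrasing it as $(w_\alpha s_\alpha)\alpha=-\theta\in\Delta_-$ and you as $w_\alpha(\alpha)=\theta\in\Delta_+$, which are the same fact. No gaps.
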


\begin{proof}
Since $(w_{\alpha}s_{\alpha})^{-1}\theta=-\alpha$, the maximality of $w_{-\alpha}$ implies that \begin{equation}\label{First Inequality}w_{\alpha}s_{\alpha}\leq w_{-\alpha}.\end{equation} Furthermore, as $(w_{\alpha}s_{\alpha})\alpha=-\theta\in\Delta_{-}$, we have \begin{equation}\label{Second Inequality}w_{\alpha}<w_{\alpha}s_{\alpha}.\end{equation} By combining \eqref{First Inequality} and \eqref{Second Inequality}, we obtain the desired result.
%For me, the easiest way to see this last part is as follows. In principle, we know that $\ell(w_{-\alpha}s_{\alpha})=\ell(w_{-\alpha})-1$ or $\ell(w_{-\alpha}s_{\alpha})=\ell(w_{-\alpha})+1$. Using \eqref{Long Inequality}, we must have $\ell(w_{-\alpha}s_{\alpha})=\ell(w_{-\alpha})-1$. Examining \eqref{Long Inequality} again, we see that $\ell(w_{\alpha}s_{\alpha})>\ell(w_{-\alpha}s_{\alpha})=\ell(w_{-\alpha})-1$, and $\ell(w_{\alpha}s_{\alpha})\leq\ell(w_{-\alpha})$. Hence, $\ell(w_{\alpha}s_{\alpha})=\ell(w_{-\alpha})$, which together with $w_{\alpha}s_{\alpha}\leq w_{-\alpha}$ gives $w_{\alpha}s_{\alpha}=w_{-\alpha}$.
\end{proof}

\begin{corollary}\label{Corollary: Not Less}
If $\alpha\in\Pi$ and $\gamma\in\Delta_{-}$, then $w_{\gamma}\nless w_{\alpha}$.
\end{corollary}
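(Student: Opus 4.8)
The plan is to argue by contradiction, converting the cross-sign comparison $w_{\gamma}<w_{\alpha}$ into a comparison between two $w_{\beta}$'s \emph{of the same sign}, where Proposition~\ref{Proposition: Same Sign} and Corollary~\ref{Corollary: Reduction} are available. So fix $\gamma\in\Delta_{-}$ and $\alpha\in\Pi$, and suppose toward a contradiction that $w_{\gamma}<w_{\alpha}$.

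First I would produce an auxiliary simple root. Since $-\gamma\in\Delta_{+}$, write it as a nonnegative integer combination of simple roots and choose $\alpha'\in\Pi$ occurring with positive coefficient; then $(-\gamma)-\alpha'$ is again a nonnegative combination of simple roots, so $\gamma\leq-\alpha'$ in the root poset. Now $\gamma$ and $-\alpha'$ are both negative, so Proposition~\ref{Proposition: Same Sign}(ii) yields $w_{-\alpha'}\leq w_{\gamma}$.

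Next I would chain the inequalities. Combining $w_{-\alpha'}\leq w_{\gamma}$ with the assumed $w_{\gamma}<w_{\alpha}$ gives $w_{-\alpha'}<w_{\alpha}$. Lemma~\ref{Lemma: Simple Inequality} gives $w_{\alpha'}<w_{-\alpha'}$, hence $w_{\alpha'}<w_{\alpha}$. But $\alpha'$ and $\alpha$ are both simple, so Corollary~\ref{Corollary: Reduction}(i) forces $\alpha'=\alpha$ (otherwise $w_{\alpha'}$ and $w_{\alpha}$ would be Bruhat-incomparable), and then $w_{\alpha'}=w_{\alpha}$ by Proposition~\ref{Proposition: Same Sign}(i), contradicting $w_{\alpha'}<w_{\alpha}$. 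This contradiction establishes $w_{\gamma}\nless w_{\alpha}$.

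The only genuine subtlety — and the reason this does not follow directly from Proposition~\ref{Proposition: Same Sign} — is the sign mismatch between $\gamma\in\Delta_{-}$ and $\alpha\in\Delta_{+}$: the order-reversing dictionary between roots and their associated $w_{\beta}$'s is only valid within a single sign class. The essential ingredient bridging the two sign classes is precisely Lemma~\ref{Lemma: Simple Inequality}, the one place a cross-sign strict inequality $w_{\alpha'}<w_{-\alpha'}$ is known; everything else is bookkeeping in the root poset. I would also verify there is no circularity: both Lemma~\ref{Lemma: Simple Inequality} and Corollary~\ref{Corollary: Reduction} are established before Corollary~\ref{Corollary: Not Less} and do not depend on it.
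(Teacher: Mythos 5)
Your argument is correct, and it shares with the paper the one essential ingredient — Lemma~\ref{Lemma: Simple Inequality} as the bridge across the sign barrier — but it takes a longer path to the contradiction. The paper applies Lemma~\ref{Lemma: Simple Inequality} to the given simple root $\alpha$ itself: from $w_{\gamma}<w_{\alpha}$ and $w_{\alpha}<w_{-\alpha}$ it gets $w_{\gamma}<w_{-\alpha}$, a comparison between two \emph{negative} roots' elements, whence Proposition~\ref{Proposition: Same Sign} gives $-\alpha<\gamma$ in the root poset — impossible, since no negative root strictly exceeds the negative of a simple root. You instead manufacture an auxiliary simple root $\alpha'$ in the support of $-\gamma$, push the inequality down to $w_{\alpha'}<w_{-\alpha'}\leq w_{\gamma}<w_{\alpha}$, and conclude via the pairwise incomparability of the $w_{\alpha}$'s for distinct simple roots (Corollary~\ref{Corollary: Reduction}(i)). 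Both contradictions are sound and rest only on results established before this corollary, so there is no circularity; the paper's version simply avoids the auxiliary root and the appeal to Corollary~\ref{Corollary: Reduction}, landing the contradiction directly in the root poset in two lines. Your closing observation about why the statement cannot follow from Proposition~\ref{Proposition: Same Sign} alone — the order-reversal only holds within a sign class — is exactly the right diagnosis of where the content lies.
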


\begin{proof}
If $w_{\gamma}<w_{\alpha}$, then Lemma \ref{Lemma: Simple Inequality} implies that $w_{\gamma}<w_{-\alpha}$. Proposition \ref{Proposition: Same Sign} then yields $-\alpha<\gamma$, which is impossible.
\end{proof}

In light of the above, we have the following improved description of the maximal elements of $\Omega_H$:

\begin{enumerate}
	\item $w_{\alpha}$, where $\alpha\in\Pi$ and $w_{\alpha}\nless w_{\gamma}$ for all $\gamma\in\Delta_H^{-}$
	\item $w_{\gamma}$, where $\gamma$ is a minimal element of $\Delta_H^{-}$.
\end{enumerate}

%$X(w_{\gamma})\not\subset X(w_{\alpha})$ for $\gamma\in\min\Delta_H^{-}$ and $\alpha\in\Pi$.
Remembering that the maximal elements of $\Omega_H$ label the irreducible components of $X_H(e_{\theta})$, we have the following immediate corollary.

\begin{corollary}
If $\gamma$ is a minimal element of $\Delta_H^{-}$, then $X(w_{\gamma})$ is an irreducible component of $X_H(e_{\theta})$.
\end{corollary}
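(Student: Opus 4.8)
The plan is to read the statement off the classification of maximal elements of $\Omega_H$ obtained just above, combined with the fact recorded in and after Lemma~\ref{Lemma: Irreducible Components of Closed B-Invariant Subvarieties} that the maximal elements of $\Omega_H$ are exactly the labels of the irreducible components of $X_H(e_{\theta})$. So it suffices to show that if $\gamma$ is a minimal element of $\Delta_H^{-}$, then $w_{\gamma}$ is a maximal element of $\Omega_H=\{w_{\beta}:\beta\in\Delta_H\}$ (see \eqref{Equation: Definition of Omega_H}).

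To do this I would first note that $\mathfrak{b}\subseteq H$ forces $\Delta_{+}\subseteq\Delta_H$, hence $\Delta_H=\Delta_{+}\cup\Delta_H^{-}$, and then rule out $w_{\gamma}<w_{\beta}$ for every $\beta\in\Delta_H$. For $\beta\in\Delta_H^{-}$ with $\beta\neq\gamma$: by Proposition~\ref{Proposition: Same Sign}, $w_{\gamma}<w_{\beta}$ would force $\beta<\gamma$, contradicting the minimality of $\gamma$ in $\Delta_H^{-}$; and $w_{\gamma}<w_{\gamma}$ is impossible. For $\beta\in\Delta_{+}$: choose a simple root $\alpha\leq\beta$, so that Proposition~\ref{Proposition: Same Sign} gives $w_{\beta}\leq w_{\alpha}$, whence $w_{\gamma}<w_{\beta}$ would yield $w_{\gamma}<w_{\alpha}$, contradicting Corollary~\ref{Corollary: Not Less}. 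Thus $w_{\gamma}$ is maximal in $\Omega_H$, and therefore $X(w_{\gamma})$ is an irreducible component of $X_H(e_{\theta})$.

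I do not expect a genuine obstacle: the corollary is essentially bookkeeping on top of the structural results already established, and indeed the displayed ``improved description'' of the maximal elements of $\Omega_H$ already lists every $w_{\gamma}$ with $\gamma$ minimal in $\Delta_H^{-}$ among them. The only point that requires a line of care is the positive-root case above, where one must pass through an intermediate simple root $\alpha\leq\beta$ in order to invoke Corollary~\ref{Corollary: Not Less}.
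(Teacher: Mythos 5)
Your proposal is correct and follows essentially the same route as the paper: the paper reads the corollary off its ``improved description'' of the maximal elements of $\Omega_H$, which is obtained from exactly the ingredients you use (Proposition~\ref{Proposition: Same Sign} to handle competitors $w_{\beta}$ with $\beta$ negative, and Corollary~\ref{Corollary: Not Less} to handle those with $\beta$ positive). Your write-up merely unpacks the ``immediate'' step, including the correct observation that $\Delta_H=\Delta_{+}\cup\Delta_H^{-}$ and the reduction of a general positive $\beta$ to a simple $\alpha\leq\beta$.
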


Using (1) and (2), the next section gives a combinatorial enumeration of the maximal elements of $\Omega_H$ (and therefore also the irreducible components of $X_H(e_{\theta})$) in Lie type $A_{n-1}$.

\subsection{Complete Description of the Irreducible Components in Type $A$}\label{Subsection: Complete Description of the Irreducible Components in Type $A$}

Let $G=\SL_n(\mathbb{C})$ and assume all notation to be as presented in \ref{Section: Examples in Type A} and \ref{Section: Poincare Polynomials in Type A}. For $\beta=t_i-t_j\in\Delta$, \eqref{Highest Root}, \eqref{Type A Fixed Points}, and \eqref{definition of w_beta}} imply that $w_{\beta}\in S_n$ is the longest permutation satisfying $w_{\beta}(i)=1$ and $w_{\beta}(j)=n$. 
If $\alpha=t_{j-1}-t_j$ is a simple root, we have $w_{\alpha}(j-1)=1$ and $w_{\alpha}(j)=n$, i.e. the one-line notation for $w_{\alpha}$ is
\[
w_{\alpha}=\cdots \ 1 \ n \ \cdots
\]
where $1$ is in the $(j-1)$-st position, $n$ is in the $j$-th position, and the rest of the ordered sequence $w_{\alpha}(1),\dots,w_{\alpha}(j-2), w_{\alpha}(j+1),\dots,w_{\alpha}(n)$ is given by $n-1,n-2,\dots,3,2$.
For $\gamma=t_k-t_{\ell} \ (k>\ell)$ a negative root, the one-line notation for $w_{\gamma}$ is 
\[w_{\gamma}=\cdots \ n \ \cdots \ 1 \ \cdots,
\]
where $n$ is in the $\ell$-th position and $1$ is in the $k$-th position. 

Continuing with our specialization to type $A_{n-1}$, we will need to introduce the \textit{modified Hessenberg function} and the \textit{modified Hessenberg stair shape}. To this end, let $h$ be the Hessenberg function corresponding to the Hessenberg subspace $H\subseteq\mathfrak{sl}_n(\mathbb{C})$. We define a function $\overline{h}:\{1,2,\dots,n\}\rightarrow \{1,2,\dots,n\}$ by
\begin{equation}\label{def of modified function}
\overline{h}(j) := 
\begin{cases}
h(j)-1(=j-1) \quad &\text{if $h(j-1)=j-1$ and $h(j)=j$}, \\
h(j) &\text{otherwise},
\end{cases}
\end{equation}
and we call $\overline{h}$  the \textit{modified Hessenberg function}.
Note that while $\overline{h}$ is weakly increasing, it might not be an honest Hessenberg function. 

As with a Hessenberg function, one can consider the stair-shaped sub-grid 
$$\{(i,j)\in\{1,2,\ldots,n\}\times\{1,2,\ldots,n\}\}:i\leq \overline{h}(j)\},$$ called the \textit{modified Hessenberg stair shape} (see Figure \ref{modified stair-shap}).  

\begin{figure}[h]
\begin{picture}(350,130)(0,-130)
\put(25,0){\line(0,-1){120}}
\put(40,0){\line(0,-1){120}}
\put(55,0){\line(0,-1){120}}
\put(70,0){\line(0,-1){120}}
\put(85,0){\line(0,-1){120}}
\put(100,0){\line(0,-1){120}}
\put(115,0){\line(0,-1){120}}
\put(130,0){\line(0,-1){120}}
\put(145,0){\line(0,-1){120}}
\put(25,0){\line(1,0){120}}
\put(25,-15){\line(1,0){120}}
\put(25,-30){\line(1,0){120}}
\put(25,-45){\line(1,0){120}}
\put(25,-60){\line(1,0){120}}
\put(25,-75){\line(1,0){120}}
\put(25,-90){\line(1,0){120}}
\put(25,-105){\line(1,0){120}}
\put(25,-120){\line(1,0){120}}
%%%
\put(225,0){\line(0,-1){120}}
\put(240,0){\line(0,-1){120}}
\put(255,0){\line(0,-1){120}}
\put(270,0){\line(0,-1){120}}
\put(285,0){\line(0,-1){120}}
\put(300,0){\line(0,-1){120}}
\put(315,0){\line(0,-1){120}}
\put(330,0){\line(0,-1){120}}
\put(345,0){\line(0,-1){120}}
\put(225,0){\line(1,0){120}}
\put(225,-15){\line(1,0){120}}
\put(225,-30){\line(1,0){120}}
\put(225,-45){\line(1,0){120}}
\put(225,-60){\line(1,0){120}}
\put(225,-75){\line(1,0){120}}
\put(225,-90){\line(1,0){120}}
\put(225,-105){\line(1,0){120}}
\put(225,-120){\line(1,0){120}}
%%%
%%%
\linethickness{0.5mm}
\put(25,0){\line(0,-1){30}}
\put(25,-30){\line(1,0){30}}
\put(55,-30){\line(0,-1){15}}
\put(55,-45){\line(1,0){15}}
\put(70,-45){\line(0,-1){30}}
\put(70,-75){\line(1,0){15}}
\put(85,-75){\line(0,-1){15}}
\put(85,-90){\line(1,0){30}}
\put(115,-90){\line(0,-1){15}}
\put(115,-105){\line(1,0){15}}
\put(130,-105){\line(0,-1){15}}
\put(130,-120){\line(1,0){15}}
%%%
\put(225,0){\line(0,-1){30}}
\put(225,-30){\line(1,0){30}}
\put(255,-30){\line(1,0){15}}
\put(270,-30){\line(0,-1){15}}
\put(270,-45){\line(0,-1){30}}
\put(270,-75){\line(1,0){15}}
\put(285,-75){\line(0,-1){15}}
\put(285,-90){\line(1,0){30}}
\put(315,-90){\line(1,0){15}}
\put(330,-90){\line(0,-1){15}}
\put(330,-105){\line(1,0){15}}
\put(345,-105){\line(0,-1){15}}
%%%
%%%
%\put(232.5,-22.5){\circle*{5}}
%\put(277.5,-67.5){\circle*{5}}
%\put(292.5,-82.5){\circle*{5}}
%\put(337.5,-97.5){\circle*{5}}
\end{picture}
\caption{The Hessenberg stair shape and the resulting modified Hessenberg stair shape for $n=8$ and $h=(2,2,3,5,6,6,7,8)$}
\label{modified stair-shap}
\end{figure}
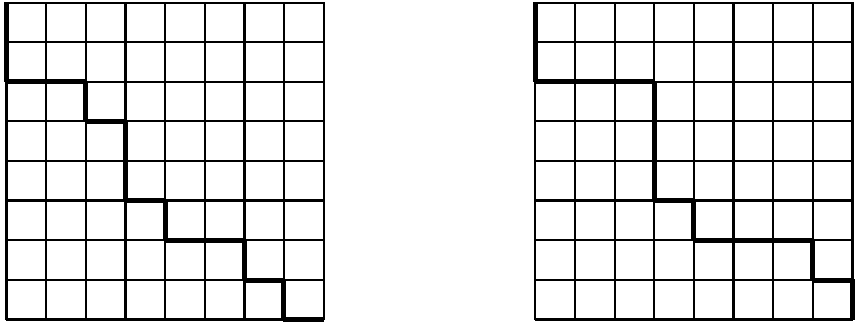

\begin{lemma}\label{Lemma: Negative Hessenberg Root Condition}
If $t_k-t_{\ell}\in\Delta_-$, then 
\[
t_k-t_{\ell} \in \Delta^-_H \quad \text{if and only if} \quad k\leq \overline{h}(\ell).
\]
\end{lemma}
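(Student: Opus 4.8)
The plan is to unwind the definition of $\overline{h}$ and translate the condition $t_k - t_\ell \in \Delta_H^-$ into the language of Hessenberg functions, treating separately the two branches of the piecewise definition \eqref{def of modified function}. First I would record the elementary fact that, since $H$ is a Hessenberg subspace with Hessenberg function $h$, a negative root $t_k - t_\ell$ (with $k > \ell$) has its root space in $H$ if and only if $k \leq h(\ell)$; this is immediate from \eqref{Hessenberg Correspondence}, reading off that the $(k,\ell)$-entry is allowed to be nonzero precisely when $k \leq h(\ell)$. So the real content is to show that for $k > \ell$ the conditions $k \leq h(\ell)$ and $k \leq \overline{h}(\ell)$ are equivalent.

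The only situation in which $\overline{h}(\ell) \neq h(\ell)$ is the first case of \eqref{def of modified function}, namely $h(\ell - 1) = \ell - 1$ and $h(\ell) = \ell$, in which case $\overline{h}(\ell) = \ell - 1$. So I would split into two cases. In the ``otherwise'' case $\overline{h}(\ell) = h(\ell)$ and there is nothing to prove. In the first case, $h(\ell) = \ell$, so the condition $k \leq h(\ell)$ with the standing assumption $k > \ell$ is simply never satisfied; meanwhile $\overline{h}(\ell) = \ell - 1 < \ell < k$, so $k \leq \overline{h}(\ell)$ also fails. Thus both conditions are false and the equivalence holds vacuously. This disposes of the lemma.

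I would then make sure to phrase the argument so that the role of the peculiar first branch of $\overline{h}$ is visible: the modification only decreases $h$ at indices $\ell$ where $h(\ell) = \ell$ (so there is no negative Hessenberg root in column $\ell$ anyway), hence it never changes the truth value of $k \leq h(\ell)$ for $k > \ell$. The point is that $\overline{h}$ is designed to match $h$ exactly on the data that detects negative Hessenberg roots, while differing only where that data is empty.

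The main obstacle, such as it is, is purely bookkeeping: one must be careful that the hypothesis $t_k - t_\ell \in \Delta_-$ is being used as $k > \ell$ (not $k \geq \ell$), so that in the first branch $k \leq \ell < \ell + 1$ forces $k \leq \ell$, i.e. $k \le h(\ell)$ fails; and one must not accidentally invoke that $\overline{h}$ is a genuine Hessenberg function, since the text explicitly warns it may fail to be one. No deeper input — no Weyl group combinatorics, no geometry — is needed here; the lemma is a direct translation step that will be used later to read off the negative Hessenberg roots, and hence the maximal elements $w_\gamma$ of $\Omega_H$, from the modified stair shape.
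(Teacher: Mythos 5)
Your proposal is correct and follows essentially the same route as the paper: both reduce the lemma to showing that $k\leq h(\ell)$ and $k\leq\overline{h}(\ell)$ are equivalent for $k>\ell$, and both rest on the observation that $\overline{h}$ differs from $h$ only at indices $\ell$ with $h(\ell)=\ell$, where both inequalities fail vacuously. Your case split on the two branches of \eqref{def of modified function} is just a cosmetic repackaging of the paper's two-implication argument.
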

\begin{proof} 
The condition $t_k-t_{\ell} \in \Delta^-_H$ is equivalent to $\mathfrak{g}_{\alpha}\subset H$, where $\alpha=t_k-t_{\ell}$.
Also, the latter condition is equivalent to $k\leq h(\ell)$ via the correspondence \eqref{Hessenberg Correspondence} between Hessenberg functions and Hessenberg subspaces.
Thus it suffices to show that for $k>\ell$, the condition $k\leq h(\ell)$ is equivalent to $k\leq \overline{h}(\ell)$.

Suppose that $k\leq \overline{h}(\ell)$. Since $\overline{h}(\ell)\leq h(\ell)$ (by the definition of $\overline{h}$), we have $k\leq h(\ell)$. 
Conversely, if $k\leq h(\ell)$, then the assumption $k>\ell$ gives $h(\ell)>\ell$. In other words, $h(\ell)\geq \ell+1$, implying that $h(\ell)=\overline{h}(\ell)$. We thus have $k\leq \overline{h}(\ell)$.
\end{proof}

\begin{definition}\label{Definition: Corner}
For $i,j\in\{1,2,\ldots,n\}$, we shall call $(i,j)$ a \textit{corner} of the modified Hessenberg stair shape if $i=\overline{h}(j)$ and $\overline{h}(j-1)<\overline{h}(j)$, with the convention $\overline{h}(0):=0$. 
\end{definition}

\begin{lemma}\label{lemma: corner is off-diagonal}
If $(i,j)$ is a corner of the modified Hessenberg stair shape, then $\overline{h}(j)\neq j$. 
\end{lemma}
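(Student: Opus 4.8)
The plan is to argue by contradiction: suppose $(i,j)$ is a corner of the modified Hessenberg stair shape with $\overline{h}(j)=j$. Since $i=\overline{h}(j)=j$, the corner condition $\overline{h}(j-1)<\overline{h}(j)$ reads $\overline{h}(j-1)<j$, i.e. $\overline{h}(j-1)\leq j-1$. I would first unwind the definition \eqref{def of modified function} of $\overline{h}$ at $j$. The equality $\overline{h}(j)=j$ must come from the ``otherwise'' case, since the first case gives $\overline{h}(j)=j-1\neq j$; hence $h(j)=j$ as well, and we are \emph{not} in the situation where $h(j-1)=j-1$ and $h(j)=j$ both hold. Because $h(j)=j$, this forces $h(j-1)\neq j-1$; combined with the Hessenberg constraints $j-1\leq h(j-1)\leq h(j)=j$, we conclude $h(j-1)=j$.

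Next I would examine $\overline{h}(j-1)$ using $h(j-1)=j$. Evaluating \eqref{def of modified function} at $j-1$: the first case would require $h(j-2)=j-2$ and $h(j-1)=j-1$, but $h(j-1)=j\neq j-1$, so again we land in the ``otherwise'' case and $\overline{h}(j-1)=h(j-1)=j$. This directly contradicts the corner hypothesis $\overline{h}(j-1)\leq j-1$ established above. That completes the proof.

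The only genuinely delicate point — and the step I would be most careful with — is the case analysis in the definition of $\overline{h}$: one must track that the condition triggering the ``$h(j)-1$'' branch at index $j$ involves both $h(j-1)$ and $h(j)$, and correctly deduce from $\overline{h}(j)=j$ that $h(j)=j$ while the twin condition on $h(j-1)$ fails. A boundary subtlety worth a sentence is the case $j=1$: then the convention $\overline{h}(0)=0$ makes the corner condition automatic, but $\overline{h}(1)=h(1)$ and, since $h$ is an honest Hessenberg function with $1\leq h(1)$, having $\overline{h}(1)=1$ forces $h(1)=1$; one should check the argument still goes through (it does, vacuously, since there is no index $j-1=0$ in the domain — alternatively one notes $(1,1)$ being a corner with $\overline{h}(1)=1$ cannot occur because the modification at $j=1$ never decreases $h(1)=1$, so this degenerate scenario must be handled or excluded explicitly). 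I would phrase the main argument for $j\geq 2$ and dispatch $j=1$ in one line.
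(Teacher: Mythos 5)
Your argument is correct and follows essentially the same route as the paper's own proof: from $\overline{h}(j)=j$ you deduce $h(j)=j$, hence $h(j-1)\neq j-1$, hence $h(j-1)=j$ and therefore $\overline{h}(j-1)=j$, contradicting the corner condition $\overline{h}(j-1)<\overline{h}(j)$. The extra attention to the case analysis in \eqref{def of modified function} and the $j=1$ boundary is fine but not a substantive departure.
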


\begin{proof}
If $\overline{h}(j)=j$, then in particular $\overline{h}(j)\neq j-1$. The definition \eqref{def of modified function} then implies $h(j)=\overline{h}(j)=j$. Now, \eqref{def of modified function} and $\overline{h}(j)\neq j-1$ also imply that the two conditions $h(j-1)=j-1$ and $h(j)=j$ cannot hold simultaneously. We therefore have $h(j-1)\neq j-1$, which together with $j-1\leq h(j-1)\leq h(j)=j$ implies $h(j-1)=j$. So we have both $\overline{h}(j-1)=j$ and $\overline{h}(j)=j$, and it follows that $(i,j)$ cannot be a corner.
\end{proof}

%\textcolor{red}{Note that if $(i,j)$ is a corner of the modified stair-shape, then we have $\overline{h}(j)\neq j$ for any $j=1,\dots,n$. This is because if $\overline{h}(j)=j$, then we must have $h(j)=j(=h(j))$ from the definition of $\overline{h}(j)$, and this implies $h(j-1)=j(=h(j))$ again from the definition of $\overline{h}(j)$, and hence $(i,j)$ cannot be a corner.
%if $(i,j)$ is a corner and $\overline{h}(j)=j$, then we have $\overline{h}(j-1)<\overline{h}(j)=j$, but $\overline{h}(j)=j$ implies $h(j)=j$ or $h(j)=j+1$. If $h(j)=j+1$, then $\overline{h}(j)=j+1$, a contradiction. If $\overline{h}(j)=j$, then we have $\overline{h}(j)=h(j)$ and we must have $h(j-1)=j$ which shows $\overline{h}(j-1)=j$, a contradiction.

\begin{lemma}\label{Lemma: contribution from simple roots}
For a simple root $\alpha=t_{j-1}-t_j$ $(2\leq j\leq n)$, we have that
$(j-1,j)$ is a corner of the modified Hessenberg stair shape if and only if $w_{\alpha}\not<w_{\gamma}$ for all $\gamma\in\Delta_H^{-}$.
\end{lemma}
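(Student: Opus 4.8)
## Proof Proposal

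The plan is to unwind both sides of the asserted equivalence into explicit combinatorial conditions on the one-line notation of permutations, and then match them against the corner condition from Definition \ref{Definition: Corner}. Fix the simple root $\alpha = t_{j-1}-t_j$. From the discussion preceding this lemma, $w_\alpha$ is the longest permutation with $w_\alpha(j-1)=1$ and $w_\alpha(j)=n$; its one-line notation places $1$ in position $j-1$, $n$ in position $j$, and the remaining values $n-1,n-2,\ldots,2$ in the remaining positions in decreasing order. Likewise, for a negative root $\gamma = t_k - t_\ell$ with $k>\ell$, the permutation $w_\gamma$ is the longest permutation with $n$ in position $\ell$ and $1$ in position $k$. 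By Lemma \ref{Lemma: Negative Hessenberg Root Condition}, the negative Hessenberg roots $\gamma = t_k-t_\ell \in \Delta_H^-$ are exactly those with $k \leq \overline{h}(\ell)$, so the right-hand side of the lemma becomes a statement quantified over pairs $(\ell,k)$ with $\ell < k \leq \overline{h}(\ell)$.

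Next I would translate the Bruhat comparison $w_\alpha < w_\gamma$ into something checkable. The cleanest route is to use Proposition \ref{Proposition: Same Sign}: since $\alpha$ is positive and $\gamma$ is negative, I cannot directly compare the roots, but I can invoke Corollary \ref{Corollary: Not Less} and Lemma \ref{Lemma: Simple Inequality} to note $w_\gamma \nless w_\alpha$ always holds, so the only possible relation is $w_\alpha < w_\gamma$ (or incomparability). To decide whether $w_\alpha < w_\gamma$ I would compare their lengths and their positions of the entries $1$ and $n$: both $w_\alpha$ and $w_\gamma$ are the \emph{longest} permutations with $1$ and $n$ in prescribed positions, so by the description of such ``longest'' permutations one has $w_\alpha \leq w_\gamma$ precisely when the position-pair $(\text{pos of }1,\text{pos of }n)$ for $w_\alpha$, namely $(j-1,j)$, is ``dominated'' in the appropriate sense by that for $w_\gamma$, namely $(k,\ell)$ — concretely, I expect the condition to be $\ell \leq j$ and $j-1 \leq k$, i.e. $\ell \le j \le k+1$ with $k > \ell$, together possibly with a length check. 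So $w_\alpha < w_\gamma$ for some $\gamma \in \Delta_H^-$ iff there exist $\ell < k$ with $k \leq \overline{h}(\ell)$, $\ell \leq j$, and $k \geq j-1$.

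Then it remains to show that the negation of that existence statement is exactly the corner condition ``$(j-1,j)$ is a corner,'' i.e. $\overline{h}(j) = j-1$ and $\overline{h}(j-1) < j-1$. Here I would argue both directions: if $(j-1,j)$ is a corner, then $\overline{h}(j)=j-1$, so for any $\ell \le j$ we have (using that $\overline{h}$ is weakly increasing) $\overline{h}(\ell) \leq \overline{h}(j) = j-1$, forcing $k \leq j-1$; combined with $k \geq j-1$ this gives $k=j-1$, and then $\ell < k = j-1$ together with $\overline{h}(\ell) \geq k = j-1$ contradicts $\overline{h}(j-1) < j-1$ (since $\overline{h}(\ell)\le\overline{h}(j-1)$). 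Conversely, if $(j-1,j)$ is not a corner, I would produce an explicit $\gamma$: either $\overline{h}(j) \geq j$, in which case $t_j - t_{j-1}$ or $t_{\overline{h}(j)}-t_{j-1}$ — wait, I need $k>\ell$ — take $\ell = j-1$, $k = j \le \overline{h}(j-1)$? I would take the pair realizing the failure of cornerhood and check it lands in the region $\ell \le j \le k+1$, $k \le \overline{h}(\ell)$; Lemma \ref{lemma: corner is off-diagonal} and the definition \eqref{def of modified function} of $\overline{h}$ will be needed to handle the boundary cases cleanly.

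The main obstacle I anticipate is pinning down the precise Bruhat-order criterion ``$w_\alpha < w_\gamma$'' in terms of the four numbers $j-1, j, \ell, k$ — in particular getting the inequalities and the strictness exactly right, and confirming there is no subtlety from the equal-length or equal-position degenerate cases (which is presumably why Lemma \ref{lemma: corner is off-diagonal}, guaranteeing $\overline{h}(j)\ne j$ at corners, was proved just beforehand). Once that criterion is established, the remaining matching with the corner condition is a short finite case analysis using monotonicity of $\overline{h}$ and the definition of the modified Hessenberg function.
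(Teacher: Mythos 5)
Your overall strategy is the same as the paper's: write $w_{\alpha}$ and $w_{\gamma}$ in one-line notation, reduce the Bruhat comparison $w_{\alpha}<w_{\gamma}$ for $\gamma=t_k-t_{\ell}\in\Delta_H^{-}$ to the positional condition $\ell\leq j$ and $j-1\leq k$, and then play this off against the corner condition using the monotonicity of $\overline{h}$. Your forward direction (corner implies no such $\gamma$) is essentially the paper's argument and is correct as written. However, the central Bruhat criterion is only ``expected'' in your write-up, and it is the crux of the lemma: you need both halves of it. The half used in the forward direction ($w_{\alpha}<w_{\gamma}$ forces $\ell\leq j$ and $j-1\leq k$) is what the paper proves, by observing that $j<\ell$ would require transposing $n$ to the right (length-decreasing) to pass between the two permutations, and $k<j-1$ would require transposing $1$ to the left (also length-decreasing); the other half (that the ``Case (ii)'' configuration $\ell\leq j$, $j-1\leq k$, $k>\ell$ implies $w_{\alpha}<w_{\gamma}$) is indispensable for the converse and must also be justified, not merely conjectured.

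The second and more serious gap is the converse, which you leave visibly unfinished. You attempt the contrapositive --- if $(j-1,j)$ is not a corner, exhibit $\gamma\in\Delta_H^{-}$ with $w_{\alpha}<w_{\gamma}$ --- but you do not succeed in naming the witness, and the difficulty you hit is structural: failure of cornerhood can come either from $\overline{h}(j)\neq j-1$ or from $\overline{h}(j-1)=\overline{h}(j)=j-1$, and the latter case forces you to examine column $j-2$, where the definition \eqref{def of modified function} of $\overline{h}$ makes the bookkeeping awkward. The paper sidesteps this by arguing directly with the unmodified function $h$: assuming no $\gamma\in\Delta_H^{-}$ satisfies $w_{\alpha}<w_{\gamma}$, it shows $h(j-2)=j-2$, $h(j-1)=j-1$, and $h(j)=j$ (for each $m\in\{j-2,j-1,j\}$, if $h(m)>m$ then $t_{h(m)}-t_m$ is a negative Hessenberg root in the Case (ii) configuration, a contradiction), and then reads off $\overline{h}(j-1)=j-2<j-1=\overline{h}(j)$ from \eqref{def of modified function}. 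Note in particular that the corner condition $\overline{h}(j-1)<\overline{h}(j)$ requires controlling $h$ at $j-2$, not just at $j-1$ and $j$; your plan to ``take the pair realizing the failure of cornerhood'' does not account for this.
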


\begin{proof}
To begin, assume that $(j-1,j)$ is a corner. 
By definition, we have $\overline{h}(j)=j-1$. 
%This is because if $\overline{h}(j)\neq j-1$ we have $\overline{h}(j)>j$ by the definition of the modified Hessenberg function, and $(j-1,j)$ being a corner implies that $\alpha=t_{j-1}-t_j=t_{\overline{h}(j)}-t_j$ cannot be a positive root.
%So we have $\overline{h}(j)=j-1$ as claimed. 
Suppose in addition that there exists $\gamma=t_k-t_{\ell}\in\Delta_{H}^{-}$ satisfying $w_{\alpha}< w_{\gamma}$.
Remembering the descriptions of $w_{\alpha}$ and $w_{\gamma}$ from the beginning of this section, we have the following three cases.

(i) $j< \ell$ :
\begin{align*}
w_{\alpha}&=\cdots \ 1 \ n \ \cdots\cdots\cdots\cdots\cdot, \\
w_{\gamma}&=\cdots\cdots\cdots \ n \ \cdots \ 1 \ \cdots.
\end{align*}

(ii) $\ell\leq j$ and $j-1\leq k$ :
\begin{align*}
w_{\alpha}&=\cdots\cdots\cdots \ 1 \ n \ \cdots\cdots\cdot, \\
w_{\gamma}&=\cdots\cdots \ n \ \cdots\cdots \ 1 \ \cdots.
\end{align*}

(iii) $k< j-1$ :
\begin{align*}
w_{\alpha}&=\cdots\cdots\cdots\cdots \ 1 \ n \ \cdots\cdot, \\
w_{\gamma}&=\cdots \ n \ \cdots \ 1 \ \cdots\cdots\cdots.
\end{align*}
Case (i) cannot occur, since transposing $n$ to its right is a length-decreasing process.
Similarly, Case (iii) cannot occur, since transposing $1$ to its left is length-decreasing.
Hence, we must have $\ell\leq j$ and $j-1\leq k$. However, as $t_k-t_{\ell}$ is a negative root, one of these inequalities is strict. Hence, by Lemma \ref{Lemma: Negative Hessenberg Root Condition} 
\begin{align}\label{eq:ineq contr}
j-1\leq k\leq\overline{h}(\ell)\leq\overline{h}(j)=j-1.
\end{align}
It follows that $j-1=k$, so that $\ell<j$ is our strict inequality. However, since $(j-1,j)$ is a corner, we have that $\overline{h}(j-1)<\overline{h}(j)$. Hence, our inequality $\ell<j$ implies that $\overline{h}(\ell)<\overline{h}(j)$.
%However, since $(\overline{h}(j),j)$ is a corner, one of these inequalities must be strict. 
This contradicts \eqref{eq:ineq contr}, completing the first half of our proof.

We now prove the converse. Suppose that there is no $\gamma\in\Delta_{H}^{-}$ satisfying $w_{\alpha}<w_{\gamma}$. 
We claim that 
\[
 h(j-2)= j-2, \quad 
 h(j-1)= j-1,\quad\text{and}\quad
 h(j)= j,
\]
with the convention $h(0):=0$.
The first of these can be proved as follows. Since the case of $j=2$ is clear, we can assume $j\geq 3$. If $h(j-2)\geq j-1$, then $\gamma:=t_{h(j-2)}-t_{j-2}=t_{\overline{h}(j-2)}-t_{j-2}$ is a negative Hessenberg root by Lemma \ref{Lemma: Negative Hessenberg Root Condition}, and $w_{\alpha}<w_{\gamma}$ since we are in Case (ii). So $h(j-2)= j-2$ follows. The same argument proves $h(j-1)= j-1$ and $h(j)= j$.
Now from the definition of $\overline{h}$, we obtain
\[\overline{h}(j-1)=j-2 \quad \text{and} \quad \overline{h}(j)=j-1.
\]
Hence $(j-1, j)$ is a corner of the modified Hessenberg stair shape.
%Now, assume that $(j-1,j)$ is not a corner of the modified stair shape. Then by definition, $j-1<\overline{h}(j)$ or $\overline{h}(j-1)=\overline{h}(j)$. In the former case, we have $\overline{h}(j)>j$, and hence $\gamma:=t_{\overline{h}(j)}-t_{j}$ is a negative Hessenberg root. We must have $w_{\alpha}<w_{\gamma}$, since $\alpha=t_{j-1}-t_j$ and $\overline{h}(j)>j$. This is a contradiction. In the latter case, $\overline{h}(j-1)=\overline{h}(j)$ implies $\overline{h}(j-1)> j-1$. So, $\gamma':=t_{\overline{h}(j-1)}-t_{j-1}$ is negative Hessenberg root. However, $w_{\alpha}<w_{\gamma}$ since $\alpha=t_{j-1}-t_j$ and $\overline{h}(j-1)>j-1$. This is a contradiction.
\end{proof}

Now, recall the definition of $\Omega_H$ from \eqref{Equation: Definition of Omega_H}, as well as the description of the maximal elements of $\Omega_H$ given at the end of \ref{Subsection: Irreducible Components in Type ADE}. With these considerations in mind, Lemma \ref{Lemma: contribution from simple roots} may be restated in the following way: If $\alpha=t_{j-1}-t_j$ is a simple root, then $w_{\alpha}$ is a maximal element of $\Omega_H$ if and only if $(j-1,j)$ is a corner of the modified Hessenberg stair shape. This is consistent with the following more complete description of the maximal elements of $\Omega_H$ in type $A$.
We remind the reader that a corner of the modified Hessenberg stair shape cannot lie on the diagonal (see Lemma \ref{lemma: corner is off-diagonal}).

\begin{proposition}\label{Proposition: Correspondence between corners and maximal elements}
For $\beta=t_i-t_j\in\Delta$,
%If $i,j\in\{1,2,\ldots,n\}$ with $i\neq j$ and $\beta=t_i-t_j$, then 
$w_{\beta}$ is a maximal element of $\Omega_H$ if and only if $(i,j)$ is a corner of the modified Hessenberg stair shape.
\end{proposition}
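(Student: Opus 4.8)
The plan is to reduce the statement to the two-type classification of maximal elements of $\Omega_H$ recalled at the end of \ref{Subsection: Irreducible Components in Type ADE}, namely: (1) $w_\alpha$ for $\alpha\in\Pi$ with $w_\alpha\not<w_\gamma$ for all $\gamma\in\Delta_H^-$, and (2) $w_\gamma$ for $\gamma$ a minimal element of $\Delta_H^-$. Since every $\beta\in\Delta$ in type $A_{n-1}$ is of the form $t_i-t_j$ with $i<j$ (so $\beta\in\Pi$ when $j=i+1$, and otherwise $\beta\in\Delta_+\setminus\Pi$), and $\gamma\in\Delta_-$ is of the form $t_k-t_\ell$ with $k>\ell$, I will treat positive and negative $\beta$ separately and match each against the corner condition, invoking Lemma \ref{lemma: corner is off-diagonal} to rule out diagonal boxes throughout.

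First, for $\beta$ positive: if $\beta=t_i-t_j$ with $j>i+1$, then $\beta$ is not simple, so $w_\beta$ is not of type (1) and hence not maximal; correspondingly I must check $(i,j)$ is not a corner, which follows because a corner forces $i=\overline h(j)$, and since $\overline h$ is weakly increasing with $\overline h(j)\le h(j)$... actually the cleanest route is: a corner at $(i,j)$ with $i<j$ and $i=\overline h(j)$ would require $\overline h(j)<j$, i.e. $\overline h(j)=j-1$ by the definition of $\overline h$ (the only way $\overline h$ drops below the diagonal), forcing $j=i+1$. So corners only occur in columns $j$ with $\overline h(j)\in\{j-1\}\cup\{j+1,\dots\}$, and an off-diagonal corner above the diagonal sits at $(i,j)$ with $i>j$. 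Thus for positive non-simple $\beta$ there is no corner, matching non-maximality. For $\beta=\alpha=t_{j-1}-t_j$ simple, Lemma \ref{Lemma: contribution from simple roots} already gives exactly the equivalence ``$w_\alpha$ maximal $\iff$ $(j-1,j)$ is a corner'', so this case is done.

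Second, for $\beta=t_i-t_j\in\Delta_-$, i.e. $i>j$: by type (2), $w_\beta$ is maximal iff $\beta$ is a minimal element of $\Delta_H^-$ (there is no competing constraint against simple roots, by Corollary \ref{Corollary: Not Less}). By Lemma \ref{Lemma: Negative Hessenberg Root Condition}, $t_i-t_j\in\Delta_H^-$ iff $i\le\overline h(j)$, and minimality of $t_i-t_j$ in the root order among negative Hessenberg roots means: there is no $t_{i'}-t_{j'}\in\Delta_H^-$ with $t_{i'}-t_{j'}<t_i-t_j$. I will unwind the root order $t_{i'}-t_{j'}\le t_i-t_j \iff i'\le i$ and $j'\ge j$ (for negative roots in type $A$), so minimality says precisely that $t_i-t_j\in\Delta_H^-$ but neither $t_{i-1}-t_j$ nor $t_i-t_{j+1}$ lies in $\Delta_H^-$ (covering the two covers downward). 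Via Lemma \ref{Lemma: Negative Hessenberg Root Condition} this translates to: $i\le\overline h(j)$, and $i-1\not\le\overline h(j)$ is automatic-free — rather, the binding conditions are $i=\overline h(j)$ (from not being able to increase $i$) and $i>\overline h(j+1)$, i.e. $\overline h(j+1)<\overline h(j)$... I must be careful with the direction: increasing the ``height'' $i$ makes the root larger, decreasing $i$ makes it smaller, so minimality in the order among Hessenberg roots forbids $t_{i-1}-t_j\in\Delta_H^-$ only if $i-1<$ something — actually $t_{i-1}-t_j < t_i-t_j$, so we need $t_{i-1}-t_j\notin\Delta_H^-$, giving $i-1$ is \emph{not} $\le\overline h(j)$... this contradicts $i\le\overline h(j)$ unless we recheck: the correct covering relations going \emph{down} from $t_i-t_j$ are $t_{i+1}-t_j$ (note $i+1>i$ so this is... no). Let me restate: for negative roots $t_k-t_\ell$ ($k>\ell$) written in $\sum n_\alpha\alpha$ form, $t_k-t_\ell=-(\alpha_\ell+\cdots+\alpha_{k-1})$, so $t_k-t_\ell\le t_{k'}-t_{\ell'}\iff \{\ell,\dots,k-1\}\supseteq\{\ell',\dots,k'-1\}\iff \ell\le\ell'$ and $k\ge k'$. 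Hence the elements \emph{below} $t_i-t_j$ are those with larger ``top'' $j'\ge j$... I will carry out this bookkeeping carefully and conclude that minimal negative Hessenberg roots correspond exactly to off-diagonal corners $(i,j)$ of the modified stair shape via $i=\overline h(j)$ together with $\overline h(j-1)<\overline h(j)$ (Definition \ref{Definition: Corner}), which is precisely what's needed.

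The main obstacle I anticipate is the second part: correctly matching the \emph{minimality of $\gamma$ in $\Delta_H^-$ under the root partial order} with the \emph{corner condition $\overline h(j-1)<\overline h(j)$}, because the root order involves \emph{two} coordinates $(i,j)$ while a corner condition as stated in Definition \ref{Definition: Corner} only compares consecutive \emph{columns}. I expect the resolution is that $\overline h$ weakly increasing forces the ``same column, smaller $i$'' direction to be automatic (any $t_{i'}-t_j\in\Delta_H^-$ with $i'<i$ would still be Hessenberg, so minimality forces $i=\overline h(j)$ to be impossible unless... no — minimality forbids smaller roots, and $t_{i'}-t_j$ with $i'<i$ \emph{is} smaller, hence forces no such root, i.e. $i$ must be minimal such that $t_i-t_j\in\Delta_H^-$, but every $t_{i'}-t_j$ with $\ell<i'\le\overline h(\ell)$... ). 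I will need to reconcile this by noting the partial order's covers going down from $t_i-t_j$ are $t_i-t_{j+1}$ and $t_{i-1}-t_j$ \emph{only when these are still roots}, then use $\overline h$ weakly increasing plus Lemma \ref{lemma: corner is off-diagonal} to kill the degenerate diagonal cases; once the correct covering relations are pinned down the equivalence should fall out by direct translation through Lemma \ref{Lemma: Negative Hessenberg Root Condition} and Definition \ref{Definition: Corner}. I would also double-check the boundary columns $j=1$ and $j=n$ and the convention $\overline h(0):=0$ to be sure no corner is miscounted.
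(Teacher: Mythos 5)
Your overall strategy coincides with the paper's: reduce to the two-type description of the maximal elements of $\Omega_H$ from the end of \ref{Subsection: Irreducible Components in Type ADE}, dispatch the simple-root case with Lemma \ref{Lemma: contribution from simple roots}, observe that a corner $(i,j)$ with $i<j$ forces $i=\overline{h}(j)=j-1$ (so positive non-simple roots never arise), and match minimal elements of $\Delta_H^{-}$ with off-diagonal corners. The first three items are handled correctly. The genuine gap is in the fourth, which is the real content of the proposition for negative roots, and which you leave unresolved after several reversals of direction. Moreover, the covering relations you settle on at the end are backwards: for a negative root $t_i-t_j=-(\alpha_j+\cdots+\alpha_{i-1})$ with $i>j$, the covers going \emph{down} are $t_{i+1}-t_j$ and $t_i-t_{j-1}$, obtained by subtracting $\alpha_i$ or $\alpha_{j-1}$ --- not $t_{i-1}-t_j$ and $t_i-t_{j+1}$ as you wrote. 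Had you translated minimality through your stated covers, you would have required $t_{i-1}-t_j\notin\Delta_H^{-}$, which contradicts $i\leq\overline{h}(j)$ and would absurdly imply that $\Delta_H^{-}$ has no minimal elements at all.

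With the correct orientation the argument does close, and it is essentially what the paper does (phrased without covers). Your own displayed criterion $t_k-t_\ell\leq t_{k'}-t_{\ell'}\iff \ell\leq\ell'$ and $k\geq k'$ shows that the roots strictly below $t_i-t_j$ are the $t_{i'}-t_{j'}$ with $i'\geq i$, $j'\leq j$, $(i',j')\neq(i,j)$. Since $\overline{h}$ is weakly increasing, Lemma \ref{Lemma: Negative Hessenberg Root Condition} shows $\Delta_H^{-}$ is closed upward in this order, so minimality of $t_i-t_j$ in $\Delta_H^{-}$ is equivalent to $t_{i+1}-t_j\notin\Delta_H^{-}$ and $t_i-t_{j-1}\notin\Delta_H^{-}$ (whenever these are roots), i.e.\ $i+1>\overline{h}(j)$ and $i>\overline{h}(j-1)$; combined with $i\leq\overline{h}(j)$ this is exactly $i=\overline{h}(j)$ and $\overline{h}(j-1)<\overline{h}(j)$, the corner condition of Definition \ref{Definition: Corner} (the convention $\overline{h}(0)=0$ and the boundary $i=n$ cause no trouble, as you anticipated). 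The paper runs the same computation more directly: if $i<\overline{h}(j)$ then $t_{\overline{h}(j)}-t_j$ is a strictly smaller negative Hessenberg root, and if $\overline{h}(j-1)\geq\overline{h}(j)=i$ then $t_i-t_{j-1}$ is one. Until this bookkeeping is carried out with the correct orientation, your proof of the negative-root case is incomplete.
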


\begin{proof}
	To prove the backward implication, assume that $(i,j)$ is a corner. We shall distinguish between the cases $\overline{h}(j)=j-1$ and $\overline{h}(j)\neq j-1$. In the former, $(i,j)$ being a corner implies that $i=\overline{h}(j)=j-1$ (so $j-1\geq 1$). In particular, $\beta=t_i-t_j=t_{j-1}-t_j$ is a simple root. Lemma \ref{Lemma: contribution from simple roots} then implies that $w_{\beta}\nless w_{\gamma}$ for all $\gamma\in\Delta_H^{-}$. By the discussion at the end of \ref{Subsection: Irreducible Components in Type ADE}, $w_{\beta}$ is a maximal element of $\Omega_H$.
	
	For our second case, suppose that $\overline{h}(j)\neq j-1$. Since $(i,j)$ is a corner, Lemma \ref{lemma: corner is off-diagonal} implies that $\overline{h}(j)> j$. Again, since $(i,j)$ is a corner, $i=\overline{h}(j)$. In particular, $i>j$ and $t_{i}-t_{j}$ is a negative Hessenberg root. As $(i,j)$ is a corner with $i>j$, an application of Lemma \ref{Lemma: Negative Hessenberg Root Condition} establishes that $t_i-t_j$ is a minimal element of $\Delta_H^{-}$. The discussion at the end of \ref{Subsection: Irreducible Components in Type ADE} then shows that $w_{\beta}$ is a maximal element of $\Omega_H$.
	
	We now prove the forward implication. Firstly, assume that $\beta=t_i-t_j$ is simple (so $i=j-1$). By Lemma \ref{Lemma: contribution from simple roots}, $(i,j)=(j-1,j)$ is a corner of the modified Hessenberg stair shape.
	
	Secondly, assume that $\beta=t_i-t_j$ is a minimal element of $\Delta_{H}^-$ (so $i> j\geq 1$). We have $\overline{h}(j)=i$, since $t_{\overline{h}(j)}-t_j$ would otherwise be a strictly less than $t_i-t_j$. A similar argument establishes that $\overline{h}(j-1) <\overline{h}(j)$ must also hold, so that $(i,j)$ is a corner.  
\end{proof}

As noted earlier, the irreducible components of $X_H(e_{\theta})$ correspond to the maximal elements of $\Omega_H$. Noting that these maximal elements are described in Proposition \ref{Proposition: Correspondence between corners and maximal elements}, the following theorem gives the irreducible components of $X_H(e_{\theta})$ in Lie type $A_{n-1}$

\begin{theorem}\label{Theorem: Bijective Correspondence in Type A}
In type $A_{n-1}$, there is a bijective correspondence between the set of corners of the modified Hessenberg stair shape and the set of irreducible components of $X_H(e_{\theta})$ given by
\[
(\overline{h}(j), j) \mapsto X(w_j)=\overline{Bw_j B/B},
\]
where $w_j$ is the longest permutation satisfying $w_j(\overline{h}(j))=1$ and $w_j(j)=n$.
\end{theorem}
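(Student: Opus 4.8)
\section*{Proof proposal}

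The plan is to read the theorem as a repackaging of results already established in \ref{Subsection: Irreducible Components in Type ADE} and earlier in this subsection, so that the proof amounts to chaining together three bijections. First I would recall, from Lemma \ref{Lemma: Irreducible Components of Closed B-Invariant Subvarieties} together with the discussion following \eqref{Equation: Definition of Omega_H}, that $w\mapsto X(w)$ restricts to a bijection from the set of maximal elements of $\Omega_H$ onto the set of irreducible components of $X_H(e_{\theta})$. That this is a genuine bijection and not merely a surjection uses the standard fact $u\leq v\Longleftrightarrow X(u)\subseteq X(v)$, which forces $X(u)=X(v)\Longrightarrow u=v$.

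Next I would use Proposition \ref{Proposition: Correspondence between corners and maximal elements} to produce a bijection between corners of the modified Hessenberg stair shape and maximal elements of $\Omega_H$. Here one point needs care: by Definition \ref{Definition: Corner} a corner is determined by its column index $j$, since its row index is forced to equal $\overline{h}(j)$; consequently distinct corners $(\overline{h}(j),j)$ and $(\overline{h}(j'),j')$ sit in distinct columns and yield distinct pairs, hence distinct roots $\beta=t_{\overline{h}(j)}-t_j$. These are honest roots because Lemma \ref{lemma: corner is off-diagonal} guarantees $\overline{h}(j)\neq j$, so \eqref{definition of w_beta} applies to each such $\beta$. Proposition \ref{Proposition: Same Sign}(i) then shows the associated $w_\beta$ are pairwise distinct, giving injectivity of $(\overline{h}(j),j)\mapsto w_{\beta}$; and Proposition \ref{Proposition: Correspondence between corners and maximal elements}, combined with the fact that every element of $\Omega_H$ has the form $w_{\beta}$ for some $\beta\in\Delta_H$, gives surjectivity onto the maximal elements of $\Omega_H$.

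Finally I would identify the composite bijection with the stated map by unwinding the definition of $w_{\beta}$. Using the translation $w^{-1}\theta=t_{w^{-1}(1)}-t_{w^{-1}(n)}$ recalled at the beginning of this subsection, the equation $w_{\beta}^{-1}\theta=\beta=t_{\overline{h}(j)}-t_j$ says precisely that $w_{\beta}(\overline{h}(j))=1$ and $w_{\beta}(j)=n$, while \eqref{definition of w_beta} says $w_\beta$ is the longest such permutation; that is, $w_{\beta}=w_j$. Composing the bijection of the second paragraph with that of the first then yields exactly $(\overline{h}(j),j)\mapsto X(w_j)=\overline{Bw_jB/B}$, as claimed.

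I do not expect a real obstacle, since the substantive content has already been extracted in the preceding propositions; the only delicate points are the bookkeeping in the second paragraph (corners biject with a subset of columns, so the assignment $(\overline{h}(j),j)\mapsto w_{t_{\overline{h}(j)}-t_j}$ is well defined and injective) and the invocation of Lemma \ref{lemma: corner is off-diagonal} to ensure $t_{\overline{h}(j)}-t_j$ is a legitimate root.
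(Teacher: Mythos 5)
Your proposal is correct and follows essentially the same route as the paper, which presents the theorem as an immediate consequence of Lemma \ref{Lemma: Irreducible Components of Closed B-Invariant Subvarieties}, the identification of the relevant maximal elements with those of $\Omega_H$, and Proposition \ref{Proposition: Correspondence between corners and maximal elements}, together with the explicit one-line description of $w_{\beta}$ given at the start of the subsection. Your write-up merely makes the chaining of these bijections (and the minor well-definedness points, such as the appeal to Lemma \ref{lemma: corner is off-diagonal}) explicit where the paper leaves them implicit.
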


Let us implement Theorem \ref{Theorem: Bijective Correspondence in Type A} in the context of a specific example. Indeed, recall that Figure \ref{modified stair-shap} includes the modified Hessenberg stair shape determined by $h=(2,2,3,5,6,6,7,8)$ when $n=8$. The corners are $(2,1)$, $(5,4)$, $(6,5)$, and $(7,8)$, as is indicated in the following diagram.

\begin{figure}[h]
	\begin{picture}(350,130)(120,-130)
	%%%
	\put(225,0){\line(0,-1){120}}
	\put(240,0){\line(0,-1){120}}
	\put(255,0){\line(0,-1){120}}
	\put(270,0){\line(0,-1){120}}
	\put(285,0){\line(0,-1){120}}
	\put(300,0){\line(0,-1){120}}
	\put(315,0){\line(0,-1){120}}
	\put(330,0){\line(0,-1){120}}
	\put(345,0){\line(0,-1){120}}
	\put(225,0){\line(1,0){120}}
	\put(225,-15){\line(1,0){120}}
	\put(225,-30){\line(1,0){120}}
	\put(225,-45){\line(1,0){120}}
	\put(225,-60){\line(1,0){120}}
	\put(225,-75){\line(1,0){120}}
	\put(225,-90){\line(1,0){120}}
	\put(225,-105){\line(1,0){120}}
	\put(225,-120){\line(1,0){120}}
	%%%
	%%%
	\linethickness{0.5mm}
	\put(225,0){\line(0,-1){30}}
	\put(225,-30){\line(1,0){30}}
	\put(255,-30){\line(1,0){15}}
	\put(270,-30){\line(0,-1){15}}
	\put(270,-45){\line(0,-1){30}}
	\put(270,-75){\line(1,0){15}}
	\put(285,-75){\line(0,-1){15}}
	\put(285,-90){\line(1,0){30}}
	\put(315,-90){\line(1,0){15}}
	\put(330,-90){\line(0,-1){15}}
	\put(330,-105){\line(1,0){15}}
	\put(345,-105){\line(0,-1){15}}
	%%%
	%%%
	\put(232.5,-22.5){\circle*{5}}
	\put(277.5,-67.5){\circle*{5}}
	\put(292.5,-82.5){\circle*{5}}
	\put(337.5,-97.5){\circle*{5}}
	\end{picture}
	\caption{The modified Hessenberg stair shape for $h=(2,2,3,5,6,6,7,8)$ with dots labeling corners}
	\label{corners of modified stair-shap}
\end{figure}
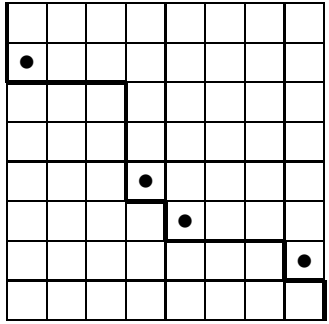

By Theorem \ref{Theorem: Bijective Correspondence in Type A}, the irreducible components of $X_H(e_{\theta})$ are the Schubert varieties $X(w)$ for the following elements $w\in S_8$:  
\[
8\ 1\ 7\ 6\ 5\ 4\ 3\ 2, \quad 
7\ 6\ 5\ 8\ 1\ 4\ 3\ 2, \quad
7\ 6\ 5\ 4\ 8\ 1\ 3\ 2, \quad
7\ 6\ 5\ 4\ 3\ 2\ 1\ 8.
\]

\section{GKM Theory on $X_H(e_{\theta})$}\label{Section: GKM Theory on Minimal Nilpotent Hessenberg Varieties}
We devote this section to the construction and examination of a GKM variety structure (see \cite{Goresky}) on $X_H(e_{\theta})$. Let us begin by reviewing the relevant parts of GKM theory. 
\subsection{Brief Review of GKM Theory}\label{Section: Brief Review of GKM Theory} Let $X$ be a complex projective variety acted upon algebraically by $T$, where $T$ is the maximal torus fixed in \ref{Section: Notation}. One calls $X$ a \textit{GKM variety} when the following conditions are satisfied.
\begin{itemize}
\item[(i)] $X^T$ is finite.
\item[(ii)] $X$ has finitely many one-dimensional $T$-orbits.
\item[(iii)] If $Y\subseteq X$ is a one-dimensional $T$-orbit, then $\overline{Y}$ is $T$-equivariantly isomorphic to $\mathbb{P}^1$ with the $T$-action $t\cdot[x_1:x_2]=[\alpha(t)x_1:x_2]$ for some non-zero weight $\alpha\in X^*(T)$.\footnote{Note that this weight is only determined up to sign.}
\item[(iv)] $X$ is $T$-equivariantly formal, meaning that the spectral sequence of the natural fibration $X\rightarrow X_T\rightarrow BT$ collapses on its second page. 
\end{itemize}

Let us assume this to be the case. Now, write $X^T=\{x_1,x_2,\ldots,x_n\}$ with $x_i\neq x_j$ for $i\neq j$. Suppose that $i\neq j$ and that $x_i,x_j\in \overline{Y_{ij}}$ for some one-dimensional $T$-orbit $Y_{ij}\subseteq X$. In this case, we shall write $i\leftrightarrow j$. Note that $\overline{Y_{ij}}$ is acted upon by $T$ with some non-zero weight $\alpha_{ij}\in X^*(T)$, as in (iii).

The restriction map $$H_T^*(X)\rightarrow H_T^*(X^T)=\bigoplus_{i=1}^nH_T^*(\{x_i\})=\bigoplus_{i=1}^nH_T^*(\text{pt})$$ is injective, and its image is precisely \begin{equation}\label{Image Description}H_T^*(X)\cong\{(f_1,f_2,\ldots,f_n)\in\bigoplus_{i=1}^nH_T^*(\text{pt}):\alpha_{ij}\vert(f_i-f_j)\text{ whenever }i\leftrightarrow j\}.\end{equation} The image description \eqref{Image Description} is naturally encoded in an edge-labelled graph, called the \textit{GKM graph} of $X$. This graph has vertex set $\{1,2,\ldots,n\}$, with $i$ and $j$ connected by an edge if and only if $i\leftrightarrow j$ as defined above. In this case, the edge in question is given the label $\alpha_{ij}$.\footnote{As mentioned earlier, this edge label is well-defined only up to sign. However, the image \eqref{Image Description} is clearly unaffected by the choice of sign.}

At a later stage, it will be convenient to have the following definition at our disposal.

\begin{definition}\label{GKM Subvariety}
A closed subvariety $Z\subseteq X$ is called a \textit{GKM subvariety} if $Z$ is $T$-invariant and is itself a GKM variety with respect to the $T$-action. Equivalently, $Z$ is a GKM subvariety if $Z$ is $T$-invariant and $T$-equivariantly formal.
\end{definition}

We note that if $Z\subseteq X$ is a GKM subvariety, the GKM graph of $Z$ is canonically a sub-(labelled) graph of the GKM graph of $X$.   

\subsection{The GKM Graph of $G/B$}\label{Section: The GKM Graph of G/B}
It will be advantageous to briefly review the GKM variety structure on $G/B$ with its usual $T$-action. Having presented $(G/B)^T$ in \eqref{T-Fixed Points in G/B}, it just remains to describe the one-dimensional $T$-orbits in $G/B$. Given $\alpha\in\Delta_{+}$, denote by $SL_2(\mathbb{C})_{\alpha}\subseteq G$ the root subgroup with Lie algebra
$\mathfrak{g}_{-\alpha}\oplus[\mathfrak{g}_{-\alpha},\mathfrak{g}_{\alpha}]\oplus\mathfrak{g}_{\alpha}\subseteq\mathfrak{g}$. We define $Y_{e,\alpha}$ to be the $\SL_2(\mathbb{C})_{\alpha}$-orbit of $x_e$ in $G/B$, where $e\in W$ is the identity element. For an arbitrary element $w\in W$, write $w=[g]$ for $g\in N_G(T)$. We define $Y_{w,\alpha}$ to be the left $g$-translate of $Y_{e,\alpha}$, namely $$Y_{w,\alpha}:=gY_{e,\alpha}.$$ This $T$-invariant closed subvariety of $G/B$ is isomorphic to $\mathbb{P}^1$. Also, $$(Y_{w,\alpha})^T=\{x_w,x_{ws_{\alpha}}\},$$ while $Y_{w,\alpha}\setminus\{x_w,x_{ws_{\alpha}}\}$ is a one-dimensional $T$-orbit. In fact, it is known that every one-dimensional $T$-orbit in $G/B$ is of this form.

In light of the above, the GKM graph of $G/B$ has vertex set $W$, and there is an edge connecting $w,w'\in W$ if and only if $w'=ws_{\alpha}$ for some $\alpha\in\Delta_{+}$. The edge connecting $w$ and $ws_{\alpha}$ is then seen to be labelled with the weight $w\alpha$. In other words, the image of $H_T^*(G/B)\rightarrow H_T^*((G/B)^T)=\bigoplus_{w\in W}H_T^*(\text{pt})$ is \begin{equation}\label{Image of Localization Map for G/B}\{(f_w)\in\bigoplus_{w\in W}H_T^*(\text{pt}):(w\alpha)\vert(f_w-f_{ws_{\alpha}})\text{ }\forall w\in W,\text{ }\alpha\in\Delta_{+}\}.\end{equation} 

In the interest of examples to be considered later, let us construct the GKM graph of the flag variety of $G=\SL_3(\mathbb{C})$. Let $T\subseteq SL_3(\mathbb{C})$ and $B\subseteq\SL_3(\mathbb{C})$ be the maximal torus of diagonal matrices and the Borel of upper-triangular matrices, respectively. The positive roots are given by $$\alpha_1:= t_1-t_2,$$ $$\alpha_2:=t_2-t_3,$$ and $$\alpha_3:= t_1-t_3.$$ Note that $W=S_3$, and that in one-line notation, $s_{\alpha_1}=2 \ 1 \ 3$, $s_{\alpha_2}=1 \ 3 \ 2$, and $s_{\alpha_3}=3 \ 2 \ 1$. Hence, the GKM graph of $\SL_3(\mathbb{C})/B\cong Flags(\mathbb{C}^3)$ is as follows.

\begin{figure}[h]
\begin{picture}(350,120)(0,-60)
\put(175,45){\circle*{5}}
\put(175,-45){\circle*{5}}
\put(125,20){\circle*{5}}
\put(225,20){\circle*{5}}
\put(125,-20){\circle*{5}}
\put(225,-20){\circle*{5}}
\put(175,45){\line(-2,-1){50}}
\put(175,45){\line(2,-1){50}}
\put(175,45){\line(0,-1){90}}
\put(125,20){\line(0,-1){40}}
\put(125,20){\line(5,-2){100}}
\put(225,20){\line(0,-1){40}}
\put(225,20){\line(-5,-2){100}}
\put(125,-20){\line(2,-1){50}}
\put(225,-20){\line(-2,-1){50}}
\put(165,-60){$1 \ 2 \ 3$}
\put(165,52){$3 \ 2 \ 1$}
\put(100,25){$2 \ 3 \ 1$}
\put(225,-35){$1 \ 3 \ 2$}
\put(225,25){$3 \ 1 \ 2$}
\put(100,-35){$2 \ 1 \ 3$}
\put(140,-40){$\alpha_1$}
\put(195,-40){$\alpha_2$}
\put(110,0){$\alpha_3$}
\put(230,0){$\alpha_3$}
\put(140,37){$\alpha_2$}
\put(195,37){$\alpha_1$}
\put(140,-7){$\alpha_2$}
\put(195,-7){$\alpha_1$}
\put(177,20){$\alpha_3$}
\end{picture}
\caption{The GKM graph of $\SL_3(\mathbb{C})/B\cong Flags(\mathbb{C}^3)$}
\label{GKM Graph of Flags(C^3)}
\end{figure}
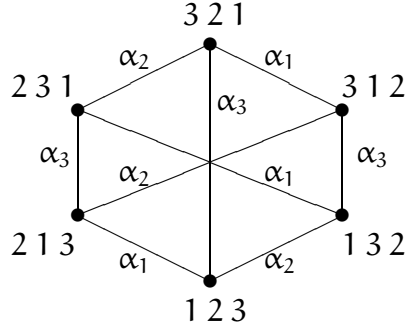

\subsection{The GKM Graph of $X_H(e_{\theta})$}\label{Section: The GKM Graph}
Since Proposition \ref{B-invariance} shows $X_H(e_{\theta})$ to be a union of Schubert cells, this variety has trivial cohomology in odd grading degrees. It follows that $X_H(e_{\theta})$ is $T$-equivariantly formal (see \cite{Goresky}, Section 14), and hence a GKM subvariety of $G/B$. Accordingly, we will describe $H_T^*(X_H(e_{\theta}))$ by exhibiting the GKM graph of $X_H(e_{\theta})$ as a subgraph of the GKM graph of $G/B$. Noting that the vertices of our subgraph have been determined by Proposition \ref{Fixed Points}, we need only determine the edges.  For this latter part, we will need to briefly discuss root strings. 

If $\alpha,\beta\in\Delta$ are roots, one has the root string $$S(\beta,\alpha):=(\Delta\cup\{0\})\cap\{\beta+n\alpha:n\in\mathbb{Z}\}.$$ If $p,q\in\mathbb{Z}$ are maximal for the properties $\beta+p\alpha\in S(\beta,\alpha)$ and $\beta-q\alpha\in S(\beta,\alpha)$, respectively, then $$S(\beta,\alpha)=\{\beta+n\alpha:-q\leq n\leq p\}$$ and \begin{equation}\label{Root String}q-p=\frac{2(\beta,\alpha)}{(\alpha,\alpha)}\end{equation} (see Proposition 2.29 of \cite{Knapp}).
The relevance of root strings to our present work is captured by the following lemma.
\begin{lemma}\label{Lowest Root}
If $w\in W$ and $\alpha\in\Delta_{+}$ are such that $x_{ws_{\alpha}}\in X_H(e_{\theta})^T$, then $$\bigoplus_{\beta\in S(w^{-1}\theta,\alpha)}\mathfrak{g}_{\beta}\subseteq H.\footnote{Here, it is understood that $\mathfrak{g}_0=[\mathfrak{g}_{-\alpha},\mathfrak{g}_{\alpha}]$.}$$
\end{lemma}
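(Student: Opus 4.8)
The plan is to use the description of $T$-fixed points of $X_H(e_\theta)$ from Proposition \ref{Fixed Points}, and then propagate the containment condition along the entire $\alpha$-string through $w^{-1}\theta$ by exploiting the $\SL_2(\mathbb{C})_\alpha$-action on the flag variety. The hypothesis $x_{ws_\alpha}\in X_H(e_\theta)^T$ means, by Proposition \ref{Fixed Points}, that $(ws_\alpha)^{-1}\theta = s_\alpha(w^{-1}\theta)\in\Delta_H$, i.e. $\mathfrak{g}_{s_\alpha(w^{-1}\theta)}\subseteq H$. Setting $\beta := w^{-1}\theta$, we therefore know $\mathfrak{g}_\beta\subseteq H$ (this is just $x_w\in X_H(e_\theta)^T$ — or more carefully, it is part of what we want; note $x_w\in X_H(e_\theta)$ since its edge-neighbor lies in the variety, which is $B$-invariant and a union of Schubert cells, so $x_w$ is below some fixed point... actually the cleanest route is to observe that $w\le ws_\alpha$ or $ws_\alpha\le w$ and the Schubert variety $\overline{Bw'B/B}\subseteq X_H(e_\theta)$ contains both, so $x_w\in X_H(e_\theta)^T$ as well, giving $\mathfrak{g}_\beta\subseteq H$) and $\mathfrak{g}_{s_\alpha\beta}\subseteq H$. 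So both endpoints $\beta$ and $s_\alpha\beta$ of the string $S(\beta,\alpha)$ have their root spaces in $H$.

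The key step is then: since $\beta$ and $s_\alpha\beta = \beta - \frac{2(\beta,\alpha)}{(\alpha,\alpha)}\alpha$ are the two reflections of each other, and $\frac{2(\beta,\alpha)}{(\alpha,\alpha)} = q - p$ where the string runs $\beta - q\alpha,\ldots,\beta+p\alpha$, the reflection $s_\alpha$ reverses the string. Thus $\{\beta, s_\alpha\beta\}$ are symmetric endpoints in the sense that one is as far ``above'' the center of the string as the other is ``below''. Concretely, if $\beta=\beta-q\alpha$ is the bottom (i.e. $q=0$), then $s_\alpha\beta$ is the top, and conversely; in general the larger of the two $H$-containments pins down one end. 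The real point is that $H$ is $\mathfrak{b}$-invariant: if $\gamma\in\Delta_H$ and $\delta\in\Delta_H$ with $\gamma < \delta$ in the root order (difference a nonnegative combination of simple roots), one does NOT automatically get everything in between, but for a root string one does, because $\mathrm{ad}_{e_\alpha}$ and $\mathrm{ad}_{e_{-\alpha}}$ (with $\alpha$ or $-\alpha$, whichever is positive, lying in $\mathfrak{b}$ up to the $\mathfrak{g}_{-\alpha}$ issue) act transitively along the string up to scalars, and one of $\pm\alpha$ is positive. I would argue: WLOG $\alpha\in\Delta_+$ so $e_\alpha\in\mathfrak{b}\subseteq H$ and $\mathrm{ad}_{e_\alpha}:\mathfrak{g}_\gamma\to\mathfrak{g}_{\gamma+\alpha}$ is a nonzero (hence surjective onto the one-dimensional target) map whenever $\gamma,\gamma+\alpha$ are both roots in the string. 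Starting from whichever of $\mathfrak{g}_\beta,\mathfrak{g}_{s_\alpha\beta}$ sits lower in the string and repeatedly applying $\mathrm{ad}_{e_\alpha}$, using $H$-invariance under $\mathfrak{b}$, I climb up the string until I reach the other endpoint, picking up every $\mathfrak{g}_{\beta+n\alpha}\subseteq H$ along the way; and I need to also capture $\mathfrak{g}_0=[\mathfrak{g}_{-\alpha},\mathfrak{g}_\alpha]$ when $0\in S(\beta,\alpha)$, which happens precisely when $\beta+n\alpha=0$ for some $n$, i.e. $\beta=\pm\alpha$ — but then $\mathfrak t\subseteq H$ handles it directly, or the same ad-climbing argument passes through $0$ with the convention stated in the footnote.

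The main obstacle is the bookkeeping about which endpoint is lower and ensuring the ad-climbing stays inside the string and never leaves $H$ prematurely; the crux is the combination of (a) one of $\pm\alpha$ being a positive root so its root vector lies in $\mathfrak{b}\subseteq H$, and (b) $\mathrm{ad}_{e_{\pm\alpha}}$ being nonzero between consecutive root spaces of a string (standard $\mathfrak{sl}_2$-theory: the string is an irreducible $\mathfrak{sl}_2$-module, so the raising operator is injective except at the top). I expect the proof to be short once these two facts are cited; the only genuinely delicate point is handling the degenerate case $\beta\in\{\alpha,-\alpha\}$ where $0$ enters the string, which is dispatched by $\mathfrak{t}\subseteq\mathfrak{b}\subseteq H$.
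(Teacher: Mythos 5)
There is a genuine gap, and it sits exactly at the step you flagged as delicate. Your climb needs $\mathfrak{g}_{w^{-1}\theta}\subseteq H$ in addition to the hypothesis $\mathfrak{g}_{s_{\alpha}(w^{-1}\theta)}\subseteq H$, and your argument for the former --- one of $w,ws_{\alpha}$ is Bruhat-larger, its Schubert variety lies in $X_H(e_{\theta})$ and contains both fixed points --- only works when $w\leq ws_{\alpha}$: the hypothesis gives $\overline{B(ws_{\alpha})B/B}\subseteq X_H(e_{\theta})$, and when $ws_{\alpha}<w$ that Schubert variety does not contain $x_w$. Indeed $x_w\in X_H(e_{\theta})^T$ does \emph{not} follow from the stated hypothesis: take $G=\SL_3(\mathbb{C})$, $H=\mathfrak{b}$, $\alpha=\alpha_1$, $w=3\ 1\ 2$, so $ws_{\alpha_1}=1\ 3\ 2<w$, $w^{-1}\theta=-\alpha_1\notin\Delta_H$ while $s_{\alpha_1}(w^{-1}\theta)=\alpha_1\in\Delta_H$; here $S(w^{-1}\theta,\alpha_1)=\{-\alpha_1,0,\alpha_1\}$ and $\mathfrak{g}_{-\alpha_1}\not\subseteq\mathfrak{b}$, so the conclusion itself fails. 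In other words, your instinct that the extra containment is needed is correct, but it must be taken as an additional hypothesis (as it is in the only application, Theorem \ref{GKM Graph Theorem}, where both $x_w$ and $x_{ws_{\alpha}}$ are assumed to lie in $X_H(e_{\theta})^T$); it cannot be derived. The paper's own proof tries to avoid this by working only from $\mathfrak{g}_{s_{\alpha}(w^{-1}\theta)}\subseteq H$, at the cost of a sign subtlety of its own when it replaces $\alpha$ by $-\alpha$, so your route and the paper's both ultimately need this extra input in the case $w\alpha\in\Delta_-$.

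The second missing ingredient is that you never use the fact that $\theta$ is the \emph{highest} root, and without it your endpoint claim is unjustified. Writing $\beta=w^{-1}\theta$ and $S(\beta,\alpha)=\{\beta-q\alpha,\ldots,\beta+p\alpha\}$, one has $s_{\alpha}\beta=\beta-(q-p)\alpha$, so the lower of $\beta$ and $s_{\alpha}\beta$ is $\beta-\max(0,q-p)\alpha$; this is the bottom of the string only when $p=0$ or $q=0$. For a general root $\beta$ both $p$ and $q$ can be positive (e.g.\ interior points of the length-four strings in $G_2$), in which case $\beta$ and $s_{\alpha}\beta$ are symmetric \emph{interior} points and your upward $\mathrm{ad}_{e_{\alpha}}$-climb misses every root below them. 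What rescues the argument is precisely the paper's computation: $S(w^{-1}\theta,\alpha)=w^{-1}S(\theta,w\alpha)$, and since $\theta+\gamma\notin\Delta\cup\{0\}$ for every $\gamma\in\Delta_+$, the root $\theta$ is an endpoint of $S(\theta,w\alpha)$ whichever sign $w\alpha$ has; hence $w^{-1}\theta$ is an endpoint of $S(w^{-1}\theta,\alpha)$ and $s_{\alpha}(w^{-1}\theta)$ is the other. Once this is in place and both endpoint root spaces are known to lie in $H$, the rest of your argument --- $\alpha\in\Delta_+$ so $\mathfrak{g}_{\alpha}\subseteq\mathfrak{b}$ and $[\mathfrak{g}_{\alpha},H]\subseteq H$, the raising operator is nonzero between consecutive root spaces of the (irreducible) $\alpha$-string, and $\mathfrak{g}_0\subseteq\mathfrak{t}\subseteq H$ --- is sound and matches the paper's bracketing step.
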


\begin{proof}
First note that either $\alpha\in w^{-1}\Delta_{+}$ or $-\alpha\in w^{-1}\Delta_{+}$. Since that $S(w^{-1}\theta,\alpha)=S(w^{-1}\theta,-\alpha)$, we may assume that $\alpha\in w^{-1}\Delta_{+}$ (ie. that $w\alpha\in\Delta_{+}$). Noting that $\theta$ is the highest root, \eqref{Root String} implies that $$S(\theta,w\alpha)=\{\theta-n(w\alpha),\theta-(n-1)(w\alpha),\ldots,\theta-w\alpha,\theta\}$$ for $n=\frac{2(\theta,w\alpha)}{(w\alpha,w\alpha)}=\frac{2(w^{-1}\theta,\alpha)}{(\alpha,\alpha)}.$ Acting on the above string by $w^{-1}$, we see that $S(w^{-1}\theta,\alpha)$ is given by $$S(w^{-1}\theta,\alpha)=\{w^{-1}\theta-n\alpha,w^{-1}\theta-(n-1)\alpha,\ldots,w^{-1}\theta-\alpha,w^{-1}\theta\}.$$ The lowest root in this string is $w^{-1}\theta-n\alpha=s_{\alpha}(w^{-1}\theta)$. Also, applying Proposition \ref{Fixed Points} to the condition $x_{ws_{\alpha}}\in X_H(e_{\theta})^T$ gives $$\mathfrak{g}_{w^{-1}\theta-n\alpha}=\mathfrak{g}_{s_{\alpha}(w^{-1}\theta)}\subseteq H.$$ Since $H$ is $\mathfrak{b}$-invariant, repeated bracketing with $\mathfrak{g}_{\alpha}\subseteq\mathfrak{b}$ establishes that the root space of each root in $S(w^{-1}\theta,\alpha)$ lies in $H$. This completes the proof. \end{proof}

\begin{theorem}\label{GKM Graph Theorem}
The GKM graph of $X_H(e_{\theta})$ is a full subgraph of the GKM graph of $G/B$.
\end{theorem}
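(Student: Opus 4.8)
The theorem asserts two things: that the vertex set and edge set of the GKM graph of $X_H(e_\theta)$ sit inside those of $G/B$ (which is automatic, since the paragraph preceding the theorem already records that $X_H(e_\theta)$ is a GKM subvariety of $G/B$, hence its GKM graph is canonically a sub-labelled-graph of that of $G/B$, with vertex set $\{w\in W:w^{-1}\theta\in\Delta_H\}$ by Proposition \ref{Fixed Points}), and that \emph{no edges are missing}. So the entire content to prove is: if $w\in W$ and $\alpha\in\Delta_+$ are such that both $x_w$ and $x_{ws_\alpha}$ lie in $X_H(e_\theta)^T$, then the $T$-invariant $\mathbb{P}^1$ $Y_{w,\alpha}\subseteq G/B$ joining them is contained in $X_H(e_\theta)$. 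Once that is known, $Y_{w,\alpha}\setminus\{x_w,x_{ws_\alpha}\}$ is a one-dimensional $T$-orbit of $X_H(e_\theta)$, so $w$ and $ws_\alpha$ are joined by an edge of the GKM graph of $X_H(e_\theta)$ bearing the label $w\alpha$ it already has in $G/B$; and since $X_H(e_\theta)$ is a closed $T$-subvariety of $G/B$, every one-dimensional $T$-orbit of $X_H(e_\theta)$ is one of $G/B$, so no further edges occur. Thus the subgraph is full.

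\textbf{Reduction.} To prove $Y_{w,\alpha}\subseteq X_H(e_\theta)$, first note that $(ws_\alpha)\alpha=-w\alpha$, so exactly one of $w\alpha$, $(ws_\alpha)\alpha$ lies in $\Delta_+$. Because $\dot s_\alpha\in\SL_2(\mathbb{C})_{\alpha}$ we have $Y_{w,\alpha}=Y_{ws_\alpha,\alpha}$, and the hypothesis is symmetric in $w\leftrightarrow ws_\alpha$; hence I may assume $w\alpha\in\Delta_+$.

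\textbf{The computation.} Fix $g\in N_G(T)$ representing $w$ and a non-zero $e_{-\alpha}\in\mathfrak{g}_{-\alpha}$. Since $Y_{e,\alpha}$ is the $\SL_2(\mathbb{C})_{\alpha}$-orbit of $x_e$, whose big cell is $\exp(\mathfrak{g}_{-\alpha})\cdot x_e$, one has $Y_{w,\alpha}=\{[g\exp(te_{-\alpha})]:t\in\mathbb{C}\}\cup\{x_{ws_\alpha}\}$. For the generic point $[g\exp(te_{-\alpha})]$,
$$\Adj_{(g\exp(te_{-\alpha}))^{-1}}(e_\theta)=\Adj_{\exp(-te_{-\alpha})}\big(\Adj_{g^{-1}}(e_\theta)\big)=\sum_{k\ge 0}\frac{(-t)^k}{k!}\,(\adj_{e_{-\alpha}})^k\big(\Adj_{g^{-1}}(e_\theta)\big).$$
Here $\Adj_{g^{-1}}(e_\theta)$ is a non-zero vector of $\mathfrak{g}_{w^{-1}\theta}$, so the $k$-th summand lies in $\mathfrak{g}_{w^{-1}\theta-k\alpha}$; because $\theta$ is the highest root, $w^{-1}\theta+\alpha$ is not a root, so the string $S(w^{-1}\theta,\alpha)$ does not rise above $w^{-1}\theta$ and every summand therefore lies in $\bigoplus_{\beta\in S(w^{-1}\theta,\alpha)}\mathfrak{g}_\beta$ (with the convention $\mathfrak{g}_0=[\mathfrak{g}_{-\alpha},\mathfrak{g}_\alpha]\subseteq\mathfrak{t}\subseteq H$ covering the degenerate case $w^{-1}\theta\in\mathbb{Z}\alpha$). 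By Lemma \ref{Lowest Root}, applied using $x_{ws_\alpha}\in X_H(e_\theta)^T$, this entire space is contained in $H$. Hence $\Adj_{(g\exp(te_{-\alpha}))^{-1}}(e_\theta)\in H$ for all $t$, i.e. $[g\exp(te_{-\alpha})]\in X_H(e_\theta)$; since $X_H(e_\theta)$ is closed it contains the limit point $x_{ws_\alpha}$ as well, giving $Y_{w,\alpha}\subseteq X_H(e_\theta)$.

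\textbf{Expected obstacle.} The genuinely delicate input, Lemma \ref{Lowest Root}, has already been proven, so I do not expect a serious difficulty. The one point that must be handled carefully is the reduction to $w\alpha\in\Delta_+$ together with the highest-root observation that $S(w^{-1}\theta,\alpha)$ has no term above $w^{-1}\theta$: this is exactly what forces the a priori long exponential series to land inside $\bigoplus_{\beta\in S(w^{-1}\theta,\alpha)}\mathfrak{g}_\beta$, which is the space controlled by Lemma \ref{Lowest Root}. Everything else is bookkeeping.
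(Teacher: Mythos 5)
Your proof is correct and follows essentially the same route as the paper's: reduce to showing $Y_{w,\alpha}\subseteq X_H(e_{\theta})$, parametrize the open cell of $Y_{w,\alpha}$ as $g\exp(\mathfrak{g}_{-\alpha})x_e$, expand the adjoint action of the exponential as a series landing in $\bigoplus_{\beta\in S(w^{-1}\theta,\alpha)}\mathfrak{g}_{\beta}$, and invoke Lemma \ref{Lowest Root} together with closedness of $X_H(e_{\theta})$. The only cosmetic difference is that you carry out the reduction to $w\alpha\in\Delta_{+}$ at the level of the theorem, whereas the paper leaves that sign bookkeeping inside the proof of Lemma \ref{Lowest Root}.
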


\begin{proof}
Equivalently, we claim that if $w\in W$ and $\alpha\in\Delta_{+}$ are such that $x_{w},x_{ws_{\alpha}}\in X_H(e_{\theta})^T$, then $Y_{w,\alpha}\subseteq X_H(e_{\theta})$. To this end, fix a representative $g\in N_G(T)$ of $w$, and let $N_{-\alpha}$ denote the connected closed subgroup of $\SL_2(\mathbb{C})_{\alpha}$ with Lie algebra $\mathfrak{g}_{-\alpha}$. Note that $$Y_{e,\alpha}=\overline{N_{-\alpha}x_e},$$ the closure of the $N_{-\alpha}$-orbit through $x_e$. We therefore have $$Y_{w,\alpha}=gY_{e,\alpha}=\overline{(gN_{-\alpha})x_e}.$$ Since $X_H(e_{\theta})$ is a closed subvariety of $G/B$, proving that $(gN_{-\alpha})x_e\subseteq X_H(e_{\theta})$ will establish that $Y_{w,\alpha}\subseteq X_H(e_{\theta})$. To prove the former, it will suffice to establish that $gh\in G_H(e_{\theta})$ for all $h\in N_{-\alpha}$, namely $\Adj_{(gh)^{-1}}(e_{\theta})\in H$. 

Suppose that $h\in N_{-\alpha}$. Writing $\Adj_{g^{-1}}(e_{\theta})=e_{w^{-1}\theta}\in\mathfrak{g}_{w^{-1}\theta}$ and $h=\exp(\xi)$ for $\xi\in\mathfrak{g}_{-\alpha}$, we obtain  \begin{equation}\label{Computation}\Adj_{(gh)^{-1}}(e_{\theta})=\Adj_{\exp(-\xi)}(e_{w^{-1}\theta})=e^{\adj_{-\xi}}(e_{w^{-1}\theta})=\sum_{k=0}^{\infty}\frac{1}{k!}(\adj_{-\xi})^k(e_{w^{-1}\theta})\end{equation} Furthermore, if $(\adj_{-\xi})^k(e_{w^{-1}\theta})\neq 0$, then it belongs to a root space for a root in $S(w^{-1}\theta,\alpha)$. Hence, \eqref{Computation} implies that $$\Adj_{(gh)^{-1}}(e_{\theta})\in\bigoplus_{\beta\in S(w^{-1}\theta,\alpha)}\mathfrak{g}_{\beta}.$$ By Lemma \ref{Lowest Root}, it follows that $\Adj_{(gh)^{-1}}(e_{\theta})\in H$.
\end{proof}

\begin{remark}
Using Carrell's results \cite{Carrell}, which show the GKM graph of a Schubert variety to be a full subgraph of the GKM graph of $G/B$, one can write an alternative proof of Theorem \ref{GKM Graph Theorem}.
\end{remark}

Combining Proposition \ref{Fixed Points} and Theorem \ref{GKM Graph Theorem}, one finds the image of the restriction map $$H_T^*(X_H(e_{\theta}))\rightarrow H_T^*(X_H(e_{\theta})^T)=\bigoplus_{\substack{w\in W \\ w^{-1}\theta\in\Delta_H}}H_T^*(\text{pt})$$ to be 
\begin{align*}
H^*_T(X_H(e_{\theta})) \cong \left\{ (f_w) \in \bigoplus_{\substack{w\in W \\ w^{-1}\theta\in\Delta_H}} H_T^*(\text{pt})\left\vert
\begin{matrix} \text{ $f_{w}-f_{ws_{\alpha}}$ is divisible by $w\alpha$} \\
\text{ for $\alpha\in\Delta_+$ and $w\in W$ satisfying } \\
 \text{$w^{-1}\theta\in\Delta_H$ and $(ws_{\alpha})^{-1}\theta\in\Delta_H$} \end{matrix} \right.\right\}.
\end{align*}

\subsection{GKM Graphs of $X_H(e_{\theta})$ in Type $A_2$}\label{Section: GKM Graphs in Type A}
By Theorem \ref{GKM Graph Theorem}, finding the GKM graph of $X_H(e_{\theta})$ amounts to determining its $T$-fixed points. With this in mind, suppose that $G=\SL_3(\mathbb{C})$ and that $T\subseteq\SL_3(\mathbb{C})$ and $B\subseteq\SL_3(\mathbb{C})$ are the maximal torus and Borel considered in \ref{Section: Examples in Type A}, respectively. Recall that $W=S_3$ and let $h:\{1,2,3\}\rightarrow\{1,2,3\}$ be a Hessenberg function corresponding to $H\subseteq\mathfrak{sl}_3(\mathbb{C})$. 

The possible Hessenberg functions are $(1,2,3)$, $(1,3,3)$, $(2,2,3)$, $(2,3,3)$, and $(3,3,3)$. By applying \eqref{Type A Fixed Points}, one determines the $T$-fixed points for each corresponding variety $X_H(e_{\theta})$. Noting that Theorem \ref{GKM Graph Theorem} then determines the GKM graph of each variety as a subgraph of Figure \ref{GKM Graph of Flags(C^3)}, the following are the GKM graphs of all the $X_H(e_{\theta})$ in type $A_2$. 

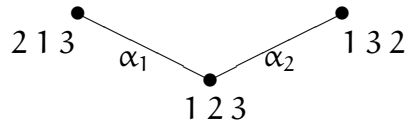
\begin{figure}[h]
\begin{picture}(350,60)(0,-60)
%\put(175,45){\circle*{5}}
\put(175,-45){\circle*{5}}
%\put(125,20){\circle*{5}}
%\put(225,20){\circle*{5}}
\put(125,-20){\circle*{5}}
\put(225,-20){\circle*{5}}
%\put(175,45){\line(-2,-1){50}}
%\put(175,45){\line(2,-1){50}}
%\put(175,45){\line(0,-1){90}}
%\put(125,20){\line(0,-1){40}}
%\put(125,20){\line(5,-2){100}}
%\put(225,20){\line(0,-1){40}}
%\put(225,20){\line(-5,-2){100}}
\put(125,-20){\line(2,-1){50}}
\put(225,-20){\line(-2,-1){50}}
\put(165,-60){$1 \ 2 \ 3$}
%\put(165,55){$(1\text{ }3)$}
%\put(90,25){$(1\text{ }3\text{ }2)$}
\put(225,-35){$1 \ 3 \ 2$}
%\put(225,25){$(1\text{ }2\text{ }3)$}
\put(100,-35){$2 \ 1 \ 3$}
\put(140,-40){$\alpha_1$}
\put(195,-40){$\alpha_2$}
%\put(110,0){$\alpha_3$}
%\put(230,0){$\alpha_3$}
%\put(140,37){$\alpha_2$}
%\put(195,37){$\alpha_1$}
%\put(140,-7){$\alpha_2$}
%\put(195,-7){$\alpha_1$}
%\put(177,20){$\alpha_3$}
\end{picture}
\caption{The GKM graph of $X_H(e_{\theta})$ for $h=(1,2,3)$}
\end{figure}

\begin{figure}[h]
\begin{picture}(350,90)(0,-60)
%\put(175,45){\circle*{5}}
\put(175,-45){\circle*{5}}
\put(125,20){\circle*{5}}
%\put(225,20){\circle*{5}}
\put(125,-20){\circle*{5}}
\put(225,-20){\circle*{5}}
%\put(175,45){\line(-2,-1){50}}
%\put(175,45){\line(2,-1){50}}
%\put(175,45){\line(0,-1){90}}
\put(125,20){\line(0,-1){40}}
\put(125,20){\line(5,-2){100}}
%\put(225,20){\line(0,-1){40}}
%\put(225,20){\line(-5,-2){100}}
\put(125,-20){\line(2,-1){50}}
\put(225,-20){\line(-2,-1){50}}
\put(165,-60){$1 \ 2 \ 3$}
%\put(165,55){$(1\text{ }3)$}
\put(100,25){$2 \ 3 \ 1$}
\put(225,-35){$1 \ 3 \ 2$}
%\put(225,25){$(1\text{ }2\text{ }3)$}
\put(100,-35){$2 \ 1 \ 3$}
\put(140,-40){$\alpha_1$}
\put(195,-40){$\alpha_2$}
\put(110,0){$\alpha_3$}
%\put(230,0){$\alpha_3$}
%\put(140,37){$\alpha_2$}
%\put(195,37){$\alpha_1$}
%\put(165,5){$\alpha_2$}
\put(170,5){$\alpha_1$}
%\put(177,20){$\alpha_3$}
\end{picture}
\caption{The GKM graph of $X_H(e_{\theta})$ for $h=(1,3,3)$}
\end{figure}

\begin{figure}[h]
\begin{picture}(350,90)(0,-60)
%\put(175,45){\circle*{5}}
\put(175,-45){\circle*{5}}
%\put(125,20){\circle*{5}}
\put(225,20){\circle*{5}}
\put(125,-20){\circle*{5}}
\put(225,-20){\circle*{5}}
%\put(175,45){\line(-2,-1){50}}
%\put(175,45){\line(2,-1){50}}
%\put(175,45){\line(0,-1){90}}
%\put(125,20){\line(0,-1){40}}
%\put(125,20){\line(5,-2){100}}
\put(225,20){\line(0,-1){40}}
\put(225,20){\line(-5,-2){100}}
\put(125,-20){\line(2,-1){50}}
\put(225,-20){\line(-2,-1){50}}
\put(165,-60){$1 \ 2 \ 3$}
%\put(165,55){$(1\text{ }3)$}
%\put(90,25){$(1\text{ }3\text{ }2)$}
\put(225,-35){$1 \ 3 \ 2$}
\put(225,25){$3 \ 1 \ 2$}
\put(100,-35){$2 \ 1 \ 3$}
\put(140,-40){$\alpha_1$}
\put(195,-40){$\alpha_2$}
%\put(110,0){$\alpha_3$}
\put(230,0){$\alpha_3$}
%\put(140,37){$\alpha_2$}
%\put(195,37){$\alpha_1$}
\put(165,5){$\alpha_2$}
%\put(195,-7){$\alpha_1$}
%\put(177,20){$\alpha_3$}
\end{picture}
\caption{The GKM graph of $X_H(e_{\theta})$ for $h=(2,2,3)$}
\end{figure}

\begin{figure}[h]
\begin{picture}(350,90)(0,-60)
%\put(175,45){\circle*{5}}
\put(175,-45){\circle*{5}}
\put(125,20){\circle*{5}}
\put(225,20){\circle*{5}}
\put(125,-20){\circle*{5}}
\put(225,-20){\circle*{5}}
%\put(175,45){\line(-2,-1){50}}
%\put(175,45){\line(2,-1){50}}
%\put(175,45){\line(0,-1){90}}
\put(125,20){\line(0,-1){40}}
\put(125,20){\line(5,-2){100}}
\put(225,20){\line(0,-1){40}}
\put(225,20){\line(-5,-2){100}}
\put(125,-20){\line(2,-1){50}}
\put(225,-20){\line(-2,-1){50}}
\put(165,-60){$1 \ 2 \ 3$}
%\put(165,55){$(1\text{ }3)$}
\put(100,25){$2 \ 3 \ 1$}
\put(225,-35){$1 \ 3 \ 2$}
\put(225,25){$3 \ 1 \ 2$}
\put(100,-35){$2 \ 1 \ 3$}
\put(140,-40){$\alpha_1$}
\put(195,-40){$\alpha_2$}
\put(110,0){$\alpha_3$}
\put(230,0){$\alpha_3$}
%\put(140,37){$\alpha_2$}
%\put(195,37){$\alpha_1$}
\put(145,15){$\alpha_1$}
\put(190,15){$\alpha_2$}
%\put(177,20){$\alpha_3$}
\end{picture}
\caption{The GKM graph of $X_H(e_{\theta})$ for $h=(2,3,3)$}
\label{fig:GKM Graph for h=(2,3,3)}
\end{figure}

\begin{figure}[h]
\begin{picture}(350,120)(0,-60)
\put(175,45){\circle*{5}}
\put(175,-45){\circle*{5}}
\put(125,20){\circle*{5}}
\put(225,20){\circle*{5}}
\put(125,-20){\circle*{5}}
\put(225,-20){\circle*{5}}
\put(175,45){\line(-2,-1){50}}
\put(175,45){\line(2,-1){50}}
\put(175,45){\line(0,-1){90}}
\put(125,20){\line(0,-1){40}}
\put(125,20){\line(5,-2){100}}
\put(225,20){\line(0,-1){40}}
\put(225,20){\line(-5,-2){100}}
\put(125,-20){\line(2,-1){50}}
\put(225,-20){\line(-2,-1){50}}
\put(165,-60){$1 \ 2 \ 3$}
\put(165,52){$3 \ 2 \ 1$}
\put(100,25){$2 \ 3 \ 1$}
\put(225,-35){$1 \ 3 \ 2$}
\put(225,25){$3 \ 1 \ 2$}
\put(100,-35){$2 \ 1 \ 3$}
\put(140,-40){$\alpha_1$}
\put(195,-40){$\alpha_2$}
\put(110,0){$\alpha_3$}
\put(230,0){$\alpha_3$}
\put(140,37){$\alpha_2$}
\put(195,37){$\alpha_1$}
\put(140,-7){$\alpha_2$}
\put(195,-7){$\alpha_1$}
\put(177,20){$\alpha_3$}
\end{picture}
\caption{The GKM graph of $X_H(e_{\theta})$ for $h=(3,3,3)$}
\end{figure}
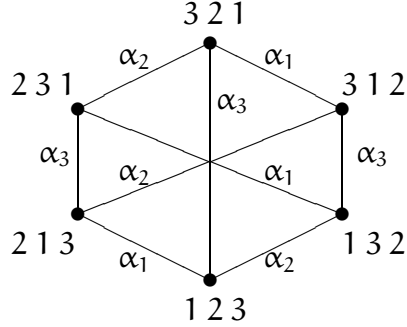

\section{Cohomology Ring Presentations}\label{Section: Cohomology Ring Presentations}
In this section, we use the restriction maps $$i^*:H^*(G/B)\rightarrow H^*(X_H(e_{\theta}))\text{ and }i_T^*:H_T^*(G/B)\rightarrow H_T^*(X_H(e_{\theta}))$$ to explicitly present $H^*(X_H(e_{\theta}))$ and $H_T^*(X_H(e_{\theta}))$ as quotients of $H^*(G/B)$ and $H_T^*(G/B)$, respectively.

\subsection{Ordinary Cohomology}\label{Section: Ordinary Cohomology}
We begin with the following proposition. 

\begin{proposition}\label{Surjectivity on Ordinary Cohomology}
The restriction map $i^*:H^*(G/B)\rightarrow H^*(X_H(e_{\theta}))$ is surjective.
\end{proposition}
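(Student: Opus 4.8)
The plan is to deduce surjectivity of $i^*$ from the surjectivity of the equivariant restriction map $i_T^*:H_T^*(G/B)\rightarrow H_T^*(X_H(e_{\theta}))$, together with the fact that both spaces are equivariantly formal. More precisely, the key structural input is that $X_H(e_{\theta})$ is a union of Schubert cells (Proposition \ref{B-invariance}), hence has cohomology concentrated in even degrees and is $T$-equivariantly formal; the same holds for $G/B$. Under these hypotheses, the natural maps $H_T^*(X_H(e_{\theta}))\rightarrow H^*(X_H(e_{\theta}))$ and $H_T^*(G/B)\rightarrow H^*(G/B)$ (induced by restricting to a fibre of the Borel fibration, equivalently by setting the generators of $H_T^*(\text{pt})$ to zero) are surjective, and in fact realize the ordinary cohomology as the quotient $H_T^*(-)\otimes_{H_T^*(\text{pt})}\mathbb{Q}$.

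First I would establish (or invoke, once Proposition \ref{Surjectivity on Ordinary Cohomology} is proven) the equivariant surjectivity statement; but since the excerpt presents ordinary cohomology first, the cleanest self-contained route is a direct cell-by-cell argument. Namely, $X_H(e_{\theta})=\coprod_{w\in W,\, w^{-1}\theta\in\Delta_H} BwB/B$ by Proposition \ref{Fixed Points}, and this is a $T$-stable subvariety of $G/B$ whose cells are a subset of the Schubert cells of $G/B$. The inclusion $i:X_H(e_{\theta})\hookrightarrow G/B$ respects the Bruhat stratifications, so by the standard compatibility of the cell structures one gets that the Schubert classes $\sigma(w)\in H^{2\ell(w_0)-2\ell(w)}(G/B)$ dual to the Schubert varieties contained in $X_H(e_{\theta})$ restrict to a basis of $H^*(X_H(e_{\theta}))$. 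Concretely: since both varieties are paved by affines, $H^*(X_H(e_{\theta}))$ has a basis given by the classes $[\overline{BwB/B}\cap X_H(e_{\theta})]$ indexed by $w$ with $w^{-1}\theta\in\Delta_H$, and restriction of the corresponding Schubert class in $H^*(G/B)$ hits each such basis element (modulo lower-dimensional strata, which can be cleared by downward induction on dimension). Hence $i^*$ is surjective.

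The main obstacle is making the last step rigorous without circularity: one must verify that restricting the Schubert basis of $H^*(G/B)$ to $X_H(e_{\theta})$ yields exactly the dual-cell basis of $H^*(X_H(e_{\theta}))$, rather than merely a spanning set with unknown triangularity. This is handled by the observation that for a $B$-stable (hence union-of-Schubert-varieties) subvariety paved by the Schubert cells it contains, the restriction map in cohomology/Borel–Moore homology is governed by the inclusion of CW-subcomplexes, and the fundamental class of $\overline{BwB/B}$ restricts to the fundamental class of $\overline{BwB/B}\subseteq X_H(e_\theta)$ whenever $\overline{BwB/B}\subseteq X_H(e_\theta)$, and to $0$ otherwise (by dimension/support reasons). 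Upward induction on $w$ in Bruhat order, combined with the fact that the dual Schubert classes form a basis, then forces surjectivity. Alternatively — and this is likely the route the authors take — one can simply prove the equivariant statement by the GKM/localization description obtained in Theorem \ref{GKM Graph Theorem} first, and then obtain the ordinary case by the equivariant formality quotient; I would flag that as the backup plan if the direct cellular argument gets bogged down in transversality bookkeeping.
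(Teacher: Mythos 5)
Your main route is essentially the paper's own argument: the paper likewise dualizes to homology and exploits the compatible affine pavings of $X_H(e_{\theta})$ and $G/B$ by Schubert cells, made rigorous via Borel--Moore homology and induction on the filtration by cell dimension, to show that the pushforward $H_*(X_H(e_{\theta}))\rightarrow H_*(G/B)$ carries the cell-closure basis injectively. (Your "backup plan" of deducing this from equivariant surjectivity would be circular in the paper's logical order, since the proof of Proposition \ref{equiv surjectivity} relies on the present proposition; but you correctly set that aside in favour of the cellular argument.)
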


\begin{proof}
Since rational singular cohomology is the dual of rational singular homology, it suffices to show that the map $H_*(X_H(e_{\theta}))\rightarrow H_*(G/B)$ is injective. 
Now, consider the commutative diagram 

\begin{equation}\label{Commutative Diagram for Borel-Moore}
\begin{CD}
H_*(X_H(e_{\theta}))@>{\cong}>> \overline{H}_*(X_H(e_{\theta}))\\
@V{ }VV @V{ }VV\\
H_*(G/B) @>{\cong}>> \overline{H}_*(G/B)
\end{CD},
\end{equation}
where $\overline{H}_*$ denotes Borel-Moore homology (see \cite{Fulton}); for a closed embedding of a topological space $Y$ into some Euclidean space $\mathbb{R}^m$, the Borel-Moore homology $\overline{H}_i(Y)$, which is defined up to isomorphism, is given by $H^{m-i}(\mathbb{R}^m,\mathbb{R}^m\setminus Y)$. 
The vertical maps in \eqref{Commutative Diagram for Borel-Moore} are the maps induced by the inclusion $X_H(e_{\theta})\hookrightarrow G/B$, and the horizontal isomorphisms are the ones described in 6.10.14 of \cite{Spanier}.
So what we need check is that the induced map $\overline{H}_*(X_H(e_{\theta}))\rightarrow \overline{H}_*(G/B)$ is injective.
To this end, consider the subsets 

$$(G/B)_p := \coprod_{w\in W, \ \ell(w)\leq p}BwB/B$$
and
$$(X_H(e_{\theta}))_p := \coprod_{\substack{w\in W, \ \ell(w)\leq p \\ x_w\in X_H(e_{\theta})^T}}BwB/B$$
for $p\in\mathbb{Z}_{\geq 0}$. Since Proposition \ref{B-invariance} tells us that $X_H(e_{\theta})$ is a union of Schubert cells, we have
the affine pavings
$$G/B=(G/B)_{\dim_{\mathbb{C}}(G/B)}\supseteq \ldots \supseteq (G/B)_1 \supseteq (G/B)_0=\emptyset$$
and 
$$X_H(e_{\theta})=(X_H(e_{\theta}))_{\dim_{\mathbb{C}}(X_H(e_{\theta}))}\supseteq \ldots \supseteq (X_H(e_{\theta}))_1 \supseteq (X_H(e_{\theta}))_0=\emptyset.$$
Also, for each $p=0,1,\ldots, \dim_{\mathbb{C}}(G/B)$, we have a commutative diagram (c.f. \cite{Fulton})

\begin{equation*}
\begin{CD}
0@>{}>>\overline{H}_*((X_H(e_{\theta}))_{p-1})@>{ }>> \overline{H}_*((X_H(e_{\theta}))_p)@>{ }>> \displaystyle{\bigoplus_{\substack{\ell(w)=p \\ x_w\in X_H(e_{\theta})^T}}}\overline{H}_*(BwB/B)@>{}>>0\\
@. @V{ }VV @V{ }VV @V{ }VV\\
0@>{}>>\overline{H}_*((G/B)_{p-1})@>{ }>> \overline{H}_*((G/B)_p) @>{ }>>\displaystyle{\bigoplus_{\ell(w)=p}}\overline{H}_*(BwB/B)@>{}>>0.
%0@>{}>>\overline{H}_*((X_H(e_{\theta}))_{p-1})@>{ }>> \overline{H}_*((X_H(e_{\theta}))_p)@>{ }>> \displaystyle{\bigoplus_{\substack{\ell(w)=p \\ x_w\in X_H(e_{\theta})^T}}}\overline{H}_*(BwB/B)@>{}>>0\\
%@. @V{ }VV @V{ }VV @V{ }VV\\
%0@>{}>>\overline{H}_*((G/B)_{p-1})@>{ }>> \overline{H}_*((G/B)_p) @>{ }>>\displaystyle{\bigoplus_{\ell(w)=p}}\overline{H}_*(BwB/B)@>{}>>0.
\end{CD}
\end{equation*}

The left and the middle vertical maps are those induced by the inclusions, and each component of the right vertical map is the composition $\overline{H}_*(BwB/B)\rightarrow \overline{H}_*(\coprod_{\ell(w)=p}BwB/B)\rightarrow \overline{H}_*(BwB/B)$. It is straightforward to see that each component map is an isomorphim.
Hence, the right vertical map is an injection, and we see that the induced map $\overline{H}_*(X_H(e_{\theta}))\rightarrow \overline{H}_*(G/B)$ is injective by induction on $p$. This completes the proof.
\end{proof}

In light of Proposition \ref{Surjectivity on Ordinary Cohomology}, we shall address ourselves to computing the kernel of $i^*$. To this end, we will need the following proposition. 

\begin{proposition}\label{Schubert Variety}
Suppose that $v,w\in W$ satisfy $v\geq w$. If $x_w\not\in X_H(e_{\theta})^T$, then $x_v\not\in X_H(e_{\theta})^T$.
\end{proposition}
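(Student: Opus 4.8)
The plan is to reduce everything to a combinatorial statement about roots and the Weyl group, using the description of $X_H(e_\theta)^T$ from Proposition \ref{Fixed Points}. By that proposition, the hypothesis $x_w \notin X_H(e_\theta)^T$ means $w^{-1}\theta \notin \Delta_H$, and the desired conclusion $x_v \notin X_H(e_\theta)^T$ means $v^{-1}\theta \notin \Delta_H$. So the whole statement is equivalent to the implication: if $v \geq w$ in Bruhat order and $w^{-1}\theta \notin \Delta_H$, then $v^{-1}\theta \notin \Delta_H$. Taking contrapositives, it suffices to show that $v \geq w$ and $v^{-1}\theta \in \Delta_H$ together force $w^{-1}\theta \in \Delta_H$.

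First I would recall that $\Delta_H$ is the set of roots $\gamma$ with $\mathfrak{g}_\gamma \subseteq H$, and that $H$ is a $\mathfrak{b}$-invariant subspace, so $\Delta_H$ is closed upward in the root poset of the excerpt (if $\gamma \in \Delta_H$ and $\gamma \leq \delta$, then $\delta \in \Delta_H$): bracketing with positive simple root vectors moves $\gamma$ up and stays inside $H$. Hence the condition $w^{-1}\theta \in \Delta_H$ only depends on how $w^{-1}\theta$ sits in the root poset, and it is automatically satisfied whenever $w^{-1}\theta \geq \mu$ for some $\mu \in \Delta_H$; in particular $\Delta_H$ always contains $\Delta_+$ (since $\mathfrak{b} \subseteq H$). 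The key geometric fact I would then invoke is that $X_H(e_\theta)$ is a closed $B$-invariant subvariety (Proposition \ref{B-invariance}) which is a union of Schubert cells indexed exactly by $\{w : x_w \in X_H(e_\theta)^T\}$. Since Schubert varieties satisfy $X(w) = \overline{BwB/B} = \coprod_{u \leq w} BuB/B$, and $X_H(e_\theta)$ is closed, the set $\{w : x_w \in X_H(e_\theta)^T\}$ must be closed \emph{downward} in Bruhat order: if $x_v \in X_H(e_\theta)^T$ and $w \leq v$, then the Schubert cell $BwB/B$ lies in the closure of $BvB/B \subseteq X_H(e_\theta)$, hence in $X_H(e_\theta)$, so $x_w \in X_H(e_\theta)^T$. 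This is precisely the contrapositive of the claimed statement.

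So the cleanest route is: (1) restate the claim via Proposition \ref{Fixed Points} as a statement purely about $T$-fixed points; (2) use Proposition \ref{B-invariance} to write $X_H(e_\theta)$ as a union of Schubert varieties, as was already done in the proof of Lemma \ref{Lemma: Irreducible Components of Closed B-Invariant Subvarieties} via equation \eqref{Union of Schubert Varieties}; (3) invoke the standard equivalence $u \leq v \iff X(u) \subseteq X(v)$ to conclude that $x_v \in X_H(e_\theta)^T$ and $w \leq v$ imply $X(w) \subseteq X(v) \subseteq X_H(e_\theta)$, so $x_w \in X_H(e_\theta)^T$; (4) take the contrapositive. I expect no real obstacle here — every ingredient is already established in the excerpt (Propositions \ref{B-invariance} and \ref{Fixed Points}, and the Bruhat-order characterization of Schubert variety containment used in Lemma \ref{Lemma: Irreducible Components of Closed B-Invariant Subvarieties}). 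The only point requiring a moment's care is making sure the direction of the inequality and the direction of containment match up correctly; once that is pinned down the proof is a one-line consequence of $X_H(e_\theta)$ being a closed union of Schubert cells.
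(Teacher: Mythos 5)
Your proposal is correct and follows essentially the same route as the paper: pass to the contrapositive, use that $X_H(e_{\theta})$ is a closed $B$-invariant subvariety so that $x_v\in X_H(e_{\theta})^T$ forces $\overline{BvB/B}\subseteq X_H(e_{\theta})$, and then use $w\leq v$ to place $x_w$ in that closure. The preliminary remarks about $\Delta_H$ being upward-closed in the root poset are true but not needed for the argument you actually give.
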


\begin{proof}
Suppose that $x_v\in X_H(e_{\theta})^T$. Since $X_H(e_{\theta})$ is a closed $B$-invariant subvariety of $G/B$, it follows that $\overline{BvB/B}\subseteq X_H(e_{\theta})$. Noting that $v\geq w$, we must have $x_w\in\overline{BvB/B}$. This contradicts our assumption that $x_w\not\in X_H(e_{\theta})^T$.
\end{proof}

Now, recall that $B_{-}\subseteq G$ denotes our opposite Borel subgroup. One then has the opposite Schubert varieties
$$X_{-}(w):=\overline{B_{-}wB/B},\text{ }\text{ }w\in W.$$ Note that $X_{-}(w)$ determines an opposite Schubert class $\sigma(w)\in H^{2\ell(w)}(G/B)$. 

\begin{corollary}\label{corollary for ideal}
The linear subspace \begin{equation}\label{Ideal Definition} J_H:=\bigoplus_{x_w\not\in X_H(e_{\theta})^T}\mathbb{Q}\sigma(w)\end{equation} is an ideal of $H^*(G/B)$.
\end{corollary}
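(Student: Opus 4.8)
## Proof Proposal

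The plan is to show that $J_H$ is closed under multiplication by arbitrary elements of $H^*(G/B)$. Since $H^*(G/B)$ is generated as a ring by its degree-$2$ part (indeed, by the Schubert divisor classes, or equivalently by the image of $H^2_T(G/B)$), it suffices to verify that $J_H$ is stable under multiplication by any degree-$2$ class. By the Chevalley–Monk formula, the product of a degree-$2$ class with an opposite Schubert class $\sigma(w)$ is a $\mathbb{Q}$-linear combination of opposite Schubert classes $\sigma(v)$ with $\ell(v) = \ell(w) + 1$ and $v \gtrdot w$ (a cover in the Bruhat order). Hence the key point is: whenever $v \geq w$ and $\sigma(w)$ appears in the defining sum of $J_H$, the class $\sigma(v)$ also appears in that sum. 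But this is exactly the content of Proposition \ref{Schubert Variety}: if $x_w \notin X_H(e_\theta)^T$ and $v \geq w$, then $x_v \notin X_H(e_\theta)^T$. Therefore every term arising in such a product already lies in $J_H$, and $J_H$ is an ideal.

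Concretely, the steps are as follows. First, recall that $\{\sigma(w) : w \in W\}$ is a $\mathbb{Q}$-basis of $H^*(G/B)$, so $J_H$ is a well-defined linear subspace and multiplication by a ring element is determined by its action on this basis. Second, observe that it is enough to check $x \cdot J_H \subseteq J_H$ for $x$ ranging over a generating set of $H^*(G/B)$ in degree $2$. Third, invoke the Chevalley–Monk rule: for a degree-$2$ generator $x$ and $w \in W$, one has $x \cdot \sigma(w) = \sum_v c_v^{x,w}\, \sigma(v)$ where the sum runs over $v$ covering $w$ in the Bruhat order, so in particular over $v \geq w$. Fourth, if $\sigma(w)$ is one of the basis elements spanning $J_H$, i.e. $x_w \notin X_H(e_\theta)^T$, then Proposition \ref{Schubert Variety} gives $x_v \notin X_H(e_\theta)^T$ for each such $v$, so each $\sigma(v)$ lies in $J_H$ and hence so does $x \cdot \sigma(w)$. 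Fifth, conclude by linearity that $x \cdot J_H \subseteq J_H$ for all $x$ in the generating set, and then for all $x \in H^*(G/B)$ since products of generators span the ring.

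The main obstacle is purely a matter of citing the right tools cleanly rather than any genuine difficulty: one must be comfortable asserting that $H^*(G/B)$ is generated in degree $2$ (true over $\mathbb{Q}$ by the Borel presentation, or because the Schubert divisors generate), and one must use the version of the Chevalley–Monk formula for \emph{opposite} Schubert classes — which differs from the usual one only in that the Bruhat order is reversed or, equivalently, that covers go \emph{up} in length, consistent with $\sigma(w) \in H^{2\ell(w)}(G/B)$. A reader could alternatively phrase the argument without Chevalley–Monk at all: the span of $\{\sigma(v) : x_v \notin X_H(e_\theta)^T\}$ is, by Proposition \ref{Schubert Variety}, precisely the kernel of the restriction $H^*(G/B) \to H^*(X_H(e_\theta))$ (the opposite Schubert classes whose support meets $X_H(e_\theta)$ restrict to a basis there, by Proposition \ref{Surjectivity on Ordinary Cohomology} together with a dimension count), and the kernel of a ring homomorphism is automatically an ideal. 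I would present the Chevalley–Monk argument as the primary proof since it is self-contained and does not rely on identifying $J_H$ with $\ker i^*$ in advance, but I would remark that the alternative viewpoint makes the corollary essentially tautological once the computation of $\ker i^*$ in the next subsection is in hand.
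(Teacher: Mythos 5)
Your proof is correct, and it rests on exactly the same two pillars as the paper's: the upward-closedness of $\{w\in W: x_w\notin X_H(e_{\theta})^T\}$ in the Bruhat order (Proposition \ref{Schubert Variety}) and the fact that products in the opposite Schubert basis are supported on $v\geq w$. The only real difference is in how the second pillar is invoked: the paper directly cites the general Schubert-calculus fact that $\sigma(u)\sigma(w)=\sum_{v\geq u,w}c_{uw}^v\sigma(v)$ for arbitrary $u,w\in W$ and concludes in one line, whereas you reduce to multiplication by degree-$2$ classes (using that $H^*(G/B;\mathbb{Q})$ is generated in degree $2$) and then apply the Chevalley--Monk rule, whose supporting terms are Bruhat covers $v\gtrdot w$ and in particular satisfy $v\geq w$. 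Your route is marginally more elementary, since Chevalley--Monk is more explicit than the general support statement, at the cost of the extra (true, but worth stating carefully over $\mathbb{Q}$) degree-$2$ generation step. Your alternative remark --- that $J_H$ is the kernel of $i^*$ and hence automatically an ideal --- is also valid, but as you note it would be circular relative to the paper's organization, since Theorem \ref{Ordinary Kernel} needs this corollary to make sense of the quotient $H^*(G/B)/J_H$ before identifying it with $H^*(X_H(e_{\theta}))$.
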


\begin{proof}
If $u\in W$ and $x_w\notin X_H(e)^T$, then ordinary Schubert calculus gives \begin{equation}\label{Schubert Calculus}\sigma(u)\sigma(w)=\sum_{v\geq u,w}c_{uw}^v\sigma(v)\end{equation} for some $c_{uw}^v\in\mathbb{Z}$. By Proposition \ref{Schubert Variety}, $x_v\not\in X_H(e_{\theta})^T$ for all $v$ appearing in the sum \eqref{Schubert Calculus}. Hence, $\sigma(u)\sigma(w)\in J_H$, proving that $J_H$ is an ideal.
\end{proof}

With these considerations in mind, we offer the following presentation of $H^*(X_H(e_{\theta}))$.
\begin{theorem}\label{Ordinary Kernel}
The map $i^*$ induces a graded $\mathbb{Q}$-algebra isomorphism
\begin{align*}
H^*(G/B)/J_H \rightarrow H^*(X_H(e_{\theta})).
\end{align*}
\end{theorem}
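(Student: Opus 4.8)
The plan is to combine the surjectivity of $i^*$ (Proposition \ref{Surjectivity on Ordinary Cohomology}) with a dimension count, using the fact that $J_H$ is an ideal (Corollary \ref{corollary for ideal}) together with a description of the kernel of $i^*$ in terms of opposite Schubert classes. First I would recall that $H^*(G/B)$ has a $\mathbb{Q}$-basis $\{\sigma(w):w\in W\}$ of opposite Schubert classes, and that restriction of $\sigma(w)$ to the Schubert cell decomposition behaves well: concretely, the opposite Schubert varieties $X_-(w)$ meet $X_H(e_\theta)$ transversally-in-spirit in the sense that the restriction $i^*\sigma(w)$ is the class determined by $X_-(w)\cap X_H(e_\theta)$ inside $X_H(e_\theta)$. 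Since $X_H(e_\theta)$ is a union of Schubert cells $BwB/B$ indexed by $\{w:x_w\in X_H(e_\theta)^T\}$ (Proposition \ref{B-invariance} and Proposition \ref{Fixed Points}), the classes $i^*\sigma(w)$ for $x_w\in X_H(e_\theta)^T$ form a $\mathbb{Q}$-basis for $H^*(X_H(e_\theta))$, while $i^*\sigma(w)=0$ whenever $x_w\notin X_H(e_\theta)^T$.

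The main body of the argument then runs as follows. The containment $J_H\subseteq\ker(i^*)$ is immediate from the last sentence above: $J_H$ is spanned by the $\sigma(w)$ with $x_w\notin X_H(e_\theta)^T$, each of which restricts to zero. Conversely, $i^*$ is surjective by Proposition \ref{Surjectivity on Ordinary Cohomology}, and since the $i^*\sigma(w)$ for $x_w\in X_H(e_\theta)^T$ are linearly independent in $H^*(X_H(e_\theta))$, any element of $\ker(i^*)$, when written in the basis $\{\sigma(w):w\in W\}$, must have vanishing coefficients on all $w$ with $x_w\in X_H(e_\theta)^T$; hence $\ker(i^*)\subseteq J_H$. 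Therefore $\ker(i^*)=J_H$, and the induced map $H^*(G/B)/J_H\to H^*(X_H(e_\theta))$ is an isomorphism of graded $\mathbb{Q}$-algebras. Alternatively, one can avoid the explicit claim about $i^*\sigma(w)$ by comparing Poincaré polynomials: Proposition \ref{Surjectivity on Ordinary Cohomology} gives $\dim H^*(X_H(e_\theta))\leq \dim H^*(G/B)/J_H = |X_H(e_\theta)^T|$, while the affine paving of $X_H(e_\theta)$ by Schubert cells gives $\dim H^*(X_H(e_\theta))=|X_H(e_\theta)^T|$; combined with $J_H\subseteq\ker(i^*)$ and surjectivity, the map on the quotient is a surjection between spaces of equal finite dimension, hence an isomorphism.

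The step I expect to be the main obstacle is justifying that $i^*\sigma(w)=0$ for $x_w\notin X_H(e_\theta)^T$ and that the remaining $i^*\sigma(w)$ are independent --- or, in the Poincaré-polynomial variant, making the inequality $\dim H^*(X_H(e_\theta)) \leq \dim(H^*(G/B)/J_H)$ fully rigorous. The cleanest route is the dimension-count version: $X_H(e_\theta)$ admits an affine paving by the cells $BwB/B$ with $x_w\in X_H(e_\theta)^T$ (this is exactly the paving used in the proof of Proposition \ref{Surjectivity on Ordinary Cohomology}), so $\dim_{\mathbb{Q}}H^*(X_H(e_\theta)) = |X_H(e_\theta)^T|$; on the other hand $\dim_{\mathbb{Q}}(H^*(G/B)/J_H)$ equals the number of $w$ with $x_w\in X_H(e_\theta)^T$, which is the same number. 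Since $J_H\subseteq\ker(i^*)$, the surjection $i^*$ factors through a surjection $H^*(G/B)/J_H\twoheadrightarrow H^*(X_H(e_\theta))$ between $\mathbb{Q}$-vector spaces of the same finite dimension, forcing it to be an isomorphism; it is automatically a graded $\mathbb{Q}$-algebra map since $i^*$ is. The one point requiring a sentence of care is that $J_H$ really is a graded subspace whose complementary basis is indexed by $X_H(e_\theta)^T$, which is clear from \eqref{Ideal Definition} and the fact that $\{\sigma(w)\}_{w\in W}$ is a basis.
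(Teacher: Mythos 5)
Your overall architecture is the same as the paper's: show $J_H\subseteq\ker(i^*)$, invoke the surjectivity of $i^*$ from Proposition \ref{Surjectivity on Ordinary Cohomology}, and finish with the dimension count $\dim_{\mathbb{Q}}(H^*(G/B)/J_H)=\vert X_H(e_{\theta})^T\vert=\dim_{\mathbb{Q}}H^*(X_H(e_{\theta}))$ coming from the Schubert-cell paving. That part is fine. The genuine gap is the one step you yourself flag as ``the main obstacle'' and then never close: the vanishing $i^*(\sigma(w))=0$ for $x_w\notin X_H(e_{\theta})^T$. Your justification is the heuristic that $i^*\sigma(w)$ is ``the class determined by $X_-(w)\cap X_H(e_{\theta})$'' in a transversal-in-spirit sense; this is not a proof, and for singular, reducible subvarieties of $G/B$ such intersection statements are exactly the kind of thing that needs an argument. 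Note also that your ``cleanest route'' (the dimension count) does \emph{not} sidestep this: without $J_H\subseteq\ker(i^*)$ the map $H^*(G/B)/J_H\to H^*(X_H(e_{\theta}))$ is not even well defined, so the dimension count only replaces the linear-independence claim, not the vanishing claim.

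The paper closes this gap geometrically: for $x_w\notin X_H(e_{\theta})^T$ it proves the much stronger statement $X_-(w)\cap X_H(e_{\theta})=\emptyset$, from which $i^*(\sigma(w))=0$ is immediate (the class $\sigma(w)$ is supported on $X_-(w)$). The disjointness uses the decompositions $X_-(w)=\coprod_{v\geq w}B_-vB/B$ and $X_H(e_{\theta})=\coprod_{x_u\in X_H(e_{\theta})^T}BuB/B$, Deodhar's criterion that $B_-vB/B\cap BuB/B\neq\emptyset$ if and only if $v\leq u$ \cite{Deodhar}, and Proposition \ref{Schubert Variety} (which says the index set $\{u:x_u\in X_H(e_{\theta})^T\}$ is downward closed in Bruhat order): a nonempty intersection would force $w\leq u$ for some $u$ with $x_u\in X_H(e_{\theta})^T$, hence $x_w\in X_H(e_{\theta})^T$, a contradiction. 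You should supply this (or an equivalent) argument; with it in place, the rest of your dimension-count version matches the paper's proof.
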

\proof
To begin, we claim that $i^*(\sigma(w))=0$ for $w\in W$ satisfying $x_w\not\in X_H(e_{\theta})^T$.  
This will follow from our establishing that
\begin{align}\label{Empty Intersection}
X_{-}(w)\cap X_H(e_{\theta})=\emptyset.
\end{align}
To this end, we have 
\begin{align}\label{Decompositions}
X_{-}(w)=\coprod_{w\leq v}B_-vB/B 
\quad \text{and} \quad
X_H(e_{\theta})=\coprod_{x_u\in X_H(e_{\theta})^T}BuB/B,
\end{align}
with the latter decomposition being a consequence of Proposition \ref{B-invariance}. Now, recall that for $u,v\in W$, $B_-vB/B\cap BuB/B\neq\emptyset$ if and only if $v\leq u$ (see \cite{Deodhar}, Corollary 1.2). So, if $X_{-}(w)\cap X_H(e_{\theta})\neq\emptyset$, then \ref{Decompositions} implies $w\leq u$ for some $u\in W$ with $x_u\in X_H(e_{\theta})^T$. Proposition \ref{Schubert Variety} then gives $x_w\in X_H(e_{\theta})^T$, which is a contradiction. We conclude that \eqref{Empty Intersection} holds, so that $i^*(\sigma(w))=0$ whenever $x_w\not\in X_H(e_{\theta})^T$.

In light of our findings, $i^*$ induces a surjective graded $\mathbb{Q}$-algebra homomorphism
\begin{align}\label{Isomorphism}
H^*(G/B)/J_H \rightarrow H^*(X_H(e_{\theta})).
\end{align}
To conclude that \eqref{Isomorphism} is an isomorphism, it will suffice to prove that \begin{equation}\label{Dimensions}
\dim_{\mathbb{Q}}(H^*(G/B)/J_{H})=\dim_{\mathbb{Q}}(H^*(X_H(e_{\theta}))).
\end{equation} 
Noting that
\begin{align*}
H^*(G/B) = \bigoplus_{w\in W} \mathbb{Q}\sigma(w),
\end{align*}
we have $\dim_{\mathbb{Q}}(H^*(G/B)/J_{H})=\vert X_{H}(e_{\theta})^T\vert$.
Also, the Schubert cell decomposition of $X_H(e_{\theta})$ gives $\dim_{\mathbb{Q}}(H^*(X_H(e_{\theta})))=\vert X_{H}(e_{\theta})^T\vert.$ 
%Mention that this follows from the Borel-Moore homology calculation for $X_H(e_{\lamnda})$.
Hence, \eqref{Dimensions} is satisfied and the map $H^*(G/B)/J_{H} \rightarrow H^*(X_H(e_{\theta}))$ is an isomorphism.
\qed

\vspace{10pt}
%\subsection{The Case of the Type $A_2$ Springer Fibre}\label{Section: The Case of the Type A Springer Fibre}

For example, suppose that $G=\SL_3(\mathbb{C})$ and that all notation is as presented in \ref{Section: Examples in Type A}. We will use Theorem \ref{Ordinary Kernel} to obtain a presentation of $H^*(X_{\mathfrak{b}}(e_{\theta}))$, the cohomology ring of our Springer fiber. To this end, let $Flags(\mathbb{C}^3) \times \mathbb{C}^3$ be the trivial vector bundle over $Flags(\mathbb{C}^3)$, and set
\begin{align*}
E_i := \{ (V_{\bullet},v)\in Flags(\mathbb{C}^3)\times\mathbb{C}^3 \mid v\in V_i \}
\end{align*}
for $i\in\{1,2,3\}$. Note that $E_i$ is a complex vector bundle over $Flags(\mathbb{C}^3)$. Each quotient $L_i := E_i/E_{i-1}$ is a complex line bundle, allowing us to consider its first Chern class $$c_1(L_i)\in H^*(Flags(\mathbb{C}^3)).$$ Now, recall that the algebra morphism  
$$\mathbb{Q}[x_1,x_2,x_3]\rightarrow H^*(Flags(\mathbb{C}^3)),\quad x_i\mapsto c_1(L_i),\text{ } i\in\{1,2,3\}$$
is surjective. Recall also that its kernel is the ideal generated by $e_1(x)$, $e_2(x)$, and $e_3(x)$, where $e_i(x)$ denotes the $i$-th elementary symmetric polynomial in the variables $x_1,x_2,x_3$. In particular, we have an algebra isomorphism
\begin{equation}\label{Flag Variety Cohomology Isomorphism}
H^*(Flags(\mathbb{C}^3))\xrightarrow{\cong}\mathbb{Q}[x_1,x_2,x_3]/(e_1(x),e_2(x),e_3(x)).
\end{equation}
The ideal $J_{\mathfrak{b}}\subseteq H^*(Flags(\mathbb{C}^3))$ is seen to be generated by the opposite Schubert classes $\sigma(2\ 3\ 1),\sigma(3\ 1\ 2),\sigma(3\ 2\ 1)\in H^*(Flags(\mathbb{C}^3))$. Their images under the isomorphism \eqref{Flag Variety Cohomology Isomorphism} are 
\begin{align*}
\sigma(2\ 3\ 1) = x_1x_2, \quad 
\sigma(3\ 1\ 2) = x_1x_1, \quad
\sigma(3\ 2\ 1) = x_1x_1x_2, 
\end{align*}
where (by an abuse of notation) $x_i$ is also used to denote its image in the quotient algebra $\mathbb{Q}[x_1,x_2,x_3]/(e_1(x), e_2(x), e_3(x))$. Applying Theorem \ref{Ordinary Kernel}, we obtain
$$H^*(X_{\mathfrak{b}}(e_{\theta}))\cong H^*(Flags(\mathbb{C}^3))/J_{\mathfrak{b}}\cong\frac{\mathbb{Q}[x_1,x_2,x_3]/(e_1(x), e_2(x), e_3(x))}{\mathbb{Q}x_1x_2\oplus\mathbb{Q}x_1x_1\oplus\mathbb{Q}x_1x_1x_2}.$$
A straightforward manipulation of the rightmost ring then yields
$$H^*(X_{\mathfrak{b}}(e_{\theta}))\cong \mathbb{Q}[x_1,x_2,x_3]/(e_1(x), e_2(x), e_3(x), x_1x_2, x_1x_3, x_2x_3),$$ which is exactly Tanisaki's presentation of $H^*(X_{\mathfrak{b}}(e_{\theta}))$ (see \cite{Tanisaki}).

\subsection{Equivariant Cohomology}\label{Section: Equivariant Cohomology}
As one might expect, we have the following equivariant counterpart of Proposition \ref{Surjectivity on Ordinary Cohomology}.
\begin{proposition}\label{equiv surjectivity}
The restriction map $i_T^*:H_T^*(G/B)\rightarrow H_T^*(X_H(e_{\theta}))$ is surjective.
\end{proposition}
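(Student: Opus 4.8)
The plan is to deduce equivariant surjectivity from the ordinary surjectivity already established in Proposition \ref{Surjectivity on Ordinary Cohomology}, using the equivariant formality of $X_H(e_{\theta})$ and a graded Nakayama argument. First I would record the structural input: since $X_H(e_{\theta})$ is a union of Schubert cells (Proposition \ref{B-invariance}), it carries a $T$-stable affine paving, so it is $T$-equivariantly formal and $H_T^*(X_H(e_{\theta}))$ is a free $H_T^*(\mathrm{pt})$-module of finite rank (equal to $\vert X_H(e_{\theta})^T\vert$). The same holds for $G/B$.

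Next I would set up the commutative square comparing $i_T^*$ with $i^*$. Restricting along the inclusion of a fibre of the Borel bundle $X_H(e_{\theta})\hookrightarrow (X_H(e_{\theta}))_T\to BT$, and likewise for $G/B$, produces vertical ring maps $H_T^*(G/B)\to H^*(G/B)$ and $H_T^*(X_H(e_{\theta}))\to H^*(X_H(e_{\theta}))$ that fit into a commutative square with $i_T^*$ along the top and $i^*$ along the bottom. Each vertical map kills the ideal of positive-degree elements of $H_T^*(\mathrm{pt})$, and equivariant formality says precisely that the induced map $H_T^*(-)\otimes_{H_T^*(\mathrm{pt})}\mathbb{Q}\xrightarrow{\cong}H^*(-)$ is an isomorphism in each case.

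Now let $C:=\operatorname{coker}(i_T^*)$, a graded $H_T^*(\mathrm{pt})$-module concentrated in non-negative degrees and finitely generated over the Noetherian ring $H_T^*(\mathrm{pt})$. Applying the right-exact functor $-\otimes_{H_T^*(\mathrm{pt})}\mathbb{Q}$ to the exact sequence $H_T^*(G/B)\to H_T^*(X_H(e_{\theta}))\to C\to 0$ and using the commutative square above identifies $C\otimes_{H_T^*(\mathrm{pt})}\mathbb{Q}$ with $\operatorname{coker}(i^*)$, which is zero by Proposition \ref{Surjectivity on Ordinary Cohomology}. The graded Nakayama lemma then forces $C=0$, i.e. $i_T^*$ is surjective.

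I do not anticipate a serious obstacle; the only points needing care are the verification of equivariant formality of $X_H(e_{\theta})$ (immediate from the affine paving) and the bookkeeping that makes the Nakayama step legitimate (grading and finite generation of the cokernel). For completeness I would note two alternative routes: one can mimic the Borel--Moore argument of Proposition \ref{Surjectivity on Ordinary Cohomology} in equivariant Borel--Moore homology, showing that $\overline{H}^T_*(X_H(e_{\theta}))\to\overline{H}^T_*(G/B)$ is a split injection of free $H_T^*(\mathrm{pt})$-modules and then dualizing; or one can invoke Theorem \ref{GKM Graph Theorem} and argue that equivariant classes supported on the GKM subgraph extend. The latter is less automatic (extension along a full subgraph is not formal in general), so I would present the Nakayama argument as the main proof.
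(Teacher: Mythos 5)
Your argument is correct and is essentially the paper's own proof: the paper uses the same commutative square of forgetful maps, the same two inputs (Proposition \ref{Surjectivity on Ordinary Cohomology} and the equivariant formality of both spaces), and simply unwinds your graded Nakayama step as an explicit induction on cohomological degree, with connectedness supplying the base case. The only difference is packaging—you apply Nakayama to the cokernel where the paper does the degree-by-degree diagram chase—so there is no gap on either side.
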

\proof
It will suffice to prove that the restriction of $i_T^*$ to degree--$k$ cohomology, $H_T^{k}(G/B)\rightarrow H_T^{k}(X_H(e_{\theta}))$, is surjective for all $k\geq 0$. To accomplish this, we will use induction on $k$.

For the base case, note that $G/B$ and $X_H(e_{\theta})$ are connected (see Theorem 4.4 of \cite{PrecupConnectedness} for the connectedness of $X_H(e_{\theta})$). It follows that $i_T^*$ is surjective on degree--$0$ cohomology. Now, assume that $i_T^*$ is surjective on degree--$j$ cohomology for all $j\leq k$ and let $\alpha\in H_T^{k+1}(X_H(e_{\theta}))$ be given. Since $G/B$ and $X_H(e_{\theta})$ are equivariantly formal, the forgetful maps $\phi:H_T^*(G/B)\rightarrow H^*(G/B)$ and $\phi':H_T^*(X_H(e_{\theta}))\rightarrow H^*(X_H(e_{\theta}))$ fit into the commutative diagram
$$\begin{CD}
0 @>{ }>> H_T^{>0}(\text{pt})H_T^*(G/B) @>{}>> H_T^*(G/B) @>{\phi}>> H^*(G/B)@>{}>>0\\
@. @V{ }VV @V{i_T^*}VV @V{i^*}VV\\
0 @>{ }>> H_T^{>0}(\text{pt})H_T^*(X_H(e_{\theta})) @>{}>>H_{T}^*(X_H(e_{\theta}))@>{\phi'}>> H^*(X_H(e_{\theta}))@>{}>>0
\end{CD}$$
of exact sequences.
By a straightforward diagram chase, there exists $\beta\in H_T^{k+1}(G/B)$ such that $\phi'(\alpha-i_T^*(\beta))=0$. It follows that $\alpha-i_T^*(\beta)\in H_T^{>0}(\text{pt})H_T^*(X_H(e_{\theta}))$, or equivalently
\begin{equation}\label{Equation: Linear Combination}
\alpha-i_T^*(\beta)=\sum_{j=1}^nc_j\gamma_j
\end{equation} for some homogeneous $c_1,\ldots,c_n\in H_T^{>0}(\text{pt})$ and some homogeneous $\gamma_1,\ldots,\gamma_n\in H_T^{*}(X_H(e_{\theta}))$. Since $\alpha-i_T^*(\beta)\in H^{k+1}_T(X_H(e_{\theta}))$, it follows that $\gamma_1,\ldots,\gamma_n$ are of degree $\leq k$. Using our surjectivity assumption, we may find homogeneous elements $\omega_1,\ldots,\omega_n\in H_T^{*}(G/B)$ for which $\gamma_1=i_T^*(\omega_1),\ldots,\gamma_n=i_T^*(\omega_n)$. Hence, \eqref{Equation: Linear Combination} becomes

$$\alpha-i_T^*(\beta) =\sum_{j=1}^nc_ji_T^*(\omega_j)=i_T^*\left(\sum_{j=1}^nc_j\omega_j\right).$$ In other words, 
$$\alpha=i_T^*\left(\beta+\sum_{j=1}^nc_j\omega_j\right),$$
completing the proof.
\qed

Proceeding in analogy with \ref{Section: Ordinary Cohomology}, we now compute the kernel of $i_T^*$. However, we will need the following well-known description of the image of $\sigma_T(w)$ under the restriction map 
\begin{equation}\label{Definition of Restriction to a Point} i_w^*:H_T^*(G/B)\rightarrow H_T^*(\{x_w\})=H_T^*(\text{pt}).
\end{equation}

\begin{lemma}\label{Restriction of an Equivariant Schubert Class}
If $w\in W$, then \begin{equation}
\label{Restriction to Fixed Point}
i_w^*(\sigma_T(w))=\prod_{\alpha\in\Delta_{+}\cap w\Delta_{-}}\alpha.\end{equation}
\end{lemma}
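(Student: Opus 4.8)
The plan is to compute $i_w^*(\sigma_T(w))$ by recognizing $\sigma_T(w)$ as the equivariant fundamental class of the opposite Schubert variety $X_-(w) = \overline{B_-wB/B}$, and then exploiting the fact that $x_w$ is a smooth point of $X_-(w)$ so that the restriction is the equivariant Euler class of the normal space (equivalently, the product of the weights of the tangent space to $X_-(w)$ at $x_w$). Concretely, I would first recall that $\sigma_T(w)$ has degree $2\ell(w)$ and that $X_-(w)$ has complex codimension $\ell(w)$ in $G/B$; since $B_-wB/B$ is the open $B_-$-orbit in $X_-(w)$ and $x_w$ lies in it, $x_w$ is a smooth point and $X_-(w)$ is $T$-invariant, so the self-intersection/restriction formula applies.

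The key computation is then identifying the $T$-weights on $T_{x_w}X_-(w)$, or dually on the normal space $N_{x_w}$ in $G/B$. Recall that $T_{x_w}(G/B) \cong \mathfrak{g}/(w\mathfrak{b})$, whose $T$-weights are exactly $\{w\beta : \beta \in \Delta_-\} = w\Delta_-$. The cell $B_-wB/B$ through $x_w$ is the orbit of the unipotent radical of $B_-$, whose Lie algebra is $\bigoplus_{\gamma\in\Delta_-}\mathfrak{g}_\gamma$; the tangent directions it contributes at $x_w$ are those $w\beta$ with $\beta\in\Delta_-$ and $w\beta\in\Delta_-$, i.e. the weights in $w\Delta_- \cap \Delta_-$. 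Therefore the normal weights — the weights appearing in $N_{x_w}X_-(w) = T_{x_w}(G/B)/T_{x_w}X_-(w)$ — are precisely those $w\beta$ with $\beta\in\Delta_-$ and $w\beta\in\Delta_+$, which is the set $\Delta_+ \cap w\Delta_-$. This set has cardinality $\ell(w)$, matching the codimension, which is a useful consistency check.

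Finally, by the standard localization principle for a $T$-fixed point lying on a smooth, $T$-invariant subvariety, the restriction of the equivariant class of that subvariety to the fixed point equals the equivariant Euler class of the normal space, namely the product of the normal weights. This yields
\[
i_w^*(\sigma_T(w)) = \prod_{\alpha\in\Delta_+\cap w\Delta_-}\alpha,
\]
as claimed. I expect the main obstacle to be the careful justification of the ``restriction $=$ Euler class of the normal bundle'' step: one must know that $x_w$ is a smooth point of $X_-(w)$ and that, since $X_H(e_\theta)$ and more relevantly $G/B$ admit $T$-invariant affine pavings by Schubert cells, the equivariant cohomology is free over $H_T^*(\mathrm{pt})$ and the localization theorem applies cleanly; alternatively one can sidestep this by citing the Billey/AJS formula or by a direct computation using the Atiyah–Bott–Berline–Vergne localization of $\int_{X_-(w)} 1$, but the cleanest route is the normal-bundle argument together with the identification of the weight set $\Delta_+\cap w\Delta_-$. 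Everything else is bookkeeping with root-space weights.
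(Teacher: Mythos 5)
Your proposal is correct and follows essentially the same route as the paper's proof: both identify $i_w^*(\sigma_T(w))$ with the equivariant Euler class of the normal space $T_{x_w}(G/B)/T_{x_w}(X_-(w))$ at the smooth point $x_w$, compute $T_{x_w}(G/B)\cong\mathfrak{g}/w\mathfrak{b}$ with weight set $w\Delta_-$ and $T_{x_w}(X_-(w))$ with weight set $\Delta_-\cap w\Delta_-$, and conclude that the normal weights are $\Delta_+\cap w\Delta_-$. The only cosmetic difference is that you describe the cell tangent space via the unipotent radical of $B_-$ while the paper writes it as $\mathfrak{b}_-/(\mathfrak{b}_-\cap w\mathfrak{b})$; these give the same weight set.
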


\begin{proof}
Since $x_w$ is a smooth point of $X_{-}(w)$, $i_w^*(\sigma_T(w))$ is precisely the $T$-equivariant Euler class of the $T$-representation \begin{equation}\label{T-Representation}
T_{x_w}(G/B)/T_{x_w}(X_{-}(w)).\end{equation} It is therefore equal to the product of the weights occurring in \eqref{T-Representation}, which we now determine. To this end, as $wBw^{-1}$ is the $G$-stabilizer of $x_w$ and has Lie algebra $w\mathfrak{b}$, we have isomorphisms \begin{equation}\label{Tangent Space to Flag Variety}T_{x_w}(G/B)\cong\mathfrak{g}/w\mathfrak{b}\cong \bigoplus_{\alpha\in w\Delta_{-}}\mathfrak{g}_{\alpha}\end{equation} of $T$-representations. Also, the $B_{-}$-stabilizer of $x_w$ is $B_{-}\cap wBw^{-1}$ and has Lie algebra $\mathfrak{b}_{-}\cap w\mathfrak{b}$. We therefore have \begin{equation}\label{Tangent Space to Schubert Variety}
T_{x_w}(X_{-}(w))=T_{x_w}(B_{-}wB/B)\cong\mathfrak{b}_{-}/(\mathfrak{b}_{-}\cap w\mathfrak{b})\cong\bigoplus_{\alpha\in\Delta_{-}\cap w\Delta_{-}}\mathfrak{g}_{\alpha}.\end{equation} Combining \eqref{Tangent Space to Flag Variety} and \eqref{Tangent Space to Schubert Variety}, one finds that $$T_{x_w}(G/B)/T_{x_w}(X_{-}(w))\cong\bigoplus_{\alpha\in\Delta_{+}\cap w\Delta_{-}}\mathfrak{g}_{\alpha}$$ as $T$-representations. This completes the proof.  
\end{proof}  
   
Given $w\in W$, note that $X_{-}(w)$ determines an equivariant opposite Schubert class $$\sigma_T(w)\in H_T^{2\ell(w)}(G/B).$$ These classes are seen to form an $H_T^*(\text{pt})$-module basis of $H_T^*(G/B)$. With this in mind, the following corollary introduces an important $H_T^*(\text{pt})$-submodule of $H_T^*(G/B)$.
\begin{corollary}
The submodule \begin{equation}\label{Equivariant Ideal}
J_H^T:=\bigoplus_{x_w\not\in X_H(e_{\theta})^T}H_T^*(\emph{pt})\sigma_T(w)
\end{equation} is an ideal of $H_T^*(G/B)$.
\end{corollary}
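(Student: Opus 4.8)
The plan is to mimic the structure of the proof of Corollary \ref{corollary for ideal}, replacing ordinary Schubert calculus with its $T$-equivariant analogue. Recall that the equivariant opposite Schubert classes $\{\sigma_T(w)\}_{w\in W}$ form an $H_T^*(\text{pt})$-module basis of $H_T^*(G/B)$. Since $J_H^T$ is by \eqref{Equivariant Ideal} the $H_T^*(\text{pt})$-submodule spanned by those $\sigma_T(w)$ with $x_w\notin X_H(e_{\theta})^T$, it is automatically closed under addition and under multiplication by $H_T^*(\text{pt})$. Thus the only thing to check is that $J_H^T$ absorbs multiplication by an arbitrary basis element $\sigma_T(u)$, $u\in W$; by $H_T^*(\text{pt})$-bilinearity this suffices for $J_H^T$ to be an ideal.

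First I would invoke the equivariant Chevalley--Monk / equivariant structure-constant formula for products of opposite Schubert classes: for $u\in W$ and $w\in W$ with $x_w\notin X_H(e_{\theta})^T$, one may write
\begin{equation*}
\sigma_T(u)\,\sigma_T(w)=\sum_{v} c_{uw}^{v}\,\sigma_T(v),
\end{equation*}
where each $c_{uw}^v\in H_T^*(\text{pt})$ is nonzero only if $v\geq u$ and $v\geq w$ in the Bruhat order. (This positivity of support for equivariant Schubert structure constants is standard; one can cite, e.g., Graham's theorem, or simply the fact that $X_{-}(v)$ appears in the product only when $X_{-}(v)\subseteq X_{-}(u)\cap X_{-}(w)$, equivalently $v\geq u,w$.) The key point is the same condition $v\geq w$ that drove the ordinary case.

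Then the argument closes exactly as before: by Proposition \ref{Schubert Variety}, $x_w\notin X_H(e_{\theta})^T$ together with $v\geq w$ forces $x_v\notin X_H(e_{\theta})^T$. Hence every $v$ occurring with nonzero coefficient in the expansion of $\sigma_T(u)\sigma_T(w)$ satisfies $x_v\notin X_H(e_{\theta})^T$, so each summand $c_{uw}^v\sigma_T(v)$ lies in $J_H^T$, and therefore $\sigma_T(u)\sigma_T(w)\in J_H^T$. Extending $H_T^*(\text{pt})$-linearly in both factors, we conclude that $J_H^T\cdot H_T^*(G/B)\subseteq J_H^T$, so $J_H^T$ is an ideal.

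The main obstacle is purely expository: making sure the support statement for the equivariant structure constants $c_{uw}^v$ (that $c_{uw}^v=0$ unless $v\geq u,w$) is cited or justified cleanly, since unlike the ordinary case these coefficients are polynomials rather than integers. Once that support property is in hand, everything else is a verbatim transcription of the proof of Corollary \ref{corollary for ideal}, with Proposition \ref{Schubert Variety} doing the same work.
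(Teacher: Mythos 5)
Your proposal is correct and follows essentially the same route as the paper: the paper's proof likewise reduces to the support property $\sigma_T(u)\sigma_T(w)=\sum_{v\geq u,w}c_{uw}^v\sigma_T(v)$ with $c_{uw}^v\in H_T^*(\text{pt})$ and then applies Proposition \ref{Schubert Variety} exactly as in the ordinary case. The only difference is that where you cite this support property as standard (e.g.\ via Graham), the paper supplies its own proof by a GKM localization and support-reduction induction; either is acceptable.
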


\begin{proof}

The argument is similar to that used in the proof of Proposition \ref{corollary for ideal}, provided one uses the well-known fact that
\begin{equation}\label{Linear Combination}
\sigma_T(u)\sigma_T(w)=\sum_{v\geq u,w}c_{uw}^v\sigma_T(v)
\end{equation}
for $c_{uw}^v\in H_T^*(\text{pt})$. For the reader's convenience, we briefly recount a proof of this fact. To this end, let $v\in W$ be a minimal element with the property that \begin{equation}\label{Minimal Element}i_{v}^*(\sigma_T(u)\sigma_T(w))\neq 0.\end{equation} Note that $vs_{\alpha}<v$ for all $\alpha\in\Delta_{+}\cap v^{-1}\Delta_{-}$, so that $$i_{vs_{\alpha}}^*(\sigma_T(u)\sigma_T(w))=0,\text{ }\text{ }\alpha\in\Delta_{+}\cap v^{-1}\Delta_{-}.$$ The GKM conditions \eqref{Image of Localization Map for G/B} defining the image of $H_T^*(G/B)\rightarrow H_T^*((G/B)^T)$ then give $$(v\alpha)\text{ }\vert \text{ }i_{v}^*(\sigma_T(u)\sigma_T(w)),\text{ }\text{ }\alpha\in\Delta_{+}\cap v^{-1}\Delta_{-}.$$ Hence, the product 
\begin{equation}\label{Root Product}\prod_{\alpha\in\Delta_{+}\cap v^{-1}\Delta_{-}}v\alpha\end{equation}
also divides $i_{v}^*(\sigma_T(u)\sigma_T(w))$. Using Lemma \ref{Restriction of an Equivariant Schubert Class}, one finds that \eqref{Root Product} coincides with\\ $(-1)^{l(v)}i_v^*(\sigma_T(v))$. In particular, $i_v^*(\sigma_T(v))$ divides $i_v^*(\sigma_T(u)\sigma_T(w))$, meaning that 
\begin{equation}\label{Support Reduction}i_v^*(\sigma_T(u)\sigma_T(w)-c_{uw}^v\sigma_T(v))=0\end{equation} for some $c_{uw}^v\in H_T^*(\text{pt})$. 

Continuing the support-reducing process by induction, one eventually obtains a class with no support in the GKM graph. In other words, there exist coefficients $c_{uw}^v\in H_T^*(\text{pt})$ for all $v\geq u,w$ such that $$\sigma_T(u)\sigma_T(w)-\sum_{v\geq u,w}c_{uw}^v\sigma_T(v)$$ has zero image under the localization map $H_T^*(G/B)\rightarrow H_T^*((G/B)^T)$. Since the localization map is injective, we conclude that \begin{equation}\label{Linear Combination}
\sigma_T(u)\sigma_T(w)=\sum_{v\geq u,w}c_{uw}^v\sigma_T(v).
\end{equation}
\end{proof}

\begin{theorem}\label{Equivariant Kernel}
The map $i_T^*:H_T^*(G/B)\rightarrow H_T^*(X_H(e_{\theta}))$ induces a graded $H_T^*(\emph{pt})$-algebra isomorphism $$H_T^*(G/B)/J_H^T\rightarrow H_T^*(X_H(e_{\theta})).$$
\end{theorem}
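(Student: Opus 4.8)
The plan is to mirror the structure of the proof of Theorem~\ref{Ordinary Kernel}, replacing the dimension count over $\mathbb{Q}$ with a rank argument over $H_T^*(\text{pt})$. First I would show that $i_T^*(\sigma_T(w))=0$ whenever $x_w\notin X_H(e_{\theta})^T$. This follows verbatim from the argument in the proof of Theorem~\ref{Ordinary Kernel}: the class $\sigma_T(w)$ is supported on $X_{-}(w)$, and we showed there that $X_{-}(w)\cap X_H(e_{\theta})=\emptyset$, so the restriction of $\sigma_T(w)$ to $X_H(e_{\theta})$ vanishes. (More carefully, one can check vanishing after localizing at each $T$-fixed point of $X_H(e_{\theta})$: by Lemma~\ref{Restriction of an Equivariant Schubert Class} and the standard fact that $i_u^*(\sigma_T(w))=0$ unless $w\leq u$, together with Proposition~\ref{Schubert Variety}, every fixed point $x_u\in X_H(e_{\theta})^T$ has $x_w\not\leq x_u$ in Bruhat order, hence $i_u^*(\sigma_T(w))=0$.) Consequently $J_H^T\subseteq\aker(i_T^*)$, and since $i_T^*$ is surjective by Proposition~\ref{equiv surjectivity}, it induces a surjective graded $H_T^*(\text{pt})$-algebra homomorphism
\[
H_T^*(G/B)/J_H^T\longrightarrow H_T^*(X_H(e_{\theta})).
\]

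Next I would argue that this surjection is an isomorphism by a rank/freeness comparison over $R:=H_T^*(\text{pt})$. The point is that $H_T^*(G/B)$ is a free $R$-module with basis $\{\sigma_T(w):w\in W\}$, so the quotient $H_T^*(G/B)/J_H^T$ is a free $R$-module with basis the images of $\{\sigma_T(w):x_w\in X_H(e_{\theta})^T\}$, of rank $\vert X_H(e_{\theta})^T\vert$. On the other hand, $X_H(e_{\theta})$ is a union of Schubert cells (Proposition~\ref{B-invariance}), hence $T$-equivariantly formal with $H_T^*(X_H(e_{\theta}))$ free over $R$ of rank equal to the number of cells, which is $\vert X_H(e_{\theta})^T\vert$. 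Thus the map is a surjection of free $R$-modules of the same finite rank in each cohomological degree; since each graded piece $H_T^k$ is a finitely generated module over the graded ring $R$ and the map respects the grading, a surjection of free $R$-modules of equal (graded) rank is an isomorphism. I would make this precise degree by degree, noting that in each fixed total degree $k$ both sides are finitely generated free $R$-modules and the ranks agree because the Poincaré series in the Schubert basis coincide after killing the classes indexed by $w$ with $x_w\notin X_H(e_{\theta})^T$.

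The main obstacle is the rank argument: one must be careful that ``surjective plus equal rank implies isomorphism'' genuinely applies here, since $R$ is a polynomial ring (not a field) and a surjection $R^n\twoheadrightarrow R^n$ of free modules of equal finite rank is automatically an isomorphism (its determinant is a unit, or: the kernel is a projective, hence free, module of rank $0$), but one must ensure the gradings are handled so that each degree involves only finitely generated free $R$-modules. A clean way to finish is to observe that both $H_T^*(G/B)/J_H^T$ and $H_T^*(X_H(e_{\theta}))$ are free $R$-modules whose graded ranks are recorded by the same Poincaré polynomial (the sum of $t^{2\ell(w)}$ over $w$ with $x_w\in X_H(e_{\theta})^T$), and that a graded surjection of graded free $R$-modules with identical graded ranks, with $R$ non-negatively graded and connected, is an isomorphism by Nakayama applied after tensoring with $R/R^{>0}=\mathbb{Q}$ --- indeed, then it reduces precisely to the ordinary-cohomology statement of Theorem~\ref{Ordinary Kernel}, which we have already proved. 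I would phrase the final step in exactly this way, invoking Theorem~\ref{Ordinary Kernel} to conclude that the induced map on ordinary cohomology is an isomorphism, and then lifting that back up via equivariant formality.

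\begin{proof}
We first claim that $i_T^*(\sigma_T(w))=0$ for every $w\in W$ with $x_w\notin X_H(e_{\theta})^T$. As in the proof of Theorem~\ref{Ordinary Kernel}, we have $X_{-}(w)\cap X_H(e_{\theta})=\emptyset$; since $\sigma_T(w)$ is the equivariant class of $X_{-}(w)$, restricting it to the disjoint subvariety $X_H(e_{\theta})$ yields $0$. Therefore $J_H^T\subseteq\aker(i_T^*)$, and by Proposition~\ref{equiv surjectivity} the map $i_T^*$ descends to a surjective graded $H_T^*(\text{pt})$-algebra homomorphism
\begin{equation}\label{Equation: equivariant quotient map}
\overline{i_T^*}:H_T^*(G/B)/J_H^T\longrightarrow H_T^*(X_H(e_{\theta})).
\end{equation}

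To see that \eqref{Equation: equivariant quotient map} is injective, write $R:=H_T^*(\text{pt})$ and recall that $\{\sigma_T(w):w\in W\}$ is an $R$-module basis of $H_T^*(G/B)$. Hence $H_T^*(G/B)/J_H^T$ is a free $R$-module with basis the images of $\sigma_T(w)$ for $x_w\in X_H(e_{\theta})^T$. Since $X_H(e_{\theta})$ is paved by Schubert cells (Proposition~\ref{B-invariance}), it is equivariantly formal and $H_T^*(X_H(e_{\theta}))$ is a free $R$-module whose basis is indexed by the same set of cells, namely by $\{w\in W:x_w\in X_H(e_{\theta})^T\}$. Both sides of \eqref{Equation: equivariant quotient map} are thus free $R$-modules, and tensoring \eqref{Equation: equivariant quotient map} with $R/R^{>0}=\mathbb{Q}$ over $R$ recovers exactly the map $H^*(G/B)/J_H\rightarrow H^*(X_H(e_{\theta}))$, which is an isomorphism by Theorem~\ref{Ordinary Kernel}. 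A surjection of free $R$-modules that becomes an isomorphism modulo $R^{>0}$ is an isomorphism (its matrix, in compatible bases, reduces modulo $R^{>0}$ to an invertible matrix over $\mathbb{Q}$, hence is invertible over $R$ since $R$ is a connected graded ring). Therefore \eqref{Equation: equivariant quotient map} is an isomorphism of graded $H_T^*(\text{pt})$-algebras.
\end{proof}
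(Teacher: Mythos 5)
Your proof is correct and follows essentially the same route as the paper: vanishing of $i_T^*(\sigma_T(w))$ via the empty intersection $X_{-}(w)\cap X_H(e_{\theta})=\emptyset$, surjectivity from Proposition~\ref{equiv surjectivity}, and a comparison of free $H_T^*(\text{pt})$-modules of rank $\vert X_H(e_{\theta})^T\vert$. The only (harmless) difference is the final step: the paper simply invokes that a surjection of free modules of equal finite rank is an isomorphism, whereas you reduce modulo $H_T^{>0}(\text{pt})$ and appeal to Theorem~\ref{Ordinary Kernel} plus a graded Nakayama argument, which is equally valid.
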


\begin{proof}
Having established \eqref{Empty Intersection} in the proof of Theorem \ref{Ordinary Kernel}, we see that $i_T^*(\sigma_T(w))=0$ for $x_w\not\in X_H(e_{\theta})^T$.  
Therefore, $i_T^*$ induces a surjective map
\begin{align}\label{equiv restriction map}
H_T^*(G/B)/J_H^T\rightarrow H_T^*(X_H(e_{\theta})).
\end{align}

Now, from the definition of $J_H^T$, it is clear that 
\begin{align*}
H_T^*(G/B)/J_H^T\cong\bigoplus_{x_w\in X_H(e_{\theta})^T}H_T^*(\text{pt})\sigma_T(w)
\end{align*}
as $H_T^*(\text{pt})$-modules. In particular, $H_T^*(G/B)/J_H^T$ is free of rank $\vert X_H(e_{\theta})^T\vert$.  
However, as $X_H(e_{\theta})$ is $T$-equivariantly formal, $H_T^*(X_H(e_{\theta}))$ is also free of rank $\vert X_H(e_{\theta})^T\vert$. It follows that \eqref{equiv restriction map} is actually an isomorphism.
\end{proof}

\bibliographystyle{acm} 
\bibliography{Nilpotent}
\end{document}